\definecolor{bgcolor}{rgb}{0.8,1,1}
\definecolor{bgcolor2}{rgb}{0.8,1,0.8}
\definecolor{niceblue}{rgb}{0.0,0.19,0.56}
\definecolor{PineGreen}{RGB}{0,110,51}
\definecolor{BrickRed}{RGB}{143,20,2}
\newcommand{\R}{\mathbb{R}}
\def\<#1,#2>{\left\langle #1,#2\right\rangle}
\newcolumntype{Y}{>{\centering\arraybackslash}X}
\newcommand{\algname}[1]{{\sf  #1}\xspace}
\newcommand{\cO}{{\cal O}}
\newcommand{\sign}{\mathrm{sign}}
\newcommand{\EE}{\mathbb{E}}
\def\clip{\texttt{clip}}
\newlength{\dhatheight}
\def\la{\langle}
\def\ra{\rangle}
\newtheorem{assumption}{Assumption}
\newtheorem{lemma}{Lemma}
\newtheorem{proposition}{Proposition}
\newtheorem{theorem}{Theorem}
\newtheorem{corollary
}{Corollary
}
\newtheorem{example}{Example}
\begin{document}

\title{Sign Operator for Coping with Heavy-Tailed Noise in Non-Convex Optimization: High Probability Bounds Under $(L_0, L_1)$-Smoothness}

\author{Nikita Kornilov\\
  MIPT, Skoltech\\
  \texttt{kornilov.nm@phystech.edu} 
  \And 
  Philip Zmushko \\
  Yandex, MIPT\\
  \And
  Andrei Semenov \\
  EPFL \\
  \And
  Mark Ikonnikov \\
  MIPT \\
  \And
  Alexander Gasnikov \\
  MIPT, Skoltech, Innopolis University \\
  \And
  Alexander Beznosikov \\
  MIPT
}

%\begin{icmlauthorlist}
%\icmlauthor{Nikita Kornilov}{MIPT,Skoltech}
%\icmlauthor{Philip Zmushko}{Yandex,MIPT}
%\icmlauthor{Andrei Semenov}{EPFL}
%\icmlauthor{Alexander Gasnikov}{MIPT,Skoltech,IU}
%\icmlauthor{Alexander Beznosikov}{MIPT}

%\icmlauthor{}{sch}
%\icmlauthor{}{sch}
%\end{icmlauthorlist}

%\icmlaffiliation{MIPT}{Moscow Institute of Physics and Technology}
%\icmlaffiliation{Skoltech}{Skolkovo Institute of Science and Technology}
%\icmlaffiliation{EPFL}{Machine Learning and Optimization Laboratory (MLO),
%EPFL, Lausanne, Switzerland}
%\icmlaffiliation{IU}{Innopolis University}
%\icmlaffiliation{Yandex}{Yandex}

%\icmlcorrespondingauthor{Nikita Kornilov}{kornilov.nm@phystech.edu}
%\icmlcorrespondingauthor{Firstname2 Lastname2}{first2.last2@www.uk}

%\vskip 0.3in 
%\printAffiliationsAndNotice{}

\maketitle

%For the widely used clipping technique, effectiveness heavily depends on the careful tuning of clipping levels throughout training.
\begin{abstract}
%In recent years, non-convex AI optimization problems are more often described by generalized $(L_0, L_1)$-smoothness rather than standard smoothness. Severely corrupted data emerging in these problems has increased the demand for methods capable of handling heavy-tailed noises, i.e., noises with bounded $\kappa$-th moment. In this paper, we demonstrate that sign operator, without complex hyperparameters, can cope with this setup effectively. Using already exisiting sign-based method, we prove the first-known high probability convergence bounds under $(L_0, L_1)$-smoothness and heavy-tailed noises. These methods have accelerated initial convergence phase, after which comes a slower one as if under standard smoothness. In particular, SignSGD with batching starts with sample complexity $\tilde{O}\left(\varepsilon^{-\frac{2\kappa - 1}{\kappa - 1}}\right)$ and slows down to optimal $\tilde{O}\left(\varepsilon^{-\frac{3\kappa - 2}{\kappa - 1}}\right), \kappa \in (1,2]$. Under assumption of symmetric noises, SignSGD with Majority Voting can robustlly work on the whole range of $\kappa \in (0,2]$ with complexities $\tilde{O}(\varepsilon^{-3}/\kappa^2)$ and $\tilde{O}(\varepsilon^{-4}/\kappa^2).$ In addition, we obtain results for parameter-agnostic optimization, momentum method and Polyak-Lojasiewicz functions.
In recent years, non-convex optimization problems are more often described by generalized $(L_0, L_1)$-smoothness assumption rather than standard one. Meanwhile, severely corrupted data used in these problems has increased the demand for methods capable of handling heavy-tailed noises, i.e., noises with bounded $\kappa$-th moment. Motivated by these real-world trends and challenges, we explore sign-based methods in this setup and demonstrate their effectiveness in comparison with other popular solutions like clipping or normalization. In theory, we prove the first-known high probability convergence bounds under $(L_0, L_1)$-smoothness and heavy-tailed noises with mild parameter dependencies. In the case of standard smoothness, these bounds are novel for sign-based methods as well. In particular, \algname{SignSGD} with batching achieves sample complexity $\tilde{O}\left(\left(\frac{\Delta L_0d}{\varepsilon^2} + \frac{\Delta L_1d^\frac{3}{2}}{\varepsilon}\right)\left[1 +  \left(\frac{\sigma}{\varepsilon}\right)^\frac{\kappa}{\kappa-1}\right]\right), \kappa \in (1,2]$. Under the assumption of symmetric noises, \algname{SignSGD} with Majority Voting can robustly work on the whole range of $\kappa \in (0,2]$ with complexity $\tilde{O}\left(\left(\frac{\Delta L_0d}{\varepsilon^2} + \frac{\Delta L_1d^\frac{3}{2}}{\varepsilon}\right)\left[\frac{1}{\kappa^2} +  \frac{\sigma^2}{\varepsilon^2}\right]\right)$. We also obtain results for parameter-agnostic setups, Polyak-Lojasiewicz functions and momentum-based methods (in expectation). Our theoretical findings are supported by the superior performance of sign-based methods in training Large Language Models compared to clipping and normalization.

%In experiments, Furthermore, we explore the application of the sign operator in zeroth-order optimization with an oracle that can only compare function values at two different points. We propose a novel method, \algname{MajorityVote-CompsSGD}, and provide the first-known high-probability bound $\tilde{O}(\varepsilon^{-6})$ for the number of comparisons under symmetric noise assumption. 
%Our theoretical findings are supported by the superior performance of sign-based methods in training Large Language Models.
\end{abstract}
\section{Introduction}
\subsection{Problem statement.} 
Consider a stochastic optimization problem of a smooth non-convex function $f:\R^d \to \R$:
\begin{eqnarray}
    \min\limits_{x \in \R^d} f(x) := \EE_{\xi} [f(x, \xi)],\label{eq: min problem}
\end{eqnarray}
where the random variable $\xi$ can only be sampled from an unknown distribution. The main goal is to find a point with the smallest gradient norm. To achieve this, we are able to sample an unbiased estimate $\nabla f (x, \xi) \in \R^d$. For example, in machine learning, $f(x, \xi)$ can be interpreted as a loss function on a sample $\xi$ \cite{shalev2014understanding}. The backbone of all popular stochastic first-order methods for solving \eqref{eq: min problem} is Stochastic Gradient Descent (\algname{SGD}) \cite{robbins1951stochastic}:
\begin{equation}
    x^{k+1} = x^k - \gamma_k \cdot  g^k, \quad g^k := \nabla f (x^k, \xi^k). \notag \label{eq: sgd intro}
\end{equation} Huge success of these methods in the rapidly developing neural networks field \cite{bottou2012stochastic, kingma2014adam} has sparked numerous works studying their convergence under various assumptions on noise corrupting true gradients. For \algname{SGD}, the optimal sample complexity bound $O(\varepsilon^{-4})$ in expectation \cite{arjevani2023lower} is obtained for sub-Gaussian noise \cite{nemirovski2009robust} and for noise with bounded variance (BV) \cite{ghadimi2013stochastic}. These results are derived under classic assumptions. However, motivated by real-world complex Machine Learning applications \cite{zhang2020gradient}, modern theoretical papers focus on relaxed assumptions and settings. Below, we give three important stories that are relevant to this paper.

\textbf{$(L_0, L_1)$-smoothness.} Usually, for the objective function $f$, standard $L_0$-smoothness is assumed, i.e., $\|\nabla f(x) - \nabla f(y)\|_2 \le L_0\|x - y\|_2, \forall x, y \in \R^d$. However, a new generalized $(L_0, L_1)$-smoothness assumption was recently proposed and motivated for Large Language Models (LLM) in \cite{zhang2020gradient}. This assumption describes objective functions with a linearly growing Hessian norm: $\|\nabla^2f(x)\|_2 \leq L_0 + L_1 \|\nabla f(x)\|_2, \forall x \in \R^d$. In ongoing research, other variants of this assumption were introduced: for only once differentiable functions \cite{chen2023generalized}, for symmetrically and asymmetrically growing powers of norms  \cite{chen2023generalized}, and for sub-quadratic polynomially growing norms \cite{li2023convex}. Generalized smoothness applications can be found not only in LLM training \cite{zhang2020gradient, liu2023preGenSmooth}, but also in distributionally robust optimization \cite{levy2020largeDRO, jin2021nonDRO}, multitask learning \cite{zhang2024mgda}, federated learning \cite{liu2022communication}, and bilevel optimization \cite{haobilevel, gongnearly}. The convergence of the most popular optimization algorithms \algname{Adam} \cite{kingma2014adam} and  \algname{SGD} was explored under various noise and generalized smoothness assumptions in works \cite{li2023convergenceadam, zhang2024convergenceadam, wang2024provableadam, wang2024convergenceadam} and \cite{li2023convex}, respectively. 

\textbf{High probability bounds.} Due to the expensive training of large deep learning models \cite{davis2021low}, \textit{high probability (HP)} bounds have gained even more attention than bounds in expectation describing the behavior of stochastic methods over several runs. HP bounds provide convergence guarantees that hold true with probability at least $1 - \delta, \delta \in (0,1).$ The bound in expectation can be reduced to the HP bound using Markov's inequality; however, it leads to a dominant $\nicefrac{1}{\delta}$ factor. Meanwhile, much milder $\log\nicefrac{1}{\delta}$ factors can be achieved. For \algname{SGD}, HP bound $O(\varepsilon^{-4}\log\nicefrac{1}{\delta})$ under sub-Gaussian noise is obtained in \cite{li2020high}. However, already under BV noise, vanilla \algname{SGD} has $\nicefrac{1}{\sqrt{\delta}}$ dependency under standard \cite{sadiev2023high} and $(L_0, L_1)-$smoothness \cite{li2023convex}.

\textbf{Heavy-tailed noise.} Moreover, it is shown that the BV assumption cannot describe noises in loss functions in modern deep learning problems. In Transformer models, stochasticity tends to have a rather \textit{heavy-tailed (HT)} distribution \cite{simsekli2019tail, zhang2020adaptivegood, gurbuzbalaban2021heavy}. This means that the noise has bounded $\kappa$-th moment for some $\kappa \in (1,2]$, that is, $\EE_\xi[\| \nabla f (x, \xi) - \nabla f(x)\|_2^\kappa] \leq \sigma^\kappa$.  The desire to obtain  better $\delta$-dependency in HP bounds and to consider HT noise motivated the development of more robust modifications of \algname{SGD}, e.g. \algname{SGD} with clipping or normalization of the input gradient estimates. In this work, we show that applying a simple sign operator to the gradient estimates is an effective and comparable solution to cope with heavy-tailed noise as well. 
 \vspace{-5pt}
\subsection{Related works}
 \vspace{-5pt}
\paragraph{Clipping.} The idea of clipping the norm of the gradient estimate to reduce heavy noise demonstrates significant empirical results \cite{pascanu2013difficulty, goodfellow2016deep} and helps achieve $\log \nicefrac{1}{\delta}$ dependency under BV noise and standard smoothness \cite{nazin2019algorithms, gorbunov2020stochastic}. The clipping operator is defined as $\clip(g^k, \lambda_k) := \min\{1, \nicefrac{\lambda_k}{\|g^k\|_2}\} \cdot g^k$ and \algname{SGD} with clipping is called \algname{ClipSGD}. Clipping can also be applied to convex optimization, variational inequalities \cite{sadiev2023high}, non-smooth optimization \cite{zhang2020adaptivegood}, zeroth-order optimization \cite{kornilov2024accelerated}, robust aggregation \cite{karimireddy2021learning}, distributed optimization \cite{liu2022communication, qin2025high} and ensuring differential privacy \cite{andrew2021differentially}. %The first modification of \algname{SGD} which helps improve convergence and obtain high probability bounds under BV and heavy-tailed noise is \algname{ClipSGD} \cite{sadiev2023high, nguyen2023high, nguyen2023improved}. 

\textit{For standard smoothness}, let us list the latest results on the HP convergence of  \algname{ClipSGD} under HT noise. First, for non-convex functions, the authors of \cite{zhang2020adaptivegood} proved lower bounds $O(\varepsilon^{-\frac{3\kappa - 2}{\kappa - 1}})$ for sample complexity in expectation. As shown in \cite{sadiev2023high, nguyen2023improved}, \algname{ClipSGD} with proper clipping levels, stepsizes, and fixed horizon achieves the bound $\tilde{O}(\varepsilon^{-\frac{3\kappa - 2}{\kappa - 1}})$. \algname{ClipSGD} can also work with an infinite horizon and extra 
$\log\nicefrac{1}{\varepsilon}$ factors. In \cite{sadiev2023high}, the authors apply \algname{ClipSGD} to Polyak-Lojasiewicz functions and obtain faster convergence.  In a number of works \cite{chen2020understanding, puchkin2024breaking}, the authors work with symmetric HT noise to eliminate the dependency on $\kappa$, expand the range of feasible $\kappa$ for $\kappa \in (0,1]$  and break the actual lower bounds from \cite{zhang2020adaptivegood}. In \cite{chen2020understanding}, \algname{ClipSGD} attains the optimal rate $O(\varepsilon^{-4})$ without any requirements on $\kappa$. In \cite{puchkin2024breaking},  the authors use the coordinate-wise median operator  with clipping and prove HP rates for convex functions as if the noise is BV for $\kappa > 0$. %For close-to-symmetric noises, the authors use median of means and obtain the same results. 
%Similar suboptimal bound  $\tilde{\Omega} \left( \varepsilon^{-\frac{\kappa}{\kappa - 1} }\right)$  for strongly functions is attained as well. 

For \textit{$(L_0, L_1)-$smoothness}, only BV noise is considered: \algname{ClipSGD} \cite{zhang2020gradient, koloskova2023revisiting, reisizadeh2025variance} achieve the complexity bound $O(\varepsilon^{-4})$ in expectation. \algname{ClipSGD} with momentum \algname{M-ClipSGD}\cite{zhang2020improved} achieves the same bound when the noise in the gradient estimate is bounded. The difference is that \algname{ClipSGD} necessarily  requires large batchsizes, and \algname{M-ClipSGD} allows for any size.

Despite the effectiveness of clipping, it requires careful tuning of clipping levels, which optimal values depend on the iteration and the characteristics of the optimization problem \citep[Theorem. $3.1$]{sadiev2023high}.

\paragraph{Normalization.}
A natural simplification of clipping with a profound level schedule is the permanent normalization of the gradient estimate, i.e., $\text{norm}(g^k) := \nicefrac{g^k}{\|g^k\|_2}.$ \algname{SGD} with normalization is called \algname{NSGD} \cite{hazan2015beyond} and \algname{NSGD} with momentum is called \algname{M-NSGD}\cite{cutkosky2020momentum}. Early work devoted to normalization focused on BV noise and bounds in expectation. For standard smoothness, $O(\varepsilon^{-4})$ bounds are derived for \algname{M-NSGD} \cite{jin2021nonDRO, cutkosky2020momentum, yang2024two} and \algname{NSGD} \cite{zhao2021convergence}. For $(L_0, L_1)-$smoothness, \algname{M-NSGD} achieves a bound $\tilde{O}(\varepsilon^{-4})$ with optimal parameters and arbitrary ones (with exponential dependency on $L_1$) over an infinite horizon. %The works \cite{chen2023generalized, yang2024independently} divide gradients estimate by adjustable power of its norm and obtain $O(\varepsilon^{-4})$ bound without large batchsizes or momentum and under relaxed generalized smoothness. The paper \cite{chen2023generalized} proves it in a deterministic case and \cite{yang2024independently} uses almost surely assumption on noise. In works \cite{chen2023generalized, reisizadeh2025variance}, variance-reduced algorithm SPIDER\cite{fang2018spider} with normalization or clipping converges with compplexity $O(\varepsilon^{-3})$ for sum-type $(L_0, L_1)-$smoothn functions.

Next, we discuss normalization-based works considering HT noises and HP bounds, but only for standard smoothness.  In \cite{liu2023breaking, cutkosky2021high}, normalization is combined with clipping which helped to cope with HT noise and obtain suboptimal in $\varepsilon$ HP bounds $\tilde{O}(\varepsilon^{-\frac{3\kappa - 2}{\kappa - 1}})$. The HP convergence of vanilla \algname{NSGD} under HT noise is proved in \cite{hubler2024gradient}. The authors show that its complexity is $O(\varepsilon^{-\frac{3\kappa - 2}{\kappa - 1}}\log\nicefrac{1}{\delta})$ for optimal parameters and $O(\varepsilon^{-\frac{2\kappa}{\kappa - 1}}\log\nicefrac{1}{\delta})$  for parameter-agnostic tuning.
The same complexities hold for \algname{M-NSGD}, but only in expectation. In experiments with BV noise, the authors observe super-linear dependency on $\log \frac{1}{\delta}$ for momentum method.
\paragraph{Sign operator.} There is one more promising modification of \algname{SGD} which behavior under heavy-tailed noise has not yet been studied. Originally proposed in \cite{bernstein2018signsgd} for distributed optimization, \algname{SignSGD} takes only a sign of each coordinate of gradient estimate $ \sign(g^k).$ There is one peculiarity in bounds for sign-based methods: they are proved w.r.t. the $\ell_1$-norm instead of smaller $\ell_2$-norm. As a consequence, additional $d$ dependent factors appear.  

For \textit{standard smoothness}, \algname{SignSGD} achieves  sample complexity $O(d^2\varepsilon^{-4})$ in expectation under BV noise \cite{bernstein2018signsgd}. Similar to \algname{ClipSGD} and \algname{NSGD}, \algname{SignSGD} requires aggressive batching, which can be substituted by \algname{SignSGD} with momentum (\algname{M-SignSGD}) with bound $O(d^4\varepsilon^{-4})$\cite{sun2023momentum}. The alternative solution is to add error feedback mechanism that additionally fixes the biased nature of the sign operator and allows using convex functions \cite{seide20141, karimireddy2019error}.

For \textit{$(L_0, L_1)-$smoothness}, the authors of \cite{crawshaw2022robustness} propose generalized \algname{SignSGD} with Adam-like structure and prove bound $O(\varepsilon^{-4}\log(d/ \varepsilon))$ for \algname{M-SignSGD} under almost surely bounded noise. For the same bounded noise, in \cite{crawshaw2025complexitylowerboundsadaptive}, the authors study the behavior of various adaptive gradient algorithms and derive lower bounds for them with explicit parameter dependencies.

%In \cite{jiang2024efficient}, Sign-based Stochastic Variance Reduction algorithm achieves $O(d^\frac32\varepsilon^{-3})$ bound for sum-type functions.
%The main motivation for the original \algname{SignSGD} is communication effectiveness and empirical robustness in distributed optimization \cite{bernstein2018majorityvote}, since sending sign vector costs $O(d)$ operations. In theory, the $O(d^2\varepsilon^{-4})$ effectiveness is proved only under additional assumptions on noise, e.g., symmetry and unimodality. Other expansions of \algname{SignSGD} are the following: \cite{safaryan2021stochastic} proposes an updated theory for a wider class of noises in the distributed setup, \cite{liu2019signsgd} generalizes \algname{SignSGD} to zeroth-order oracle, \cite{jin2020stochastic} studies federated learning with compression, \cite{chzhen2023signsvrg, jiang2024efficient} explore variance reduction.

For all previously mentioned works, the results are obtained under BV noise. Bounds for HT noise are obtained only under symmetry assumption and standard smoothness. Works \cite{jakovetic2023nonlinear, armacki2023high, armacki2024large} analyze online non-linear \algname{SGD} without batching for convex and non-convex functions. It includes a wide range of non-linear transformations of gradient estimates such as \textit{clipping, normalization, and sign operator.} The authors of \cite{armacki2024large} propose a unified theoretical framework and prove bounds which are arbitrarily close to $O(\varepsilon^{-4})$ for all $\kappa > 0$. 
In works \cite{compagnoni2024adaptive, compagnoni2025unbiased}, the authors derive continuous SDE with Student's noise describing \algname{SignSGD} dynamics and obtain the $O(\varepsilon^{-4})$ HP bound from it.

% \vspace{-3px}
\subsection{Contributions}
% \vspace{-5px}
\paragraph{Theory.} Using sign-based methods, \textbf{we prove the first-known high probability bounds for non-convex $(L_0,L_1)$-smooth optimization under heavy-tailed noise}. These bounds are valid for all possible problem parameters, have mild dependencies on them, and match the optimal bounds in the case of standard smoothness. Moreover, the \textbf{HP results for sign-based methods in case of standard $L_0$-smoothness are novel as well}. For momentum-methods, our in expectation bounds are the first to consider together heavy-tailed noise and $(L_0, L_1)$-smoothness.  For all methods, we observe two-stage convergence: in the beginning when $\varepsilon \geq \nicefrac{8L_0}{\sqrt{d}L_1}$ methods work in the accelerated regime, and after passing the threshold, convergence speed drops to rates as if smoothness is standard. In addition, we consider special cases of Polyak-Lojasiewicz functions, symmetric noises, and parameter-agnostic settings. The summarized results and comparisons with related works are presented in Table \ref{tab: results summary}.
\paragraph{Experiments.} To validate our findings in real-world scenarios with heavy-tailed noise and generalized smoothness, in~\Cref{sec:experiments} we evaluate the sign-based methods on Transformer models, specifically on pre-training LLaMA\citep{llama} family models of sizes up to 1.3B on the C4 dataset~\citep{c4} and the Switch Transformer~\citep{fedus2022switch} Mixture of Experts (MoE) model on the FineWeb dataset~\citep{penedo2406fineweb}. 
Results demonstrate the effectiveness of sign-based methods compared to other commonly considered techniques to cope with heavy-tailed noise, namely, clipping and normalization. 
Surprisingly, our results also show that \algname{M-SignSGD} demonstrates competitive performance and slight improvements compared to \algname{AdamW}, which is the de facto optimizer for language model training.
%We use sign-based methods and demonstrate that they are more effective than clipping or normalization in this setup.

% Please add the following required packages to your document preamble:
% \usepackage{multirow} O\left(\frac{L_0^\delta d }{\varepsilon^2}\left[\frac{1}{\kappa^2} +  \left(\frac{\|\Vec{\sigma}\|_1}{\varepsilon}\right)^2\right]\right)
\begin{table}[]
\tiny
\caption{Convergence guarantees for non-convex optimization.  The metrics are Avr. $\ell_1$ : $\frac{1}{T} \sum_{k=1}^{T}  \|\nabla f(x^k)\|_1 \leq \varepsilon$, Avr. $\ell_2$: $\frac{1}{T} \sum_{k=1}^{T}  \|\nabla f(x^k)\|_2 \leq \varepsilon$, Avr. $\ell_2^2$: $\frac{1}{T} \sum_{k=1}^{T}  \|\nabla f(x^k)\|^2_2 \leq \varepsilon^2$, Func. acc. :
$f(x^T) - f(x^*) \leq \varepsilon.$ HP stands for bounds with probability at least $1- \delta$, $\EE$ stands for in expectation bounds. %Noise types are Bounded (almost surely $\|\nabla f(x,\xi)\|_2 \leq +\infty$),  $\kappa$ from As. \ref{as: pBCM} and Affine $\kappa = 2$ for $\EE_\xi [\|\nabla f (x, \xi) - \nabla f(x)\|_2^2] \leq \sigma_1^2 +\sigma_1^2  + \|\nabla f(x)\|_2^2 $.}
}\label{tab: results summary}
\begin{tabular}{|c|c|c|c|c|}
\hline
\multicolumn{1}{|c|}{\textbf{Method}}                                       & \multicolumn{1}{c|}{\textbf{Complexity Bound}} & \multicolumn{1}{c|}{\textbf{Smoothness}} & \multicolumn{1}{c|}{\textbf{Noise type}} & \multicolumn{1}{c|}{\textbf{Metric}} \\ \hline \hline
%\algname{M-SignSGD} \cite{crawshaw2022robustness}                                   &               $O\left(\frac{L_0d \log(\nicefrac{d}{\varepsilon \delta})\|\Vec{\sigma}\|_2^2}{\varepsilon^4}\right)$                       & $(L_0, L_1)$                        & Bounded                    & $\EE$ Avr. $\ell_1$  \\ \hline

\algname{NSGD} \cite{hubler2024gradient}                                   &            $O\left(\frac{\Delta L_0}{\varepsilon^2}\left[1 +  \left(\frac{\|\Vec{\sigma}\|_2}{\varepsilon}\right)^\frac{\kappa}{\kappa-1}\right]\right)$                         & \color{BrickRed} $L_0 $                        & \color{ForestGreen} $\kappa \in (1,2]$                    & \color{ForestGreen} HP Avr. $\ell_2$  \\ \hline
\algname{ClipSGD} \cite{nguyen2023improved}                                &  $O\left(\left(\frac{ ||\Vec{\sigma}||_2^\kappa \log \frac{1}{\delta}}{\sqrt{\Delta L_0}} \right)^\frac{3\kappa -2}{\kappa - 1} \left(\frac{ \sqrt{\Delta L_0}\log \frac{1}{\delta} }{\varepsilon^2}\right)^\frac{3\kappa - 2}{2\kappa - 2}\right)$  \label{eq: lol}                         & \color{BrickRed} $L_0$                               &           \color{ForestGreen} $\kappa \in (1,2]$                 &                    \color{ForestGreen} HP Avr. $\ell_2^2$       \\ \hline

%\algname{ClipSGD} \cite{reisizadeh2025variance}                                   &            $O\left(\frac{\Delta L_0\|\Vec{\sigma}\|^2_2}{\varepsilon^4}\right), \varepsilon < \frac{L_0}{20L_1}$                         & \color{ForestGreen} $(L_0, L_1)$                        & \color{BrickRed} $\kappa = 2$                    & \color{BrickRed} $\EE$ Avr. $\ell_2$ \\ \hline
\algname{ClipSGD} \cite{koloskova2023revisiting}                                   &            $O\left(\frac{\Delta L_1\|\Vec{\sigma}\|^4_2}{\varepsilon^5} + \frac{ \Delta(1+\|\Vec{\sigma}\|_2)(L_0   + L_1\varepsilon)}{\varepsilon^4} \right)$                         & \color{ForestGreen} $(L_0, L_1)$                        & \color{BrickRed} $\kappa = 2$                    & \color{BrickRed} $\EE$ Avr. $\ell_2$ \\ \hline
\algname{D-AdaGrad}  \cite{crawshaw2025complexitylowerboundsadaptive} & $\tilde{\Omega}\left(\frac{\Delta^2 L_0^2 \|\Vec{\sigma}\|^2_2}{\varepsilon^4} + \frac{\Delta^2 L_1^2 \|\Vec{\sigma}\|^2_2}{\varepsilon^2\log(1 + \Delta L_1^2/L_0)} \right), \varepsilon \leq \Delta L_1+ \|\Vec{\sigma}\|_2$  & \color{ForestGreen}  $(L_0,L_1)$ & \color{BrickRed}  Bounded & \color{ForestGreen} HP Avr. $\ell_2$  \\ \hline
\algname{AdaGrad-Norm}  \cite{wang2023convergenceadagrad} &
$\tilde{O}\left(\frac{\Delta^2 L_1^2 \|\Vec{\sigma}\|^2_2}{\varepsilon^4} + \frac{\Delta L_0 \|\Vec{\sigma}\|^2_2}{\varepsilon^4} + \frac{\|\Vec{\sigma}\|^6_2 }{\delta^4\varepsilon^4} \right)$   & \color{ForestGreen}  $(L_0,L_1)$ & \color{BrickRed}  $\underset{\text{affine}}{\kappa = 2}$ & \color{ForestGreen} HP Avr. $\ell_2$  \\ \hline
\algname{D-AdaGrad-Norm}  \cite{crawshaw2025complexitylowerboundsadaptive} &
$\tilde{\Omega}\left(\frac{\Delta^2 L_1^2 \|\Vec{\sigma}\|^2_2}{\varepsilon^4} + \frac{\Delta L_0 \|\Vec{\sigma}\|^2_2}{\varepsilon^4} + \frac{\Delta^2 L_1^2 }{\varepsilon^2} \right), \varepsilon \leq \sqrt{\Delta L_0}, \Delta L_1$   & \color{ForestGreen}  $(L_0,L_1)$ & \color{BrickRed}  Bounded & \color{ForestGreen} HP Avr. $\ell_2$  \\ \hline

 \pbox{20cm}{\algname{minibatch-SignSGD} \\ \textbf{(Theorem \ref{thm:minibatch SignSGD})}}   \cellcolor{bgcolor}
                                                                   &  \cellcolor{bgcolor}  $O\left(\left(\frac{\Delta L_0d}{\varepsilon^2} + \frac{\Delta L_1d^\frac32}{\varepsilon}\right)\left[1 +  \left(\frac{\|\Vec{\sigma}\|_1}{\varepsilon}\right)^\frac{\kappa}{\kappa-1}\right]\log \frac{1}{\delta}\right)$                      & \cellcolor{bgcolor} \color{ForestGreen}$(L_0,L_1)$                        & \cellcolor{bgcolor} \color{ForestGreen} $\kappa \in (1,2]$                    & \cellcolor{bgcolor} \color{ForestGreen} HP Avr. $\ell_1$

\\ \hline
\multicolumn{5}{|c|}{\textbf{Momentum methods}} \\ \hline
\algname{M-NSGD} \cite{hubler2024parameter}                                   &     $\tilde{O}\left(\frac{(\Delta L_1 

+ \|\Vec{\sigma}\|_2 + L_0/L_1)^4}{\varepsilon^4}\right)$                               & \color{ForestGreen} $(L_0, L_1)$                        & \color{BrickRed}$\kappa = 2$                    &\color{BrickRed} $\EE$ Avr. $\ell_2$ \\ \hline
\algname{M-NSGD} \cite{hubler2024gradient}                                   &     $O\left(\frac{\Delta L_0}{\varepsilon^2}\left[1 +  \left(\frac{\|\Vec{\sigma}\|_2}{\varepsilon}\right)^\frac{\kappa}{\kappa-1}\right]\right)$                               & \color{BrickRed} $L_0$                        & \color{ForestGreen}$\kappa \in (1,2]$                    &\color{BrickRed} $\EE$ Avr. $\ell_2$ \\ \hline
\cellcolor{bgcolor} \pbox{20cm}{\algname{M-SignSGD} \\ \textbf{(Theorem \ref{thm:momentum SignSGD})}}                                    &  \cellcolor{bgcolor}   $O\left(\left(\frac{\Delta L_0d}{\varepsilon^2} + \frac{\Delta L_1d}{\varepsilon}\right)\left[1 +  \left(\frac{\sqrt{d}\|\Vec{\sigma}\|_\kappa}{\varepsilon}\right)^\frac{\kappa}{\kappa-1}\right]\right)$                                         & \cellcolor{bgcolor}  \color{ForestGreen} $(L_0, L_1)$                        & \cellcolor{bgcolor} 
 \color{ForestGreen}$\kappa \in (1,2]$                    & \cellcolor{bgcolor}  \color{BrickRed} $\EE$ Avr. $\ell_1$\\ \hline

\multicolumn{5}{|c|}{\textbf{Polyak-Lojasiewicz  functions (Assumption~\ref{as: PL}, $\mu  > 0$)}} \\ \hline
 \multicolumn{1}{|c|}{\algname{ClipSGD}\cite{sadiev2023high}} & \multicolumn{1}{c|}{$\tilde{O}\left( \frac {  L_0}{\mu} \left[1 + \left(\frac{L_0\|\Vec{\sigma}\|^2_2}{\mu^2\varepsilon}\right)^\frac{\kappa}{2(\kappa-1)}\right]\right)$  }         & \color{BrickRed} $L_0$          & \color{ForestGreen} $\kappa \in (1,2]$   \   & \color{ForestGreen} HP Func. acc.  \color{ForestGreen}      \\ \hline
\pbox{20cm}{\algname{Restarted-SignSGD} \\ \textbf{(Theorem \ref{thm:restarted minibatch SignSGD})}} \cellcolor{bgcolor}                      &     $\tilde{O}\left( \left(\frac{L_0d }{\mu}  + \frac {  L_1 d^\frac32 \sqrt{\Delta}}{\sqrt{\mu}} \right)\left[1 + \left(\frac{\|\Vec{\sigma}\|^2_1}{\mu \varepsilon}\right)^\frac{\kappa}{2(\kappa-1)}\right]\right)$      \cellcolor{bgcolor}  & \color{ForestGreen} \cellcolor{bgcolor}  $(L_0,L_1) $                        & \color{ForestGreen} $\kappa \in (1,2]$   \cellcolor{bgcolor}                 & \color{ForestGreen} HP Func. acc.     \cellcolor{bgcolor}   \\ \hline

\multicolumn{5}{|c|}{\textbf{Symmetric and unimodal noise}} \\\hline
\pbox{20cm}{\algname{MajorityVote-SignSGD} \\ \cite{bernstein2018signsgd, bernstein2018majorityvote} }                    &    $O\left(\frac{\Delta L_0d }{\varepsilon^2}\left[1 +  \left(\frac{\|\Vec{\sigma}\|_1}{\varepsilon}\right)^2\right]\right) $                      &  \color{BrickRed} $L_0$                        &  \color{BrickRed} $\underset{\text{symmetric}}{\kappa  = 2}$                     &\color{BrickRed} $\EE$ Avr. $\ell_1$\\ \hline

\cellcolor{bgcolor} \pbox{20cm}{\algname{MajorityVote-SignSGD} \\ \textbf{(Theorem \ref{thm:com-sign conv})} }                   &   \cellcolor{bgcolor} $O\left(\left(\frac{\Delta L_0 d}{\varepsilon^2} + \frac{\Delta L_1d^\frac32}{\varepsilon}\right)\left[\frac{1}{\kappa^2} +  \left(\frac{\|\Vec{\sigma}\|_1}{\varepsilon}\right)^2\right]\log \frac{1}{\delta}\right)$                      & \cellcolor{bgcolor} \color{ForestGreen} $(L_0,L_1)$                        & \cellcolor{bgcolor} \color{ForestGreen} $\underset{\text{symmetric}}{\kappa \in (0,2]}$                     & \cellcolor{bgcolor} \color{ForestGreen} HP Avr. $\ell_1$ \\\hline  
\end{tabular}

\end{table}
%\item  and majority voting (Th.\ref{thm:com-sign conv})

%    \item For the zeroth-order oracle which can only compare function values at two points, we propose a novel and simple \algname{MajorityVote-CompSGD} method. We prove \textit{the first high probability bound $\tilde{O}((d/\varepsilon^2)^3)$ for the comparison number under symmetric heavy-tailed noise} (Th. \ref{thm: MajorVote-SignSTP}). For \textit{sum-type functions}, we prove the bound $\tilde{O} \left((\sqrt{d}/\varepsilon)^\frac{4\kappa - 2}{\kappa  - 1}\right)$ for the number of function calls under \textit{any heavy-tailed noise}. See Section \ref{sec: compsgd majority}.

    %\item We expand our high probability results to strongly convex functions using restarts technique (Th. \ref{thm: restarts}).    

    %\item To validate our findings in real-world scenarios with heavy-tailed noise, we evaluate the sign-based methods on Transformer models, demonstrating their effectiveness in both pre-training LLaMA 130M~\citep{llama} on C4~\citep{c4} dataset and zeroth-order  RoBERTa~\citep{roberta} fine-tuning on multiple NLP classification tasks. See Section \ref{sec:experiments}.

\subsection{Notations} The notation $\overline{1,n}$ represents the set of natural numbers $\{1, 2, \dots, n\}$.  We define $\ell_p$-norm $p \in [1,+\infty]$ as $(\|x\|_p)^p := \sum_{i=1}^d |x_i|^p, x \in \R^d$.  The notation $\la x, y \ra := \sum_{i=1}^{d} x_i y_i$ denotes the standard scalar product for $x,y \in \R^d$. The sign operator $\sign(\cdot)$ returns the sign of a scalar input and can also be applied element-wise to a vector. The notation $\widetilde\cO$ omits the logarithmic factors.

%\paragraph{Paper oraganization.}
% \vspace{-10mm}
\section{High probability bounds for sign-based methods under heavy-tailed noise and $(L_0, L_1)$-smoothness}\label{sec: sec_2}

In this section, we present our novel non-convex convergence guarantees with high probability for  \algname{SignSGD} with batching and majority voting.
We prove them for $(L_0, L_1)$-smooth objective functions with  heavy-tailed noise in gradient estimates. We provide  the best convergence rates and optimal parameters or rates under arbitrary parameters. All proofs are located in Appendix \ref{app: proofs}.

\subsection{Assumptions}
%We use standard assumptions from non-convex optimization and heavy-tailed noise.  
\begin{assumption}[Lower bound]\label{as: bounded}
    The objective function $f$ is lower bounded by $f^* > -\infty$.
\end{assumption}
We use the following formulation of $(L_0, L_1)$-smoothness from \cite{gorbunov2024methods}. 
\begin{assumption}[$(L_0,L_1)$-smoothness]\label{as: smooth}
    The objective function $f$ is differentiable and $(L_0, L_1)$-smooth, i.e., for the non-negative constants $(L_0, L_1)$ and $ x, y \in \R^d$, it holds
    $$
\|\nabla f(x) - \nabla f(y)\| \leq (L_0 + L_1  \sup_{u \in \left[x, y \right]}\|\nabla f(u)\|) \|x - y\|. 
\quad $$
\end{assumption}
For examples of $(L_0, L_1)$-smooth functions and their properties, we refer the reader to Appendix \ref{sunbsec: gen smoothness}.
\begin{assumption}[Heavy-tailed noise in gradient estimates]\label{as: pBCM}
    The unbiased estimate $\nabla f (x, \xi)$  has bounded $\kappa$-th moment $\kappa \in (1,2]$ for each coordinate, i.e., $\forall x \in \R^d$: 
    \begin{eqnarray}
        \EE_\xi [\nabla f (x, \xi)] = \nabla f(x), \quad \EE_\xi [|\nabla f (x, \xi)_i - \nabla f(x)_i|^\kappa] \leq \sigma_i^\kappa, i \in \overline{1,d},
    \end{eqnarray}
    where $\Vec{\sigma} = [\sigma_1, \dots, \sigma_d]$ are non-negative.
    %If $\kappa = 2$, then the noise is called a bounded variance. 
\end{assumption}

\subsection{HP convergence properties of the backbone \algname{SignSGD} method} 
We begin our analysis with the simplest of sign-based methods, namely \algname{SignSGD} (Alg. \ref{alg: signSGD}) and prove a general lemma on its convergence with high probability.
\begin{algorithm}[ht!]
\caption{\algname{SignSGD} }
\label{alg: signSGD}   
\begin{algorithmic}[1]
\REQUIRE Starting point $x^1 \in \R^d$, number of iterations $T$, stepsizes  $\{\gamma_k\}_{k=1}^{T}$.

\FOR{$k=1,\ldots, T$}
\STATE Sample $\xi^k$ and compute estimate $x^{k+1} = x^k - \gamma_k \cdot \sign(\nabla f(x^k, \xi^k))$;
\ENDFOR
\ENSURE uniformly random point from $\{x^1, \dots, x^T\}$ . 
\end{algorithmic}
\end{algorithm}
\vspace{-2mm}
\begin{lemma}[\textbf{\algname{SignSGD} Convergence Lemma}] \label{lem: signsgd T update}
Consider lower-bounded $(L_0, L_1)$-smooth function $f$ (As. \ref{as: bounded}, \ref{as: smooth}) and HT gradient estimates $\Vec{\sigma}_k$ (As. \ref{as: pBCM}). Then Alg. \ref{alg: signSGD} after $T$ iterations with non-increasing stepsizes $\gamma_k \leq 1/ (48L_1d^\frac32\log\frac1\delta)$ achieves with probability at least $1 - \delta$:
\begin{equation}
 \sum\limits_{k=1}^T \frac{\gamma_k}{16}\|\nabla f (x^k)\|_1 \leq \Delta + L_0d\sum_{k=1}^T\gamma_k^2 + 2\sum_{k=1}^T\gamma_k \|\Vec{\sigma}_k\|_1 
       + 6d(\gamma_1 \|\nabla f (x^1)\|_1  + 2C_TL_0) \log\frac{1}{\delta}, \label{eq: signsgd convergence lemma}
\end{equation}
where $C_T := \max\limits_{k \in \overline{1,T}} \gamma_k \cdot  \sum\limits_{\tau=1}^{k-1}\gamma_\tau$ and $\Delta = f(x^1) - f^*$.
\end{lemma}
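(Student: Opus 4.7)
The plan is to combine an $(L_0, L_1)$-smoothness descent inequality with a coordinate-wise decomposition of the sign error, and then concentrate the resulting martingale part via a Freedman-type inequality. The main obstacle is a circular dependency: the concentration correction involves $\max_k \gamma_k \|\nabla f(x^k)\|_1$, precisely the quantity we are trying to bound, so it must be controlled a priori through smoothness.

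First, since $\|x^{k+1}-x^k\|_2 = \gamma_k \sqrt{d}$ for the sign update, the stepsize condition $\gamma_k L_1 d^{3/2}\log\tfrac{1}{\delta} \le \tfrac{1}{48}$ makes $\gamma_k L_1 \sqrt d$ small, so a standard $(L_0, L_1)$-descent lemma (in which the sup of $\|\nabla f\|$ along the segment $[x^k, x^{k+1}]$ is dominated by $\|\nabla f(x^k)\|_2$ up to a universal constant) gives
\begin{equation}
f(x^{k+1}) \le f(x^k) - \gamma_k \la \nabla f(x^k),\sign(g^k)\ra + c\bigl(L_0 + L_1\|\nabla f(x^k)\|_2\bigr)\gamma_k^2 d. \nonumber
\end{equation}
I would then apply the sign identity
\begin{equation}
-\la \nabla f(x^k),\sign(g^k)\ra = -\|\nabla f(x^k)\|_1 + 2\sum_i |\nabla f(x^k)_i|\mathbf{1}\{\sign(g^k_i)\neq\sign(\nabla f(x^k)_i)\} \nonumber
\end{equation}
together with the almost-sure coordinate bound $|\nabla f(x^k)_i|\mathbf{1}\{\sign(g^k_i)\neq\sign(\nabla f(x^k)_i)\} \le |g^k_i - \nabla f(x^k)_i|$; Jensen's inequality combined with Assumption~\ref{as: pBCM} ($\kappa > 1$) yields $\EE_k[\mathrm{error}_k] \le 2\|\Vec\sigma_k\|_1$, producing the $2\sum_k \gamma_k\|\Vec\sigma_k\|_1$ term after summation.

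Second, the centered part $M_k := \mathrm{error}_k - \EE_k[\mathrm{error}_k]$ is a martingale difference with $|M_k|\le 2\|\nabla f(x^k)\|_1$ almost surely. Freedman's inequality applied to $\sum_k \gamma_k M_k$ introduces $M := \max_k 2\gamma_k\|\nabla f(x^k)\|_1$, which I control using only the $L_0$ part of the smoothness: telescoping along $\|x^k-x^1\|_2 \le \sqrt d\sum_{s<k}\gamma_s$ and converting $\ell_2$ to $\ell_1$ yields $\|\nabla f(x^k)\|_1 \le \|\nabla f(x^1)\|_1 + L_0 d \sum_{s<k}\gamma_s$ (the $L_1$ contribution is absorbed back using the stepsize condition and the non-increasing nature of $\gamma_k$). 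Hence $\gamma_k\|\nabla f(x^k)\|_1 \le \gamma_1\|\nabla f(x^1)\|_1 + L_0 d\,C_T$, and Freedman gives a deviation of order $d\bigl(\gamma_1\|\nabla f(x^1)\|_1 + L_0 C_T\bigr)\log\tfrac{1}{\delta}$, where the extra factor $d$ arises from the $\ell_1$-to-$\ell_2$ conversion (or equivalently from a coordinate-wise union bound on the $d$ individual sign-flip martingales).

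Finally, I would telescope the descent inequality from $k=1$ to $T$, use $f(x^{T+1})\ge f^*$ to extract $\Delta$, substitute the expected and high-probability noise bounds, and absorb the residual term $\tfrac{L_1 d}{2}\sum_k \gamma_k^2\|\nabla f(x^k)\|_2 \le \tfrac{L_1 d}{2}\sum_k \gamma_k^2\|\nabla f(x^k)\|_1$ into the LHS using $\gamma_k L_1 d^{3/2}\log\tfrac{1}{\delta}\le \tfrac{1}{48}$; the accumulated $\tfrac{1}{2}$-type absorptions are exactly what produce the $\tfrac{1}{16}$ factor on the LHS. The hardest step is the a priori control in paragraph three: the smoothness drift must enter only \emph{additively} as $L_0 d\,C_T$, because any \emph{multiplicative} factor on $\|\nabla f(x^k)\|_1$ inside the Freedman deviation would couple back to the LHS and prevent its absorption, destroying the bound.
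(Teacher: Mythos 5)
Your proposal is correct and follows essentially the same route as the paper's proof: an $(L_0,L_1)$-descent step, the coordinate-wise sign-error decomposition bounded in expectation by $2\|\Vec{\sigma}_k\|_1$ via Markov/Jensen, a martingale concentration step whose deviation is controlled by an a priori telescoping bound $\gamma_k\|\nabla f(x^k)\|_1 \lesssim \gamma_1\|\nabla f(x^1)\|_1 + L_0 d\, C_T + (\text{absorbable } L_1 \text{ drift})$, and final absorption of the residual terms into the left-hand side. The only cosmetic differences are that the paper invokes a sub-Gaussian measure-concentration lemma with a hand-picked $\lambda$ rather than Freedman's inequality, and bounds the flip probability directly by Markov's inequality rather than via your almost-sure domination; you also correctly identify the crux, namely that the $L_1$ part of the gradient-growth recursion must be re-absorbed into the LHS so that the concentration correction stays additive in $L_0 d\, C_T$.
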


The bound \eqref{eq: signsgd convergence lemma} resembles the convergence bound in expectation for \algname{SignSGD} for $\kappa=2$ \cite{bernstein2018signsgd}. The difference is the last term with linear dependence on $\log\frac{1}{\delta}$. Remarkably, $L_1$  constant affects only the upper bound for the largest possible stepsizes $\gamma_k \leq 1/ (48L_1d^\frac32\log\frac1\delta)$. In the case of small $L_0$, this is the only condition that prevents us from increasing the stepsize too much. We provide synthetic experiments verifying dependencies in bound \eqref{eq: signsgd convergence lemma} in Appendix \ref{sec: exps for theory}.

In order to upper bound an average accuracy norm  from \eqref{eq: signsgd convergence lemma} by $\varepsilon$, the noise $\|\Vec{\sigma}\|_1$ has not to exceed $\varepsilon$. The first way to lower the noise is to use batch averaging. 
\vspace{-2mm}
\subsection{New HP bounds for $(L_0,L_1)$-smoothness for \algname{SignSGD} with average batching}\label{sec:minibatch sign sgd}

\begin{algorithm}[ht!]
\caption{\algname{minibatch-SignSGD} }
\label{alg:minibatch-signSGD}   
\begin{algorithmic}[1]
\REQUIRE Starting point $x^1 \in \R^d$, number of iterations $T$, stepsizes  $\{\gamma_k\}_{k=1}^{T}$, batchsizes $\{B_k\}_{k=1}^{T}$.

\FOR{$k=1,\ldots, T$}
\STATE Sample $\{\xi^k_i\}_{i=1}^{B_k}$ and compute  $x^{k+1} = x^k - \gamma_k \cdot \sign(\sum_{i=1}^{B_k} \nicefrac{\nabla f(x^k, \xi^k_i)}{B_k})$;
\ENDFOR
\ENSURE uniformly random point from $\{x^1, \dots, x^{T}\}$ . 
\end{algorithmic}
\end{algorithm}

\begin{theorem}[\textbf{HP complexity for \algname{minibatch-SignSGD}}]\label{thm:minibatch SignSGD}
Consider lower-bounded $(L_0,L_1)$-smooth function $f$ (As. \ref{as: bounded}, \ref{as: smooth}) and HT gradient estimates (As. \ref{as: pBCM}). Then Alg. \ref{alg:minibatch-signSGD} requires the sample complexity $N$  to achieve $\frac{1}{T} \sum_{k=1}^{T}  \|\nabla f(x^k)\|_1 \leq \varepsilon$ with probability at least $1-\delta$ for:

\textbf{Optimal tuning:}  $T = O\left(\frac{\Delta L_1^\delta d^\frac{3}{2} }{\varepsilon}\right), \gamma_k \equiv \frac{1}{48 L_1^\delta d^\frac32} , B_k \equiv  \left(\frac{16\|\Vec{\sigma}\|_1}{\varepsilon}\right)^\frac{\kappa}{\kappa-1}$ for $\varepsilon \geq \frac{8L_0}{L_1\sqrt{d}}$ and $T = O\left(\frac{L_0^\delta d }{\varepsilon^2}\right), \gamma_k \equiv \sqrt{\frac{\Delta}{20 L_0^\delta dT}} , B_k \equiv  \left(\frac{16\|\Vec{\sigma}\|_1}{\varepsilon}\right)^\frac{\kappa}{\kappa-1} $ for $\varepsilon \leq \frac{8L_0}{L_1\sqrt{d}}$:
\begin{equation}
   N = O\left(\left(\frac{\Delta L_0 d }{\varepsilon^2} + \frac{\Delta L_1   d^\frac{3}{2}}{\varepsilon}\right)\left[1 +  \left(\frac{\|\Vec{\sigma}\|_1}{\varepsilon}\right)^\frac{\kappa}{\kappa-1}\right]\log \nicefrac{1}{\delta}\right), \label{eq: sign SGD optimal} 
\end{equation}
%\textbf{Arbitrary tuning $\varepsilon \geq \frac{8L_0}{L_1\sqrt{d}}$:}  $T, \gamma_k \equiv \gamma_0 \leq \frac{1}{48L_1^\delta d^\frac32 },  B_k \equiv \max \{1, B_0T^2\}$:
%%%\begin{equation}
%%    N = O\left(B_0\left(\frac{\Delta} {\varepsilon\gamma_0} \right)^3 + \frac{1}{B_0^2}\left(\frac{\|\Vec{\sigma}\|_1}{\varepsilon}\right)^\frac{3\kappa}{2(\kappa-1)}\right), \label{eq: sign SGD arb L1}
%\end{equation}
%%\textbf{Arbitrary tuning $\varepsilon \leq \frac{8L_0}{L_1\sqrt{d}}$:}  $T, \gamma_k \equiv \frac{\gamma_0}{\sqrt{T}}, B_k \equiv \max \{1, B_0T\}$:
%\begin{equation}
%    N = O\left(\frac{B_0(\nicefrac{\Delta}{\gamma_0} + L_0^\delta d \gamma_0)^4} {\varepsilon^4} + \frac{1}{B_0}\left(\frac{\|\Vec{\sigma}\|_1}{\varepsilon}\right)^\frac{2\kappa}{\kappa-1}\right), \label{eq: sign SGD arb}
%%\end{equation} 
\textbf{Arbitrary tuning:\footnote{These bounds are proved for a metric $\min_{k \in \overline{1,T}}\|\nabla f(x_k)\|_1  \leq \varepsilon$.}}  Until plateau $\gamma_k = \gamma_0 \leq \frac{1}{48L^\delta_1d^\frac32}, B_k = B_0k^2$, after $\gamma_k = \frac{\gamma_0}{\sqrt{k}}, B_k = B_0k$:
\begin{eqnarray}
    \varepsilon \geq \frac{8L_0}{L_1\sqrt{d}} &\Rightarrow& N = \tilde{O}\left( B_0\left(\frac{\Delta}{\gamma_0\varepsilon} 
 \right)^3 + \frac{1}{B_0^2}\left(\frac{\|\Vec{\sigma}\|_1}{\varepsilon}\right)^\frac{3\kappa}{2(\kappa - 1)}\right), \notag \\
    \varepsilon \ll \frac{8L_0}{L_1\sqrt{d}} &\Rightarrow& N =  \tilde{O}\left(\frac{B_0(L_0^\delta \gamma_0 d + \Delta/\gamma_0)^4}{\varepsilon^4}  +  \frac{1}{B_0}\left(\frac{\|\Vec{\sigma}\|_1}{\varepsilon}\right)^\frac{2\kappa}{\kappa - 1} \right), \label{eq: minibatch signSD arb tuining}
\end{eqnarray} 
where $\Delta = f(x^1) - f^*, L_0^\delta = L_0 \log(\nicefrac{1}{\delta}), L_1^\delta = L_1 \log(\nicefrac{1}{\delta}).$
\end{theorem}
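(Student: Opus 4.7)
The plan is to derive the theorem as a direct corollary of the Convergence Lemma (Lemma~\ref{lem: signsgd T update}) applied to the batched gradient estimate $\bar g^k = \tfrac{1}{B_k}\sum_{i=1}^{B_k} \nabla f(x^k, \xi_i^k)$. The probabilistic input I need is the coordinate-wise $\kappa$-th moment of the batch mean: since the centered summands are independent with bounded $\kappa$-th moment $\sigma_i^\kappa$, the von Bahr--Esseen inequality (or a direct Jensen-style computation valid for $\kappa \in (1,2]$) gives $\EE_\xi |\bar g^k_i - \nabla f(x^k)_i|^\kappa \leq 2\sigma_i^\kappa / B_k^{\kappa-1}$, so the effective batched noise vector $\Vec{\sigma}_k$ in Assumption~\ref{as: pBCM} satisfies $\|\Vec{\sigma}_k\|_1 \lesssim \|\Vec{\sigma}\|_1 / B_k^{(\kappa-1)/\kappa}$. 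This is the only modification to Lemma~\ref{lem: signsgd T update}: everything else in its statement is preserved because Alg.~\ref{alg:minibatch-signSGD} is Alg.~\ref{alg: signSGD} with $\nabla f(x^k,\xi^k)$ replaced by $\bar g^k$.

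For the \emph{optimal tuning}, I take constant $\gamma_k\equiv\gamma$ and $B_k\equiv B$, so $C_T = \gamma^2 T$. Dividing \eqref{eq: signsgd convergence lemma} by $\tfrac{\gamma T}{16}$ yields a bound of the form
\[
\frac{1}{T}\sum_{k=1}^T \|\nabla f(x^k)\|_1 \;\lesssim\; \frac{\Delta}{\gamma T} \;+\; L_0^\delta d\,\gamma \;+\; \frac{\|\Vec{\sigma}\|_1}{B^{(\kappa-1)/\kappa}} \;+\; \frac{d\log\tfrac{1}{\delta}\,\|\nabla f(x^1)\|_1}{T},
\]
where the initialization term is absorbed by a constant blow-up of $T$. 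Choosing $B = (16\|\Vec{\sigma}\|_1/\varepsilon)^{\kappa/(\kappa-1)}$ makes the noise term at most $\varepsilon$, and then $\gamma$ is set by whichever of the two constraints is tighter: the Lipschitz cap $\gamma\leq 1/(48 L_1^\delta d^{3/2})$ inherited from Lemma~\ref{lem: signsgd T update}, or the standard $L_0$-balance $\gamma = \sqrt{\Delta/(L_0^\delta d T)}$. These two prescriptions coincide precisely at $\varepsilon \asymp 8L_0/(L_1\sqrt{d})$, giving the dichotomy in \eqref{eq: sign SGD optimal}: above the threshold $\gamma$ is capped by $L_1$ and $T \asymp \Delta L_1^\delta d^{3/2}/\varepsilon$, below it $T \asymp \Delta L_0^\delta d/\varepsilon^2$. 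Multiplying $T\cdot B$ yields the stated sample complexity.

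For the \emph{arbitrary tuning} case, the schedule is split into two phases: a constant phase $\gamma_k=\gamma_0$, $B_k=B_0 k^2$ in which the noise decays polynomially in $k$, and, after a plateau, a second phase $\gamma_k=\gamma_0/\sqrt{k}$, $B_k=B_0 k$. In each phase I compute $\sum_{k=1}^T\gamma_k$, $\sum_{k=1}^T\gamma_k^2$, and $C_T=\max_k \gamma_k\sum_{\tau<k}\gamma_\tau$ by standard $p$-series estimates (in particular $\sum_{k=1}^T \gamma_0/\sqrt{k}\asymp \gamma_0\sqrt{T}$ and $C_T \asymp \gamma_0^2\log T$ in the decreasing phase), and I bound $\sum_k \gamma_k \|\Vec{\sigma}_k\|_1 \lesssim \|\Vec{\sigma}\|_1\sum_k \gamma_k/B_k^{(\kappa-1)/\kappa}$ by another $p$-series argument that converges thanks to the aggressive growth of $B_k$. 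Plugging all of this into \eqref{eq: signsgd convergence lemma}, dividing through, and solving for $T$ so that each of the ``smoothness'' and ``noise'' parts is at most $\varepsilon$ produces \eqref{eq: minibatch signSD arb tuining}, with the same $\varepsilon$-threshold $8L_0/(L_1\sqrt{d})$ arising from the same $L_1$-cap versus $L_0$-balance competition; the $\min_k$ metric is recovered from the weighted average by $\sum \gamma_k$ on the left-hand side of \eqref{eq: signsgd convergence lemma}.

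The main obstacle I anticipate is controlling the $C_T$-term, which couples the current stepsize with the whole past trajectory: in the decreasing-stepsize phase one must verify $C_T = \tilde O(\gamma_0^2)$ uniformly in $T$ so that its contribution to \eqref{eq: signsgd convergence lemma} does not grow faster than $\sum_k\gamma_k$. A secondary delicate point is combining the two phases into a single clean bound, which requires choosing the plateau length so that both phases contribute equally to $T$, and tracking the hidden polylogarithmic factors carefully enough to fit them inside the $\tilde O$ on the right-hand side of \eqref{eq: minibatch signSD arb tuining}.
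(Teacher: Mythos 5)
Your proposal is correct and follows essentially the same route as the paper's proof: it specializes the Convergence Lemma~\ref{lem: signsgd T update} with the batched noise level $\|\Vec{\sigma}\|_1/B_k^{(\kappa-1)/\kappa}$ (the paper's HT Batching Lemma~\ref{lem: batching p} is exactly your von Bahr--Esseen step), derives the same dichotomy from the $L_1$-cap versus the $L_0$-balanced stepsize at the threshold $\varepsilon \asymp 8L_0/(L_1\sqrt{d})$, and handles the arbitrary-tuning schedules by the same $p$-series computations of $\sum_k\gamma_k$, $\sum_k\gamma_k^2$, $C_T$, and $\sum_k\gamma_k/B_k^{(\kappa-1)/\kappa}$. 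Your anticipated obstacle about $C_T$ is benign: for $\gamma_k=\gamma_0/\sqrt{k}$ one gets $C_T=O(\gamma_0^2)$ uniformly in $T$, as the paper also records.
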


%\paragraph{Infinite horizon.} Instead of finite horizon $T$, one can run \algname{minibatch-SignSGD} in infinite mode. For example, the first $\frac{64\Delta L_1^\delta L_1 d^2}{L_0}$ steps, one can use \textbf{optimal}  $\gamma_k = \frac{1}{48L_1^\delta d^\frac32}, B_k = \max \left\{1, \left(16  k \right)^\frac{\kappa}{\kappa - 1}\right\}$ and after $\gamma_k = \sqrt{\frac{\Delta}{20 d L_0^\delta k}}, B_k = \max \left\{1, \left(16  k \right)^\frac{\kappa}{2(\kappa - 1)}\right\}$. For a new metric $\min\limits_{k = \overline{1,T}} \| \nabla f(x^k)\|_1 \leq \varepsilon$, the complexity results from Theorem \ref{thm:minibatch SignSGD} remain the same with extra $\log\frac{1}{\varepsilon}$ factors:
%%\begin{eqnarray}
%    \varepsilon \geq \frac{8L_0}{L_1\sqrt{d}} &\Rightarrow& N  = O\left(\left(\frac{(L_1^\delta d^\frac32 + \|\Vec{\sigma}\|_1)\ln 1/\varepsilon}{\varepsilon}\right)^\frac{2\kappa -1}{\kappa - 1}\right), \notag \\
%\varepsilon \ll \frac{8L_0}{L_1\sqrt{d}} &\Rightarrow& N  = O\left(\left(\frac{(L_0^\delta d + \|\Vec{\sigma}\|^2_1)\ln \varepsilon}{\varepsilon^2}\right)^\frac{3\kappa -2}{2(\kappa - 1)}\right).
%\end{eqnarray}
\vspace{-1mm}
Proof, optimal tuning for infinite horizon and arbitrary tuning for finite horizon are in Appendix \ref{subsec: minibatch signsgd proof}.

\subsubsection{Discussion}

\paragraph{Optimal tuning bounds.} From Theorem \ref{thm:minibatch SignSGD}, we can clearly distinguish two phases of algorithm convergence: fast initial phase with rate $\tilde{O}\left(\varepsilon^{-\nicefrac{2\kappa - 1}{\kappa - 1}}\right)$ before threshold $\varepsilon \geq \nicefrac{8L_0}{L_1\sqrt{d}}$ and slower one with rate $\tilde{O}\left(\varepsilon^{-\nicefrac{3\kappa - 2}{\kappa - 1}}\right)$ after. We provide synthetic experiments verifying two stage convergence and batching effectiveness in Appendix \ref{sec: exps for theory}.

In the case of $L_0 \approx 0$ (e.g. for logistic regression \cite{gorbunov2024methods} and deep neural networks \cite{zhang2020gradient}), \algname{minibatch-SignSGD} runs in the fast regime the whole time and can work with large constant stepsizes. Otherwise, under standard smoothness $L_1 = 0$, the bound \eqref{eq: sign SGD optimal} matches the lower in expectation bound $\Omega \left( \nicefrac{\Delta L_0}{\varepsilon^2} + \nicefrac{\Delta L_0}{\varepsilon^2}\left(\nicefrac{\|\Vec{\sigma}\|_2}{\varepsilon} \right)^{\frac{\kappa}{\kappa - 1} }\right)$ for first-order stochastic  optimization \cite{zhang2020adaptivegood}. We also wish to highlight the linear dependency of \eqref{eq: sign SGD optimal}  on $\Delta,  L_0, L_1, \log\nicefrac{1}{\delta}$ and the mild dependency on $\|\Vec{\sigma}\|_1$.

\paragraph{Extra $d$ factors.} In bound \eqref{eq: sign SGD optimal}, there are extra $d$ factors that are missing in bounds for $\ell_2$-norm (Table~\ref{tab: results summary}). Indeed, in sign-based methods, we usually work with larger $\ell_1$-norm instead of $\ell_2-$norm, i.e., $\|x\|_2 \leq \|x\|_1 \leq \sqrt{d}\|x\|_2, \forall x \in \R^d.$ Thus, in order to achieve $\varepsilon'$ accuracy in the $\ell_2$-norm, accuracy $\varepsilon$ in the $\ell_1$-norm has to be $\varepsilon \sim   \varepsilon'\cdot\sqrt{d}$, especially for dense gradient vectors. For dense corrupting noise, we similarly have   $\|\Vec{\sigma}\|_{1} \sim \sqrt{d}\|\Vec{\sigma}\|_{2}$. In \cite{bernstein2018signsgd}, the authors show that gradients and noise during DL model training via \algname{SignSGD} actually keep high density. Putting $\varepsilon \sim   \varepsilon'\cdot\sqrt{d}$ and $\|\Vec{\sigma}\|_{1} \sim \sqrt{d}\|\Vec{\sigma}\|_{2}$ into \eqref{eq: sign SGD optimal}, we see that the only remaining factor is $L_1d$.
 
\paragraph{Comparison under standard smoothness.} According to the HP analysis  of \algname{ClipSGD} from \cite{nguyen2023improved}, it achieves the rates from Table \ref{tab: results summary}. These rates have optimal $\tilde{O}\left(\varepsilon^{-\nicefrac{3\kappa - 2}{\kappa - 1}}\right)$ dependency on $\varepsilon$, however, dependencies on $\Delta, L_0, \Vec{\sigma}$ are much worse than ours. Moreover, \algname{ClipSGD} requires careful clipping level scheduling, and we are not aware of any works proposing arbitrary tuning for clipping methods. In \cite{hubler2024gradient}, the authors analyze HP convergence of \algname{minibatch-NSGD} and obtain the sample complexity w.r.t. to the $\ell_2$-norm (Table \ref{tab: results summary}), the only difference from \eqref{eq: sign SGD optimal} is the absence of $d$ factors. As explained before, this difference results from the distinct norms in the bounds. From a practical point of view, sign-based methods can be applied to distributed optimization (Appendix \ref{sec: distributed}) where normalization does not fit. Besides, one can use majority voting as a more powerful alternative to batching.  

\paragraph{Comparison under generalized smoothness.} Under BV noise, the results for \algname{CLipSGD} \cite{reisizadeh2025variance} in case $\varepsilon \leq \nicefrac{L_0}{L_1}$ match our bound \eqref{eq: sign SGD optimal}. However, as $L_0$ becomes smaller our bound has better dependencies on $\varepsilon$ and $\sigma$ than \cite{koloskova2023revisiting}. In comparison with adaptive methods \cite{wang2023convergenceadagrad, crawshaw2025complexitylowerboundsadaptive} from Table \ref{tab: results summary}, our results remain valid for all possible values of parameters $L_0, L_1, \|\Vec{\sigma}\|_2, \Delta \geq 0$ and accuracy $\varepsilon$. Moreover, our bound demonstrates a milder linear dependency on $\Delta, L_0, L_1$ and $\log$-dependency on probability $\delta$ instead of a polynomial dependency.

\paragraph{Arbitrary tuning bounds.} For our methods, we use dynamic arbitrary tuning. We start with the largest stepsize $\gamma_0$ for which the method converges and continue until it reaches the plateau $dL_0\gamma_0$ observed in the convergence bound \eqref{eq: signsgd convergence lemma}. After that, we begin to decrease the stepsizes as $1/\sqrt{k}$. Arbitrary tuning results in worse polynomial dependency on parameters $L_0,L_1, \Delta$ and slower (for $\kappa \neq 2$) non-optimal rates $\tilde{O}(\varepsilon^{-\frac{3\kappa}{2(\kappa - 1)}})$ and $\tilde{O}(\varepsilon^{-\frac{2\kappa}{\kappa - 1}})$  instead of $O(\varepsilon^{-\frac{2\kappa - 1}{\kappa - 1}})$ and $O(\varepsilon^{-\frac{3\kappa  - 2}{\kappa - 1}})$. 

In \cite{hubler2024parameter}, the authors apply simple arbitrary tuning to \algname{M-NSGD} and obtain in BV expectation bound $\tilde{O}((\Delta/\gamma_0 + \gamma_0 L_0)e^{(\gamma_0L_1)^2} + \|\Vec{\sigma}\|_2)^4/\varepsilon^4)$ without any restrictions on $\gamma_0$. If $\gamma_0 \leq 1/L_1$, their bounds match our HP rates \eqref{eq: minibatch signSD arb tuining} ($\kappa = 2$) during the second convergence phase, however, only we consider the first faster phase with $ \tilde{O}\left(\left(\nicefrac{\Delta }{\gamma_0\varepsilon} 
 \right)^3 + \left({\|\Vec{\sigma}\|_1}/{\varepsilon}\right)^3\right)$ rates. 

%%Under standard smoothness and HT noise, our arbitrary tuning (without the first phase and extra $d$ factors) achieves the same rates as \algname{M-NSGD} \cite{hubler2024gradient}.

\paragraph{Practical heuristics justification.} Using arbitrary tuning bounds \eqref{eq: minibatch signSD arb tuining}, we can explain why popular practical heuristics for training neural networks such as grid search of hyperparmaters and decreasing stepsizes successfully work in real-world problems. %In experiments, we use these heuristics as well (see Section \ref{sec:experiments}). 

First, one can grid search hyperparameters (e.g., initial stepsize $\gamma_0$) and choose the best ones according to the achieved final accuracy. Theorem \ref{thm:minibatch SignSGD} guarantees convergence to any accuracy for almost all $\gamma_0, B_0$, and the only difference is the convergence speed. Hence, trying various values reveals which values pair better with the unknown problem parameters $L_0, L_1, \Vec{\sigma}$ in bounds \eqref{eq: minibatch signSD arb tuining}.   

Second, one can start to decrease stepsizes at any moment before the oscillating plateau, even from the beginning of the training. In this case, the initial fast convergence phase is not fully utilized, and the slower speed $\tilde{O}(\varepsilon^{-\frac{2\kappa}{\kappa - 1}})$ from \eqref{eq: minibatch signSD arb tuining} comes earlier for all accuracies $\varepsilon > 0$.

\subsubsection{Polyak-Lojasiewicz functions.}  The \algname{minibathc-SignSGD} algorithm can be accelerated for the special class of generalized smooth functions that satisfy the Polyak-Lojasiewicz condition.  
\begin{assumption}[Polyak-Lojasiewicz (PL)]\label{as: PL}
    The objective function $f$ satisfies the PL condition, i.e., for the non-negative constant $\mu$ and $ x \in\R^d$, it holds
    $$
\|\nabla f(x)\|^2_2 \geq 2\mu(f(x) - f(x^*)). 
 $$
\end{assumption}
For example, $\mu$-strongly convex functions satisfy the PL condition. A similar behavior has also been observed in over-parameterized models \cite{liu2022loss}. For these functions, we use restarts on \algname{minibatch-SignSGD} to achieve the following HP function accuracy $f(x^T) - f(x^*) \leq \varepsilon$. The explicit algorithm and parameters are presented in Theorem \ref{thm:restarted minibatch SignSGD}, Appendix \ref{subsec:restarted}, and it achieves the bounds 
\begin{eqnarray}
    N = \tilde{O}\left( \left(\frac{L_0d }{\mu}  + \frac {  L_1 d^\frac32 \sqrt{\Delta}}{\sqrt{\mu}} \right)\left[1 + \left(\frac{\|\Vec{\sigma}\|^2_1}{\mu \varepsilon}\right)^\frac{\kappa}{2(\kappa-1)}\right]\right). \label{eq: restarts bounds main} 
\end{eqnarray}

For the standard smoothness $L_1 = 0$, we compare our method with the most related \algname{ClipSGD} with the complexity bound $\tilde{O}\left( \frac {  L_0}{\mu} \left[1 + \left(\frac{L_0\|\Vec{\sigma}\|^2_2}{\mu^2\varepsilon}\right)^\frac{\kappa}{2(\kappa-1)}\right]\right)$\cite{sadiev2023high}. In contrast, \algname{minibatch-SignSGD} does not require adjusting the clipping schedule and has better $L_0/\mu$ dependency. However, our bound has to be restarted and has extra $d$ factors and larger $\|\Vec{\sigma}\|^2_1$ variance, which remain even after considering lower $\ell_2$-norm accuracy $\varepsilon = \varepsilon'\sqrt{d}$.

\subsection{\algname{SignSGD} with majority voting for symmetric HT noise}\label{sec: signsggd with majority}

The second approach to noise reduction inherent to sign-based methods is majority voting.

\paragraph{Majority voting.} As mentioned above, the original motivation of \algname{SignSGD} is fast communication in distributed optimization \cite{bernstein2018majorityvote,jin2020stochastic}. %Consider one server and $M$ workers, each of which computes its own gradient estimate. The server receives signs of all estimates, aggregates them, and sends the updated estimate back to the workers. 
In the literature, various types of communication were studied, but the most effective one turned out to be majority voting. For sign vectors $\sign(g^{k}_i), i \in \overline{1,M}$, the resulting update vector is the majority of the received signs $g^k = \sign(\sum_{i=1}^M \sign(g^{k}_i)).$

To be effective, majority voting must decrease the probability of failure of the resulting vector with the growth of $M.$ However, for very skewed or bimodal random variables, it might not be true. Choosing the most frequent value from the sign sequence $\{\sign(g^k_i)\}_{i=1}^M$ is actually $M$ Bernoulli trials. In these trials,  the probability of choosing a correct answer grows only if the probability of failure of a single worker is less than $\frac{1}{2}$, i.e.: $\mathbb{P}\left[\sign(\nabla f (x^k)) \neq \sign(g^k_i) \right] < \frac12, \forall i \in \overline{1,M}.$
For example, this condition is satisfied if the noise of the gradient estimate for each coordinate is \textit{unimodal and symmetric about its true value}. We use this assumption, but other assumptions \cite{safaryan2021stochastic} are valid as well.
\begin{algorithm}[ht!]
\caption{\algname{MajorityVote-SignSGD} }
\label{alg:majorityvotesignSGDsingle}   
\begin{algorithmic}[1]
\REQUIRE Starting point $x^0 \in \R^d$, number of iterations $T$, stepsizes  $\{\gamma_k\}_{k=1}^{T}$, batchsizes $\{M_k\}_{k=1}^{T}$.

\FOR{$k=1,\ldots, T$}
\STATE Sample $\{\xi^k_i\}_{i=1}^{B_k}$ and  compute  $x^{k+1} = x^k - \gamma_k \cdot\sign\left(\sum_{i=1}^{M_k} \sign(\nabla f(x^k, \xi^k_i))\right)$;
\ENDFOR
\ENSURE uniformly random point from $\{x^1, \dots, x^{T}\}$ . 
\end{algorithmic}
\end{algorithm}

%In this case, we can break lower bounds achieved by \algname{minibatch-SignSGD} and observe the rates as if noise is BV. For symmetric noise, majority voting is more effective for variance reduction than averaging. Moreover, majority voting allows working with oracles which give only signs of gradient estimates.

\begin{theorem}[\textbf{HP complexity for \algname{MajorityVote-SignSGD}}]\label{thm:com-sign conv}
  Consider lower-bounded $(L_0, L_1)$-smooth function $f$ (As. \ref{as: bounded}, \ref{as: smooth}) and the gradient estimates corrupted by \textbf{unimodal and
symmetric HT noise with $\kappa > 0$} (As. \ref{as: pBCM}). Then Alg. \ref{alg:majorityvotesignSGDsingle} requires the sample complexity $N$ to achieve $\frac{1}{T} \sum_{k=1}^{T}  \|\nabla f(x^k)\|_1 \leq \varepsilon$ with probability at least $1-\delta$ for:

\textbf{Optimal tuning:}  $T = O\left(\frac{\Delta L^\delta_1 d^\frac{3}{2} }{\varepsilon}\right), \gamma_k \equiv \frac{1}{48 L_1^\delta d^\frac32} , M_k \equiv \max \left\{\frac{160}{\kappa^2}, \frac{2^{16}\|\Vec{\sigma}\|^2_1}{\varepsilon^2}\right\}$ for $\varepsilon \geq \frac{8L_0}{L_1\sqrt{d}}$ and $T = O\left(\frac{\Delta L_0^\delta d }{\varepsilon^2}\right), \gamma_k \equiv \sqrt{\frac{\Delta}{ 80L_0^\delta dT}} , M_k \equiv \max \left\{\frac{160}{\kappa^2}, \frac{2^{16}\|\Vec{\sigma}\|^2_1}{\varepsilon^2}\right\}$ for $\varepsilon \leq \frac{8L_0}{L_1\sqrt{d}}$:
\begin{equation}
   N = O\left(\left(\frac{\Delta L_0 d }{\varepsilon^2} + \frac{\Delta L_1d^\frac32}{\varepsilon}\right)\left[\frac{1}{\kappa^2} +  \left(\frac{\|\Vec{\sigma}\|_1}{\varepsilon}\right)^2\right]\log\nicefrac{1}{\delta}\right), \label{eq: majority sign SGD optimal} 
\end{equation}
\textbf{Arbitrary tuning:\footnote{These bounds are proved for a metric $\min_{k \in \overline{1,T}}\|\nabla f(x_k)\|_1  \leq \varepsilon$.}}  Until plateau $\gamma_k = \gamma_0 \leq \frac{1}{48L^\delta_1d^\frac32}, M_k =  M_0(\frac{k}{\kappa})^2$, after $\gamma_k = \frac{\gamma_0}{\sqrt{k}}, M_k = \frac{M_0k}{\kappa^2}$:
\begin{eqnarray}
    \varepsilon \geq \frac{8L_0}{L_1\sqrt{d}} &\Rightarrow& N = \tilde{O}\left( \frac{M_0(\Delta/\gamma_0)^3 + \|\Vec{\sigma}\|_1^3/M_0^2}{\kappa^2\varepsilon^3} \right), \notag \\
    \varepsilon \ll \frac{8L_0}{L_1\sqrt{d}} &\Rightarrow& N =  \tilde{O}\left(\frac{M_0(L_0^\delta \gamma_0 d + \Delta/\gamma_0)^4 + \|\Vec{\sigma}\|_1^4/M_0}{\kappa^2\varepsilon^4}  \right), \notag
\end{eqnarray} 
where $\Delta = f(x^1) - f^*, L_0^\delta = L_0 \log(\nicefrac{1}{\delta}), L_1^\delta = L_1 \log(\nicefrac{1}{\delta}).$

\end{theorem}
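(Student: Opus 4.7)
The plan is to reduce the analysis to the \algname{SignSGD} convergence lemma (Lemma \ref{lem: signsgd T update}) by computing an effective per-coordinate noise bound for the majority-voted estimator $\hat{g}^k = \sign\bigl(\sum_{i=1}^{M_k} \sign(\nabla f(x^k, \xi_i^k))\bigr)$. Unlike batch averaging, majority voting converts stochasticity into a Bernoulli-like sign-flip event per coordinate, whose failure probability decays \emph{exponentially} in $M_k$ whenever the per-sample sign-flip probability is bounded away from $\tfrac{1}{2}$. Symmetry and unimodality of the noise provide exactly this bound-away guarantee, which is why the assumption $\kappa > 0$ (rather than $\kappa > 1$) suffices here; in particular, the $\ell_1$ effective noise seen by Lemma \ref{lem: signsgd T update} can be driven below any target $\varepsilon$ even when the noise has no finite first moment.

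First I would establish a per-coordinate margin lemma: under symmetric unimodal noise with $\EE_\xi\bigl[|\nabla f(x,\xi)_j - \nabla f(x)_j|^\kappa\bigr] \leq \sigma_j^\kappa$, show that $p_{j,k} := \PP[\sign(\nabla f(x^k,\xi)_j)\neq \sign(\nabla f(x^k)_j)]$ admits a two-regime lower bound on the margin $\tfrac{1}{2}-p_{j,k}$: a universal floor of order $\Theta(\kappa)$ coming from unimodality (which persists even when no moments exist), together with a gradient-dependent floor of order $\Theta(|\nabla f(x^k)_j|/\sigma_j)$ obtained by combining symmetry with the anti-concentration of the noise density near zero. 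A Hoeffding bound on $M_k$ independent sign-Bernoulli trials then yields
\[
\tilde p_{j,k} := \PP[\hat g^k_j \neq \sign(\nabla f(x^k)_j)] \leq \exp\!\bigl(-2M_k\bigl(\tfrac{1}{2}-p_{j,k}\bigr)^2\bigr),
\]
and a one-variable maximization $x \mapsto x\exp(-Mx^2/\sigma_j^2)$ gives the key inequality $\sum_j |\nabla f(x^k)_j|\,\tilde p_{j,k} \lesssim \|\vec{\sigma}\|_1/\sqrt{M_k} + \|\nabla f(x^k)\|_1 \exp(-\Omega(M_k\kappa^2))$. The two-regime choice $M_k = \max\{160/\kappa^2,\, 2^{16}\|\vec{\sigma}\|_1^2/\varepsilon^2\}$ is designed so that both contributions are $O(\varepsilon)$; this plays the role of $\|\vec{\sigma}_k\|_1$ in Lemma \ref{lem: signsgd T update} for the majority-voted estimator.

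Given this effective-noise bound, substitution into Lemma \ref{lem: signsgd T update} produces
\[
\sum_{k=1}^T \tfrac{\gamma_k}{16}\|\nabla f(x^k)\|_1 \lesssim \Delta + L_0 d \sum_{k=1}^T \gamma_k^2 + \varepsilon \sum_{k=1}^T \gamma_k + d\log\tfrac{1}{\delta}\bigl(\gamma_1\|\nabla f(x^1)\|_1 + L_0 C_T\bigr),
\]
and dividing by $\sum_k \gamma_k$ gives the average-$\ell_1$ guarantee. In the regime $\varepsilon \geq 8L_0/(L_1\sqrt{d})$ the binding constraint is the cap $\gamma_k \leq 1/(48 L_1^\delta d^{3/2})$, and setting the stepsize constant at this cap yields $T = O(\Delta L_1^\delta d^{3/2}/\varepsilon)$; in the regime $\varepsilon \leq 8L_0/(L_1\sqrt{d})$ the $L_0 d \sum \gamma_k^2$ term dominates, and balancing it against $\Delta$ with $\gamma_k \equiv \sqrt{\Delta/(80 L_0^\delta d T)}$ yields $T = O(\Delta L_0^\delta d/\varepsilon^2)$. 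Multiplying $T$ by $M_k$ then produces \eqref{eq: majority sign SGD optimal}. The arbitrary-tuning part follows the same template but with growing-batchsize schedules $M_k \sim M_0(k/\kappa)^2$ during the constant-stepsize phase and $M_k \sim M_0 k/\kappa^2$ after switching to $\gamma_k=\gamma_0/\sqrt{k}$, chosen so that the per-step effective noise keeps shrinking fast enough to dominate $\gamma_k$ in the cumulative sum — structurally identical to the arbitrary-tuning calculation in Theorem \ref{thm:minibatch SignSGD}.

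I expect the main technical obstacle to be the margin lemma in the regime $\kappa < 1$, where neither the first moment nor the variance of the noise exists: there, symmetry is essential merely to make the sign-flip probability well-defined, and one must extract the $\Theta(\kappa)$ universal margin via a quantitative form of unimodality (a Gauss-type anti-concentration bound on the median-centered CDF, or an equivalent density-at-mode estimate). Once this quantitative margin is in hand, the remainder of the argument is bookkeeping: the Chernoff exponentiation, the effective-noise substitution into Lemma \ref{lem: signsgd T update}, and the two-regime stepsize tuning all parallel Theorem \ref{thm:minibatch SignSGD}, with batching replaced by majority voting as the noise-reduction mechanism.
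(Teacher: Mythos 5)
Your proposal is correct and follows essentially the same route as the paper: reduce to Lemma \ref{lem: signsgd T update}, bound the per-coordinate majority-vote failure probability via a two-regime margin estimate derived from a Gauss-type anti-concentration inequality for symmetric unimodal distributions (the paper invokes exactly such a lemma, due to Dharmadhikari and Joag-dev), absorb the resulting $\tfrac{1}{32}\|\nabla f(x^k)\|_1$ remainder into the left-hand side, and then reuse the two-regime stepsize tuning of Theorem \ref{thm:minibatch SignSGD}. The only substantive difference is the concentration tool for the vote: you use Hoeffding, $\exp(-2M_k(\tfrac12-p_{j,k})^2)$, followed by the maximization of $x\mapsto x\exp(-Mx^2/\sigma_j^2)$ to extract $\sigma_j/\sqrt{M_k}$, whereas the paper uses the algebraic bound $\bigl(1+M_k/(\tfrac{1}{4\varepsilon_j^2}-1)\bigr)^{-1}$ together with $\tfrac{1}{1+x^2}\le\tfrac{1}{2x}$; both yield the same $\|\Vec{\sigma}\|_1/\sqrt{M_k}$ effective noise and the same $1/32$ constant-fraction term, so this is an interchangeable technical choice. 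One caution on phrasing: the $\Theta(\kappa)$ margin is not \emph{universal} --- for a coordinate with $|\nabla f(x^k)_j|\ll\sigma_j$ the sign-flip probability is arbitrarily close to $\tfrac12$ by symmetry --- it holds only in the regime $S_j^\kappa\gtrsim\kappa^\kappa/(\kappa+1)^{\kappa-1}$, which is how both you (via the two-regime framing) and the paper actually use it.
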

%The second part of Theorem \ref{thm:com-sign conv} states that, for symmetric heavy-tailed noises, majority voting is more effective than batching in the undistributed setup. Indeed, instead of batching with batchsize $B \sim \nicefrac{1}{\varepsilon^\frac{\kappa}{\kappa-1}}$, one can look at the majority voting with $M \sim \nicefrac{1}{\varepsilon^2}$ voters as an alternative which achieves accuracy $\varepsilon$ in $T \sim \nicefrac{1}{\varepsilon^2}$ iterations and sample complexity $\nicefrac{1}{\varepsilon^4}$.
%The bound \eqref{eq: majority signSGD optimal} matches \algname{minibatch-SignSGD} bound \eqref{eq: sign SGD optimal} if $\kappa = 2$. 

The proof of Theorem \ref{thm:com-sign conv}, optimal tuning for infinite horizon, and arbitrary tuning for finite horizon are located in Appendix \ref{subsec: majority vote sign proofs}. For PL functions, we use the restart technique and achieve bounds similar to \eqref{eq: restarts bounds main} as if $\kappa = 2$. The results are presented in Theorem \ref{thm:restarted majority SignSGD} in Appendix \ref{subsec:restarted}.

\paragraph{Discussion.} Similar to previous works dedicated to symmetric HT noise \cite{armacki2024large, puchkin2024breaking, compagnoni2025unbiased}, the severity of the corrupting noise, namely, the value of $\kappa$ has much milder effect on convergence of \algname{MajorityVote-SignSGD} in comparison with \algname{minibatch-SignSGD} and its bound \eqref{eq: sign SGD optimal}.  Moreover, in the case of arbitrary tuning, despite of chosen parameters, $O(\varepsilon^{-4})$ dependency remains intact, while deterioration happens only in $\Delta, L_0, L_1,  \Vec{\sigma}$-depending factors.

Under standard smoothness $L_1 = 0$, the bounds \eqref{eq: majority sign SGD optimal} with linear $\log\nicefrac{1}{\delta}$ dependency matches the optimal bound $\Omega\left(\nicefrac{\Delta L_0\|\Vec{\sigma}\|_2^2}{\varepsilon^{4}}\right)$ in expectation for first-order non-convex methods under \textit{BV noise} \cite{ arjevani2023lower}. %In \cite{armacki2023high, armacki2024large}, the authors considered  only symmetric noise and proved bounds \textit{arbitrary close} to $O(\varepsilon^{-4})$ in online paradigm. In \cite{compagnoni2024adaptive, compagnoni2025unbiased}, the authors study only Student's noise. On the contrary, our bounds are tight for all symmetric HT noises.

%\begin{remark}
%    In Appendix \ref{sec: distributed}, we provide a method built on top of \algname{minibatch-SignSGD} algorithm and majority voting for the distributed setup with the fixed number of workers.
%\end{remark}

\subsection{\algname{SignSGD} with momentum and small batchsizes}\label{sec: MsignSGD}

Both \algname{minibatch-SignSGD} and \algname{minibatch-SignSGD} methods require increasing batch sizes comparable to the number of iterations. In order to avoid large batchsizes, one can utilize the momentum technique, resulting in the same total sample complexity.  The proof is located in Appendix \ref{subsec: MSignSGD proof}. 

\begin{algorithm}[ht!]
\caption{\algname{M-SignSGD} }
\label{alg:SignSGD-M}   
\begin{algorithmic}[1]
\REQUIRE Starting point $x^1 \in \R^d$, number of iterations $K$, stepsizes  $\{\gamma_k\}_{k=1}^{T}$, momentums $\{\beta_k\}_{k=1}^{T}$.

\FOR{$k=1,\ldots, T$}
\STATE Sample $\xi^k$ and compute $m^k = \beta_k m^{k-1} + (1-\beta_k) \nabla f(x^k, \xi^k)$;  
\STATE Set $x^{k+1} = x^k - \gamma_k \cdot \text{sign}(m^k)$;
\ENDFOR
\ENSURE uniformly random point from $\{x^1, \dots, x^{T}\}$ . 
\end{algorithmic}
\end{algorithm}
%In \algname{M-SignSGD}, the sign operator is applied to the momentum vector instead of the gradient estimate. The following theorem states in expectation convergence rates and parameters for \algname{M-SignSGD}.    
\begin{theorem}[\textbf{Complexity for \algname{M-SignSGD} in expectation}]\label{thm:momentum SignSGD}
Consider a lower-bounded $(L_0,L_1)$-smooth function $f$ (As. \ref{as: bounded}, \ref{as: smooth}) and HT gradient estimates (As. \ref{as: pBCM}). Then, Alg. \ref{alg:SignSGD-M} requires $T$ iterations  to achieve  $\frac{1}{T} \sum_{k=1}^{T}  \EE \left[ \|\nabla f(x^k)\|_1 \right]  \leq \varepsilon$ starting with $\Delta = f(x^1) - f^*$:

\textbf{Optimal tuning:} $\beta_k \equiv 1 - \min\left\{1, \left(\frac{\Delta L_1 \sqrt{d}}{T \|\Vec{\sigma}\|_\kappa}\right)^\frac{\kappa}{2\kappa - 1}\right\}, \gamma_k  \equiv \frac{1 - \beta_k}{8} \frac{1}{L_1d}$  for $\varepsilon \geq \frac{3L_0}{L_1}$ and $1  - \beta_k \equiv   1 - \min\left\{1, \left(\frac{\Delta L_0}{T \|\Vec{\sigma}\|_\kappa^2}\right)^\frac{\kappa}{3\kappa - 2}  \right\}, \gamma_k  \equiv \sqrt{\frac{\Delta (1 - \beta_k)}{T L_0 d}}$ for $\varepsilon \leq \frac{3L_0}{L_1}$:
\begin{equation}
     T = O\left(\left(\frac{\Delta L_0d}{\varepsilon^2 } +\frac{\Delta L_1d}{\varepsilon }\right)\left(1 + \left(\frac{\sqrt{d}\|\Vec{\sigma}\|_\kappa}{\varepsilon}\right)^\frac{\kappa }{\kappa  -1}\right)\right). \label{eq: MSignSGD optimal}
\end{equation}
\end{theorem}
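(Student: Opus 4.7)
\textbf{Proof proposal for Theorem \ref{thm:momentum SignSGD}.}

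My plan is to analyze the momentum iterates by tracking the error $e^k := m^k - \nabla f(x^k)$ and combining a one-step descent lemma for sign updates with a recursive noise/drift bound on $e^k$. First, I would write the standard $(L_0, L_1)$-smoothness descent inequality: since $\|x^{k+1}-x^k\|_2 = \gamma_k \sqrt{d}$, provided $\gamma_k\sqrt{d} \leq c/L_1$ for some small constant $c$, Assumption~\ref{as: smooth} yields
\begin{equation*}
f(x^{k+1}) \leq f(x^k) - \gamma_k \langle \nabla f(x^k), \sign(m^k)\rangle + \tfrac{\gamma_k^2 d}{2}(L_0 + L_1\|\nabla f(x^k)\|_2).
\end{equation*}
Using the elementary sign-alignment inequality $-\langle \nabla f(x^k), \sign(m^k)\rangle \leq -\|\nabla f(x^k)\|_1 + 2\|e^k\|_1$ and $\|\nabla f(x^k)\|_2 \leq \|\nabla f(x^k)\|_1$, I rearrange to obtain a per-step bound of the form $\gamma_k \|\nabla f(x^k)\|_1(1 - O(\gamma_k L_1 d)) \leq f(x^k) - f(x^{k+1}) + 2\gamma_k \|e^k\|_1 + \tfrac{\gamma_k^2 L_0 d}{2}$.

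Next I would control $\mathbb{E}\|e^k\|_1$. The momentum identity gives
\begin{equation*}
e^k = \beta_k e^{k-1} + \beta_k(\nabla f(x^{k-1})-\nabla f(x^k)) + (1-\beta_k)\zeta^k,
\end{equation*}
where $\zeta^k := \nabla f(x^k,\xi^k) - \nabla f(x^k)$. Unrolling and applying $\mathbb{E}\|\cdot\|_1$: the drift part is bounded by $(L_0+L_1\|\nabla f\|_2)\gamma_j\sqrt{d}$ (multiplied by the coordinate factor $\sqrt d$ when converting $\ell_2$ to $\ell_1$), and the noise part is an independent weighted sum $\sum_{j=1}^k (1-\beta_k)\prod_{i>j}\beta_i \zeta^j$. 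For the HT noise term I would apply a coordinatewise Rosenthal/von Bahr--Esseen type inequality: for independent centered coordinates with bounded $\kappa$-th moment,
\begin{equation*}
\mathbb{E}\bigl|\textstyle\sum_j w_j \zeta^j_i\bigr| \leq \bigl(\textstyle\sum_j |w_j|^\kappa\bigr)^{1/\kappa} \sigma_i,
\end{equation*}
then sum over coordinates and convert $\|\vec\sigma\|_1 \to \sqrt d \|\vec\sigma\|_\kappa$ via Hölder (this is where the extra $\sqrt d$ in \eqref{eq: MSignSGD optimal} appears). For constant $\beta$, $\sum_j |w_j|^\kappa \lesssim (1-\beta)^{\kappa-1}$, so $\mathbb{E}\|\text{noise part}\|_1 \lesssim (1-\beta)^{(\kappa-1)/\kappa}\sqrt{d}\|\vec\sigma\|_\kappa$.

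Telescoping the descent inequality over $k=1,\dots,T$, taking expectations, and dividing by $\sum\gamma_k$, I arrive at
\begin{equation*}
\tfrac{1}{T}\sum_k \mathbb{E}\|\nabla f(x^k)\|_1 \lesssim \tfrac{\Delta}{\gamma T} + \gamma L_0 d + (1-\beta)^{(\kappa-1)/\kappa}\sqrt d \|\vec\sigma\|_\kappa + \tfrac{\gamma L_1 d}{1-\beta}\cdot(\text{gradient-norm feedback}),
\end{equation*}
the last factor coming from the drift $L_1\|\nabla f\|$ term in the smoothness bound; this self-referential term is absorbed into the LHS provided $\gamma L_1 d/(1-\beta) \leq 1/2$, which motivates the choice $\gamma \asymp (1-\beta)/(L_1 d)$. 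Finally, I would balance the three remaining terms $\Delta L_1 d/((1-\beta)T) + L_0(1-\beta)/L_1 + (1-\beta)^{(\kappa-1)/\kappa}\sqrt d\|\vec\sigma\|_\kappa \leq \varepsilon$. In the regime $\varepsilon \geq 3L_0/L_1$ the $L_0$ term is negligible, yielding $1-\beta \asymp (\Delta L_1\sqrt d/(T\|\vec\sigma\|_\kappa))^{\kappa/(2\kappa-1)}$; in the regime $\varepsilon \leq 3L_0/L_1$ one instead balances against the $L_0$ term using $\gamma \asymp \sqrt{\Delta(1-\beta)/(TL_0 d)}$, giving $1-\beta \asymp (\Delta L_0/(T\|\vec\sigma\|_\kappa^2))^{\kappa/(3\kappa-2)}$. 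Solving for $T$ in each regime and combining yields \eqref{eq: MSignSGD optimal}.

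The main technical obstacle is controlling the HT noise accumulation under momentum: a naive coordinatewise triangle inequality loses a factor and gives a suboptimal $\|\vec\sigma\|_1$ with no noise-reduction benefit from $\beta\to 1$. Establishing the $(1-\beta)^{(\kappa-1)/\kappa}$ reduction requires the Rosenthal-type bound above and careful handling of the fact that variance may be infinite for $\kappa < 2$; extra care is needed because $\zeta^j$ depends on $x^j$ which depends on past $\zeta$'s, so the weighted sum is a martingale rather than a sum of independent vectors, and one must apply the $\kappa$-moment inequality for martingale differences (e.g.\ von Bahr--Esseen) rather than for independent sums.
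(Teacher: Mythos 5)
Your proposal is correct and follows essentially the same route as the paper's proof: the same sign-alignment descent inequality under $(L_0,L_1)$-smoothness, the same recursion for $e^k=m^k-\nabla f(x^k)$ split into initial error, drift, and noise, the same von Bahr--Esseen-type $\kappa$-moment inequality for martingale differences (the paper's HT Batching Lemma) to get the $(1-\beta)^{(\kappa-1)/\kappa}\sqrt{d}\,\|\Vec{\sigma}\|_\kappa$ noise term, the same absorption of the gradient-feedback term via $\gamma\lesssim(1-\beta)/(L_1 d)$, and the same two-regime balancing. One small slip: your intermediate display writes the bias term as $\gamma L_0 d$, whereas the dominant $L_0$ contribution is the accumulated drift $\gamma L_0 d/(1-\beta)$ from the geometric sum in $e^k$ — your subsequent choice $\gamma\asymp\sqrt{\Delta(1-\beta)/(TL_0d)}$ and the exponent $\kappa/(3\kappa-2)$ implicitly use the correct term, so the final rates are unaffected.
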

\paragraph{Comparison with \algname{minibatch-SignSGD}.} In comparison with \algname{minibatch-SignSGD}, \algname{M-SignSGD} allows to use small constant batchsizes, which are more common in practice, especially in LLMs training. However, in theory, its bound \eqref{eq: MSignSGD optimal} in expectation  has large factor $\sqrt{d}\|\Vec{\sigma}\|_\kappa$  instead of $\|\Vec{\sigma}\|_1$ from the HP bound \eqref{eq: sign SGD optimal}. Nevertheless, for $\kappa$ close to $2$ and dense gradient noise $\Vec{\sigma}$, $\|\Vec{\sigma}\|_\kappa$ is exactly $\sqrt{d}$ times lower than $\|\Vec{\sigma}\|_1$ and the difference is fading. Also, the convergence during the first phase is $\sqrt{d}$ times faster, but the threshold $\varepsilon \geq 3L_0/{L_1}$ is larger in return.

\paragraph{Related works.}  In \cite{hubler2024parameter}, the authors work with \algname{M-NSGD} under BV noise and derive  $\tilde{O}\left({(\Delta L_1  + \|\Vec{\sigma}\|_2 + L_0/L_1)^4}/{\varepsilon^4}\right)$ bound in expectation. Our bound \eqref{eq: MSignSGD optimal} with $\kappa = 2$ has a milder dependency on parameters $\Delta,L_0,L_1$ and does not degenerate as $L_1 \to 0.$ Under standard smoothness, there exists an optimal bound for \algname{M-NSGD} with $\kappa \in (1,2]$ that matches our bound \eqref{eq: MSignSGD optimal} for $L_1 = 0$ up to $d$ factors (see Table \ref{tab: results summary}).  Our bounds are the first to combine HT noise and $(L_0, L_1)$-smoothness.

\vspace{-8pt}
\section{Experiments}\label{sec:experiments}

\begin{wrapfigure}[14]{r}{8cm}
\vspace{-11mm}
\begin{minipage}{8cm}
\begin{table}[H]
    \caption{Comparison of validation perplexity for various optimization methods across LLaMA model scales trained on C4}
    \label{tab:pre-training}
 
    \begin{center}
    \begin{tabular}{c|ccc}
    \toprule
    \textbf{Method} & \multicolumn{3}{c}{\textbf{Perplexity $\downarrow$}} \\
    \midrule
    Model size & 130M & 350M & 1.3B \\
    \midrule
    \algname{M-SignSGD} & \textbf{18.37}$_{\pm .01}$ & \textbf{13.73} & \textbf{11.56}\\
    \midrule
    \algname{M-NSGD} &  19.28$_{\pm .03}$ & 14.60 & 12.62 \\
    \algname{M-ClippedSGD} & 18.95$_{\pm .03}$ & 14.30 & 12.30 \\
    \algname{AdamW} & 18.67$_{\pm .00}$ & 13.78 & 11.57 \\
    \midrule
    Training tokens  & 10B & 30B & 30B \\
    Number of iterations  & 100k & 300k & 300k \\
    \bottomrule
    \end{tabular}
    \end{center}
    \vspace{-1.em}
\end{table}
\end{minipage}
\vspace{-5pt}
\end{wrapfigure}

In this section, we present experimental results for sign-based methods described in~\Cref{sec: sec_2}. 
To demonstrate the effectiveness of sign-based methods, we focus on language model training tasks. 
This choice is motivated by two factors: first, these tasks are known to exhibit heavy-tailed noise~\cite{zhang2020adaptivegood} and generalized smoothness~\cite{zhang2020gradient, liu2023preGenSmooth} characteristics, and second, they represent an important real-world application domain.
% \subsection{\algname{M-SignSGD} on LLaMA pre-training}

To evaluate the performance of \algname{M-SignSGD}~(\Cref{alg:SignSGD-M}) we adopt the established experimental setup from~\cite{relora}, training LLaMA-like models~\citep{llama} of various sizes --- up to 1.3B parameters --- on the Colossal Clean Crawled Corpus (C4) dataset~\citep{c4}. 
The C4 dataset represents a colossal, cleaned version of Common Crawl's web corpus, specifically designed for pre-training language models and word representations.

For our comparison, we focus on two key techniques for handling heavy-tailed noise: gradient clipping with momentum and gradient normalization with momentum. As representative methods, we choose \algname{M-ClippedSGD} \cite{zhang2020improved} and \algname{M-NSGD} \cite{cutkosky2020momentum}, respectively.
We also compare to \algname{AdamW}~\cite{loshchilov2017decoupled}, as a de-facto method for the first-order optimization algorithm for deep learning.
To ensure a fair comparison, we conduct an extensive grid search over key hyperparameters, including learning rate, weight decay, and clipping level. 
Detailed information on the final hyperparameter values and complete experimental setup is provided in~\Cref{app:pre-training}.

\Cref{tab:pre-training} presents final validation perplexity 
% (exponent of validation loss) 
for each method. 
\algname{M-SignSGD} demonstrates superior performance over other heavy-tail mitigating baselines, aligning with our theoretical results.
Furthermore, to our surprise, we discovered that \algname{M-SignSGD} outperforms the strong AdamW baseline, despite careful hyperparameter tuning of the latter. 
These findings highlight the significant potential of \algname{M-SignSGD} for language model pretraining. 
Given these promising results on smaller LLaMA models, we invite the research community to further explore sign-based optimization methods for large-scale LLM training and other computationally demanding applications of practical importance.

To ensure the generalizability of our findings, we complemented our experiments with a new setup --- new architecture and data. 
We have switched model to the Switch Transformer MoE architecture~\citep{fedus2022switch}, and data to the FineWeb dataset~\citep{penedo2406fineweb}, a popular corpus for LLM pre-training.
Further details on experimental setup and results can be found in~\Cref{sec: moe exps}.

\bibliography{refs}
\bibliographystyle{plain}

\newpage

\appendix

\section{Proofs}\label{app: proofs}
\subsection{$(L_0, L_1)$-smoothness} \label{sunbsec: gen smoothness}

Standard $L$-smoothness assumes that the gradient of a function is globally Lipschitz continuous. However, this condition can be too restrictive in practice. Many functions arising in optimization, especially in Machine Learning and statistics, either do not satisfy $L$-smoothness or satisfy it with a very large constant $L_0$, leading to overly pessimistic theoretical guarantees.
$(L_0, L_1)$-smoothness (Assumption \ref{as: smooth}) is weaker than $L$-smoothness and allows finer control over the smoothness behavior of functions with rapidly growing curvature in regions where the gradient is large.

Importantly, many functions satisfy $(L_0, L_1)$-smoothness with \emph{significantly smaller constants} $L_0$ and $L_1$ compared to the $L$ required for global Lipschitz smoothness. As a result, optimization algorithms tailored for $(L_0, L_1)$-smooth functions can achieve better convergence guarantees, especially in settings involving large gradients or heavy-tailed noise. The examples of practically used $(L_0, L_1)$-smooth functions include:

\begin{example}[Power of Norm]
    Let $f(x) = \|x\|^{2n}$, where $n$ is a positive integer. Then, $f(x)$ is convex and $(2n,2n-1)$-smooth. Moreover, $f(x)$ is not $L$-smooth for $n\geq 2$ and any $L \geq 0$.
\end{example}

\begin{example}[Exponent of the Inner Product]
    Function $f(x) = \exp(a^\top x)$ for some $a\in \R^d$ is convex, $(0,\|a\|)$-smooth, but not $L$-smooth for $a\neq 0$ and any $L \geq 0$.
\end{example}

\begin{example}[Logistic Function] \label{exp: log func L1}
    Consider logistic function: $f(x) = \log\left(1 + \exp(-a^\top x)\right)$, where $a \in \R^d$ is some vector. It is known that this function is $L$-smooth and convex with $L = \|a\|^2$. However, one can show that $f$ is also $(L_0, L_1)$-smooth with $L_0 = 0$ and $L_1 = \|a\|$. For $\|a\| \gg 1$, both $L_0$ and $L_1$ are much smaller than $L$.
\end{example}

\begin{example}[Quadratic Function with Linear Term.]
    Let $f(x) = \frac{1}{2}x^\top A x + b^\top x$, where $A \in \R^{d \times d}$ is symmetric positive semi-definite, and $b \in \R^d$. Then $f$ is convex and $(L_0, 0)$-smooth with $L_0 = \|A\|$. This function is also $L$-smooth with the same $L$, but here $(L_1 = 0)$ shows the gradient is Lipschitz regardless of gradient size.
\end{example}

The condition of $(L_0,L_1)$-smoothness from Assumption \ref{as: smooth} can be formulated in terms of inequalities without $\sup$ operator, similar to the case of standard smoothness.  
\begin{lemma}($(L_0,L_1)$-smoothness properties \cite{gorbunov2024methods})  
\label{lem: L_0,L_1 smoothness}
    For $(L_0, L_1)$-smooth function $f$ (As. \ref{as: smooth}) and $ x,y \in \R^d$, it holds 
    \begin{eqnarray}
        \|\nabla f(x) - \nabla f(y)\|_2 \leq (L_0 + L_1\|\nabla f(y)\|_2)\exp(L_1\|x-y\|_2)\|x-y\|_2, \notag \\
        f(y) - f(x) - \la \nabla f(x), y -x \ra \leq \frac{L_0 + L_1\|\nabla f(x)\|_2 }{2}\exp(L_1\|x-y\|_2)\|x-y\|_2^2.
    \end{eqnarray}
\end{lemma}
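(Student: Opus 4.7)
The plan is to reduce both inequalities to a Gronwall-type control of $\|\nabla f\|_2$ along the straight segment joining $x$ and $y$. The first inequality will follow by replacing the $\sup$ appearing in Assumption~\ref{as: smooth} by an explicit exponential factor multiplied by $\|\nabla f(y)\|_2$; the second will follow from the first via the fundamental theorem of calculus.

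\textbf{Proof of the gradient bound.} Fix $x,y \in \R^d$, write $r = \|x-y\|_2$, and parameterize the segment by $\phi(t) = y + t(x-y)$ for $t \in [0,1]$, so that $\phi(0) = y$ and $\|\phi(t)-\phi(s)\|_2 = (t-s)r$ for $0\leq s\leq t\leq 1$. Define $h(t) = \|\nabla f(\phi(t))\|_2$ and $H(t) = \sup_{\tau \in [0,t]} h(\tau)$. Applying Assumption~\ref{as: smooth} to the pair $(\phi(t), \phi(s))$ and using the reverse triangle inequality yields
\[
h(t) - h(s) \;\leq\; \|\nabla f(\phi(t)) - \nabla f(\phi(s))\|_2 \;\leq\; (L_0 + L_1 H(t))(t-s)\,r.
\]
A Gronwall-type argument (discretize $[0,t]$ into $N$ equal subintervals, iterate the resulting one-step bound, and send $N \to \infty$ so that $(1 + L_1 r/N)^N \to \exp(L_1 r)$) then gives
\[
H(1) \;\leq\; h(0)\exp(L_1 r) + \tfrac{L_0}{L_1}\bigl(\exp(L_1 r) - 1\bigr).
\]
Substituting this upper bound for $\sup_{u \in [x,y]} \|\nabla f(u)\|_2$ back into Assumption~\ref{as: smooth} and regrouping the $L_0$ terms produces exactly the first claimed inequality.

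\textbf{Proof of the descent inequality.} For the second bound, I would apply the identity
\[
f(y) - f(x) - \la \nabla f(x), y - x\ra \;=\; \int_0^1 \la \nabla f(x + t(y-x)) - \nabla f(x), y - x\ra\, dt
\]
and estimate the integrand by Cauchy--Schwarz. By the first inequality (with the roles of $x$ and $y$ swapped, applied to the pair $(x + t(y-x), x)$ whose distance is $tr$),
\[
\|\nabla f(x + t(y-x)) - \nabla f(x)\|_2 \;\leq\; (L_0 + L_1\|\nabla f(x)\|_2)\exp(L_1 t r)\, t r.
\]
Bounding $\exp(L_1 t r) \leq \exp(L_1 r)$ uniformly in $t \in [0,1]$ and using $\int_0^1 t\, dt = \tfrac12$ then yields the second inequality.

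\textbf{Main obstacle.} The only nontrivial point is the Gronwall step, because Assumption~\ref{as: smooth} does not postulate $C^2$-regularity, so $h$ need not be differentiable and a literal ODE Gronwall cannot be invoked. I would handle this either through the integral form of Gronwall's lemma applied to the continuous function $h$ (continuity of $h$ follows from Assumption~\ref{as: smooth} itself, which makes $\nabla f$ locally Lipschitz), or through the explicit discretization-and-limit argument sketched above. Everything after the Gronwall bound is elementary regrouping and integration.
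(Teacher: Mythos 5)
The paper does not actually prove this lemma: it is imported verbatim from \cite{gorbunov2024methods} with a citation and no argument, so there is no in-paper proof to compare against. Your blind proof is the standard derivation of this result (a Gronwall/bootstrapping bound on $\|\nabla f\|_2$ along the segment, followed by the fundamental theorem of calculus for the descent inequality), and it checks out: the discrete iteration $H_{i+1}(1-L_1 r/N)\leq H_i + L_0 r/N$ telescopes to $H(1)\leq h(0)e^{L_1 r}+\tfrac{L_0}{L_1}(e^{L_1 r}-1)$, substituting back into Assumption~\ref{as: smooth} regroups exactly into $(L_0+L_1\|\nabla f(y)\|_2)e^{L_1 r}r$, and the second inequality follows from the first applied to the pair $(x+t(y-x),x)$ with $\int_0^1 t\,dt=\tfrac12$. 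The one point you should not gloss over is the a priori finiteness of $H(t)$: the implicit inequality $H_{i+1}\leq H_i+(L_0+L_1H_{i+1})r/N$ is vacuously true when $H_{i+1}=+\infty$, so the discretization alone cannot rule out blow-up, and you genuinely need the open-and-closed (or integral-Gronwall-on-a-maximal-interval) argument you sketch in your ``main obstacle'' paragraph rather than treating it as optional. With that supplied, your proof is complete and coincides with the argument in the cited reference.
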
 

\subsection{Technical lemmas and propositions}

We use the following facts from the linear algebra and convex analysis \cite{boyd2004convex}:
%\begin{proposition}[Smoothness inequality]\label{lem: L smooth ineq}
%For $L$-smooth function $f$ (As. \ref{as: smooth}), the following inequality holds true    \begin{equation}
%    f(y) - f(x) - \la \nabla f(x), y -x \ra \leq \frac{L}{2}\|x-y\|_2^2, \quad \forall x, y \in \R^d. 
%\end{equation}
%\end{proposition}
\begin{proposition}[Norm Relation]
    For two norms $ \ell_p$ and $\ell_q$ with $\ 1 \leq p \leq q \leq 2$, the following relation holds true:
    \begin{eqnarray}
        \|x\|_q \leq \|x\|_p \leq d^{\frac{1}{p} - \frac{1}{q}}\|x\|_q, \quad \forall x \in \R^d. \label{eq: norm relation}
    \end{eqnarray}
\end{proposition}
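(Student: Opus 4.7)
The plan is to prove the two inequalities separately, treating the trivial case $p=q$ first and then reducing the non-trivial case $1 \leq p < q$ to two elementary facts: monotonicity of $\ell_p$-norms and H\"older's inequality.

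For the left inequality $\|x\|_q \leq \|x\|_p$, the standard approach is homogeneity plus normalization. If $x = 0$ there is nothing to prove, so assume $x \neq 0$ and, by scaling, reduce to the case $\|x\|_p = 1$. Then $|x_i| \leq 1$ for every coordinate $i \in \overline{1,d}$, so since $q \geq p$ we have $|x_i|^q \leq |x_i|^p$. Summing over $i$ and taking the $q$-th root yields $\|x\|_q \leq \|x\|_p^{p/q} = 1$, which is the desired inequality after undoing the normalization. This step is routine.

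For the right inequality $\|x\|_p \leq d^{1/p - 1/q}\|x\|_q$, the plan is to apply H\"older's inequality to the product $|x_i|^p \cdot 1$. Choose the conjugate exponents $r = q/p$ and $r' = q/(q-p)$, which satisfy $1/r + 1/r' = 1$ since we are in the strict case $p < q$. H\"older then gives
\begin{equation*}
\sum_{i=1}^d |x_i|^p \cdot 1 \;\leq\; \Bigl(\sum_{i=1}^d |x_i|^{p \cdot q/p}\Bigr)^{p/q} \Bigl(\sum_{i=1}^d 1\Bigr)^{(q-p)/q} = \|x\|_q^{p} \cdot d^{(q-p)/q}.
\end{equation*}
Taking the $p$-th root of both sides produces $\|x\|_p \leq d^{(q-p)/(pq)}\|x\|_q = d^{1/p-1/q}\|x\|_q$, completing the proof.

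I do not expect any real obstacle here: the only subtlety is matching the H\"older exponents so that the exponent of $d$ collapses to $1/p - 1/q$, and checking that the argument degenerates correctly when $p=q$ (both conjugate exponents blow up, but the inequality is trivially an equality). Note that the hypothesis $q \leq 2$ is not actually used; the statement and proof are valid for all $1 \leq p \leq q \leq \infty$ (with the usual conventions for $q = \infty$).
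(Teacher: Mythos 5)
Your proof is correct. The paper does not actually prove this proposition -- it is listed among "facts from linear algebra and convex analysis" and justified only by a citation to a standard reference -- so there is no in-paper argument to compare against. Your two-step argument (normalization plus monotonicity of $t \mapsto t^{q}$ versus $t^{p}$ on $[0,1]$ for the left inequality, and H\"older with conjugate exponents $q/p$ and $q/(q-p)$ for the right inequality) is the canonical proof, the exponent bookkeeping checks out, and your remark that the restriction $q \leq 2$ is never used is accurate; the paper only imposes it because that is the range in which it applies the bound.
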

\begin{proposition}[Jensen's Inequality]
    For scalar random variable $\xi$ with bounded $\kappa$-th moment $\kappa \in (1,2]$,  the following inequality holds true:
    \begin{equation}
        \EE [|\xi|] \leq \left(\EE[|\xi|^\kappa] \right)^\frac{1}{\kappa}. \label{eq: Jensen}
    \end{equation}
\end{proposition}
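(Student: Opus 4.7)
The plan is to derive this as a direct application of Jensen's inequality to a suitably chosen convex function, which is the classical route (essentially Lyapunov's inequality or monotonicity of $L^p$-norms in $p$). Since $\kappa \in (1,2]$, in particular $\kappa \geq 1$, so the map $\phi \colon [0,\infty) \to [0,\infty)$ defined by $\phi(t) = t^\kappa$ is convex (its second derivative $\kappa(\kappa-1) t^{\kappa-2}$ is non-negative on $[0,\infty)$). Applying the standard Jensen's inequality for convex functions to the non-negative random variable $|\xi|$ yields
\begin{equation*}
\phi\bigl(\EE[|\xi|]\bigr) \leq \EE\bigl[\phi(|\xi|)\bigr], \quad \text{i.e.,} \quad \bigl(\EE[|\xi|]\bigr)^\kappa \leq \EE\bigl[|\xi|^\kappa\bigr].
\end{equation*}
Both sides are non-negative, and the map $s \mapsto s^{1/\kappa}$ is monotone increasing on $[0,\infty)$, so raising both sides to the power $1/\kappa$ preserves the inequality and gives exactly the claim $\EE[|\xi|] \leq (\EE[|\xi|^\kappa])^{1/\kappa}$.

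For the argument to be completely rigorous, I would first note that the hypothesis $\EE[|\xi|^\kappa] \leq \sigma^\kappa < \infty$ (combined with $\kappa > 1$ and Hölder's or Lyapunov's inequality) guarantees $\EE[|\xi|] < \infty$, so the left-hand side is well-defined; this is a brief one-line remark. The only potential obstacle is making sure the classical Jensen inequality applies, but since $\phi$ is convex on the whole support $[0,\infty)$ of $|\xi|$ and $\EE[|\xi|]$ is finite, the standard statement of Jensen's inequality applies verbatim with no measure-theoretic subtlety.

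No induction, optimization, or probabilistic machinery beyond this single invocation is needed; the entire proof is two lines once the convexity of $\phi$ is observed. I would present it in exactly that order: (i) state convexity of $t \mapsto t^\kappa$ on $[0,\infty)$ for $\kappa \geq 1$; (ii) invoke Jensen; (iii) take the $\kappa$-th root.
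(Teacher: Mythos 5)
Your proof is correct and is the standard argument; the paper does not prove this proposition at all but simply cites it as a known fact (Lyapunov's inequality / monotonicity of $L^p$-norms), so your two-line derivation via convexity of $t \mapsto t^\kappa$ and Jensen is exactly the intended justification.
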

\begin{proposition}[Markov's Inequality]
    For scalar random variable $\xi$ with bounded first moment, the following inequality holds true for any $a > 0$:
    \begin{equation}
        \mathbb{P}(|\xi - \EE[\xi]]| \geq a) \leq \frac{\EE[|\xi|]}{a}. \label{eq: Markov}
    \end{equation}
\end{proposition}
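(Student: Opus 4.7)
The plan is to prove the inequality by case analysis on the size of $a$ relative to $\EE[|\xi|]$. The naive approach of applying the indicator-function trick $a \cdot \mathbb{1}(|Y| \geq a) \leq |Y|$ to the non-negative random variable $Y = |\xi - \EE[\xi]|$ gives only $\PP(|\xi - \EE[\xi]| \geq a) \leq \EE[|\xi - \EE[\xi]|]/a$, and the centered first absolute moment can exceed $\EE[|\xi|]$ in general (consider, for example, $\xi = -M$ with probability $1/M^2$ and $\xi = 0$ otherwise, for large $M$). So the stated right-hand side $\EE[|\xi|]/a$ cannot be reached by centering alone, and the argument has to exploit the sign structure of $\xi$.

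In the first case, when $a \leq \EE[|\xi|]$, the right-hand side $\EE[|\xi|]/a \geq 1$ trivially dominates the probability. In the second case, $a > \EE[|\xi|]$, write $\mu = \EE[\xi]$ and decompose the event
\begin{equation*}
\{|\xi - \mu| \geq a\} = \{\xi \geq \mu + a\} \cup \{\xi \leq \mu - a\}.
\end{equation*}
Jensen's inequality yields $|\mu| \leq \EE[|\xi|] < a$, so both thresholds $\mu + a$ and $a - \mu$ are strictly positive. Letting $\xi_+ = \max(\xi, 0)$ and $\xi_- = \max(-\xi, 0)$, the classical Markov inequality for non-negative random variables gives
\begin{equation*}
\PP(\xi \geq \mu + a) \leq \frac{\EE[\xi_+]}{a + \mu}, \qquad \PP(\xi \leq \mu - a) = \PP(-\xi \geq a - \mu) \leq \frac{\EE[\xi_-]}{a - \mu}.
\end{equation*}
Summing and invoking the identities $\EE[|\xi|] = \EE[\xi_+] + \EE[\xi_-]$ and $\mu = \EE[\xi_+] - \EE[\xi_-]$, the remaining task is the algebraic inequality
\begin{equation*}
\frac{\EE[\xi_+]}{a + \mu} + \frac{\EE[\xi_-]}{a - \mu} \leq \frac{\EE[|\xi|]}{a},
\end{equation*}
which after clearing the positive common denominator $a(a+\mu)(a-\mu)$ reduces, via $\mu(\EE[\xi_+] - \EE[\xi_-]) = \mu^2$, exactly to $a \geq \EE[\xi_+] + \EE[\xi_-] = \EE[|\xi|]$, i.e.\ to the hypothesis of this case.

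The main obstacle is recognizing that the standard one-line derivation via the indicator trick applied to $|\xi - \EE[\xi]|$ is insufficient for the stated right-hand side, and that one must split into positive and negative tails to exploit the cancellation hidden in the decomposition $\mu = \EE[\xi_+] - \EE[\xi_-]$. Once this decomposition is set up and the case $a \leq \EE[|\xi|]$ is dispatched by the trivial bound $\PP \leq 1$, the remaining computation is a routine algebraic verification.
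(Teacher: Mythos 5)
Your proof is correct, but there is nothing in the paper to compare it against: the paper states this proposition without proof, listing it among standard facts cited to a textbook. What you have actually done is more interesting than that citation suggests. You correctly observed that the literal statement is \emph{not} the textbook Markov inequality: the left-hand side involves the centered variable while the right-hand side involves the uncentered moment $\EE[|\xi|]$, and the one-line indicator argument only yields $\EE[|\xi-\EE[\xi]|]/a$, which is incomparable with $\EE[|\xi|]/a$ (your two-point example showing $\EE[|\xi-\EE[\xi]|] > \EE[|\xi|]$ is valid). Your tail-splitting argument — dispatching $a\le\EE[|\xi|]$ trivially, then bounding the two tails by $\EE[\xi_+]/(a+\mu)$ and $\EE[\xi_-]/(a-\mu)$ and reducing the resulting algebraic inequality to $(a-\EE[\xi_+]-\EE[\xi_-])\mu^2\ge 0$ — checks out in every step, so the proposition as literally written is indeed true. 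That said, you should be aware that where the paper actually invokes this proposition, in inequality \eqref{eq:Prob sign not eq simple}, the numerator used is $\EE[|\nabla f(x^k)_i-g^k_i|]$, i.e.\ the centered absolute moment: the authors are really using the trivial $\PP(|\xi-\EE[\xi]|\ge a)\le\EE[|\xi-\EE[\xi]|]/a$, and the $\EE[|\xi|]$ on the right-hand side of \eqref{eq: Markov} is almost certainly a typo for $\EE[|\xi-\EE[\xi]|]$. So your proof establishes a genuinely different (and harder) statement than the one the paper needs; for the downstream argument the one-line version suffices.
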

To prove the HP bounds with the logarithmic dependency, we use the following measure concentration result (see, for example, \citep[Lemma $1$]{li2020high}.
\begin{lemma}[Measure Concentration Lemma]\label{lem: bernstein ineq}
    Let  $\{D_k\}_{k = 1}^T$ be a martingale difference sequence (MDS), i.e., $\EE[D_k| D_{k-1}, \dots, D_1] = 0$ for all $k \in \overline{1,T}$. Furthermore, for each $k \in \overline{1,T}$, there exists positive $\sigma_k \in \R$, s.t. $\EE\left[\exp\left(\frac{D_k^2}{\sigma_k^2}\right)| k\right] \leq e.$ Then the following probability bound holds true:
    \begin{equation} \label{eq: bernstein ineq}
        \forall \lambda > 0, \delta \in (0,1): \quad \mathbb{P}\left(\sum\limits_{k=1}^T D_k \leq \frac{3}{4}\lambda \sum \limits_{k=1}^T \sigma_k^2  + \frac1\lambda \log(\nicefrac{1}{\delta})\right) \geq 1  - \delta.
    \end{equation}
\end{lemma}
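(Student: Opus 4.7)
The plan is to prove the stated concentration inequality by the classical exponential moment (Chernoff) method combined with iterated conditioning. Throughout, let $\mathcal{F}_k := \sigma(D_1,\dots,D_k)$ denote the natural filtration, so that the MDS assumption reads $\EE[D_k\mid\mathcal{F}_{k-1}]=0$ and the second hypothesis reads $\EE[\exp(D_k^2/\sigma_k^2)\mid\mathcal{F}_{k-1}]\le e$.

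The key intermediate step is a conditional MGF bound: for every $\lambda\in\mathbb{R}$ and every $k$,
\begin{equation}
\EE\!\left[\exp(\lambda D_k)\,\big|\,\mathcal{F}_{k-1}\right]\;\le\;\exp\!\left(\tfrac{3}{4}\lambda^2\sigma_k^2\right). \notag
\end{equation}
To obtain this, I would split into two regimes. For $|\lambda|\sigma_k$ small (say $\le 1$), I would use the Taylor expansion of $\exp(\lambda D_k)$, the centering $\EE[D_k\mid\mathcal{F}_{k-1}]=0$ to eliminate the linear term, and then bound the higher moments $\EE[|D_k|^n\mid\mathcal{F}_{k-1}]$ by $n!\,\sigma_k^n/2$ or similar, extracted from the sub-Gaussian-square hypothesis via $|x|^n\le n!\,(x^2/\sigma_k^2)^{n/2}\sigma_k^n$ and $(x^2/\sigma_k^2)^{n/2}\le (n/2)!\exp(x^2/\sigma_k^2)$. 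For the large-$\lambda$ regime, I would instead use Young's inequality $\lambda x\le \tfrac{\lambda^2\sigma_k^2}{4t}+t x^2/\sigma_k^2$ with a suitable choice of $t$ and combine with the assumption $\EE[\exp(D_k^2/\sigma_k^2)\mid\mathcal{F}_{k-1}]\le e$. Matching constants across the two regimes yields the factor $3/4$. This is the main technical obstacle: getting the constant exactly $3/4$ (rather than some other absolute constant) requires a careful interpolation, but it is a standard sub-Gaussian calculation.

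Given this conditional MGF bound, I would introduce the telescoping product $M_T := \exp\!\bigl(\lambda\sum_{k=1}^T D_k - \tfrac{3}{4}\lambda^2\sum_{k=1}^T\sigma_k^2\bigr)$ and show by the tower property that $\EE[M_T]\le 1$:
\begin{align*}
\EE[M_T] &= \EE\!\left[M_{T-1}\cdot\exp(\lambda D_T - \tfrac{3}{4}\lambda^2\sigma_T^2)\right]\\
&= \EE\!\left[M_{T-1}\cdot\EE\!\left[\exp(\lambda D_T)\mid\mathcal{F}_{T-1}\right]\cdot\exp(-\tfrac{3}{4}\lambda^2\sigma_T^2)\right]\\
&\le \EE[M_{T-1}],
\end{align*}
and inductively $\EE[M_T]\le 1$.

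Finally, I would apply Markov's inequality \eqref{eq: Markov} to $M_T$: for any threshold $t>0$,
\begin{equation}
\mathbb{P}\!\left(\sum_{k=1}^T D_k \ge \tfrac{3}{4}\lambda\sum_{k=1}^T\sigma_k^2 + \tfrac{t}{\lambda}\right)
= \mathbb{P}\!\left(M_T\ge e^{t}\right)\le e^{-t}. \notag
\end{equation}
Setting $t=\log(1/\delta)$ gives the desired bound \eqref{eq: bernstein ineq}, completing the proof. The only delicate ingredient is the derivation of the factor $3/4$ in the conditional MGF bound; the concentration step itself is routine.
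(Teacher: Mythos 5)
The paper does not actually prove this lemma — it is imported verbatim from \cite{li2020high} (their Lemma 1) — and your proposal reconstructs precisely the standard argument behind that citation: a conditional MGF bound $\EE[\exp(\lambda D_k)\mid \mathcal{F}_{k-1}]\le\exp(\tfrac34\lambda^2\sigma_k^2)$ obtained by splitting on the size of $|\lambda|\sigma_k$, a telescoping supermartingale bound $\EE[M_T]\le 1$ via the tower property, and Markov's inequality with $t=\log(1/\delta)$; the concentration step and the final probability statement check out exactly. The only soft spot is your small-$|\lambda|$ regime: a term-by-term Taylor/moment bound is more cumbersome than necessary and does not obviously land on the constant $3/4$; the clean route is the pointwise inequality $e^{x}\le x+e^{9x^{2}/16}$ followed by Jensen's inequality for the concave map $u\mapsto u^{9\lambda^2\sigma_k^2/16}$ (valid when $|\lambda|\sigma_k\le 4/3$), which matches your Young's-inequality argument in the regime $|\lambda|\sigma_k\ge 4/3$ and yields exactly the factor $3/4$. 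With that substitution the proof is complete and identical in spirit to the cited one.
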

To control error reduction during batching, we use the following batching lemma for HT variables. Its modern proof for $d = 1$ was proposed in \citep[Lemma $4.2$]{cherapanamjeri2022optimal} and then generalized for the multidimensional case  in \cite{kornilov2024accelerated, hubler2024gradient}.
\begin{lemma}[HT Batching Lemma]\label{lem: batching p}
    Let $\kappa \in (1,2]$, and $X_1, \dots, X_B \in \R^d$ be a martingale  difference sequence (MDS), i.e., $\EE[X_i| X_{i-1}, \dots, X_1] = 0$ for all $i \in \overline{1,B}$.  If all variables $X_i$ have bounded $\kappa-$th moment, i.e., $\EE[\|X_i\|_2^\kappa] < +\infty,$ then the following bound holds true
    \begin{eqnarray}
        \EE\left[\left|\left|\frac{1}{B}\sum_{i=1}^B X_i \right|\right|_2^\kappa\right]\leq \frac{2}{B^\kappa} \sum_{i=1}^B \EE[ \|X_i\|^\kappa_2]. \label{eq: HT batching}
    \end{eqnarray}
\end{lemma}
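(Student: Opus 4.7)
The target is a von Bahr–Esseen–type bound for $\ell_2$-valued martingale differences with sharp-up-to-a-constant dependence on $B$. The strategy I would pursue is a telescoping induction on $B$ driven by a pointwise ``uniform-smoothness'' inequality for $\phi(x)=\|x\|_2^{\kappa}$ on the Hilbert space $\R^d$. Let $S_k=\sum_{i=1}^{k}X_i$ and $\mathcal{F}_k=\sigma(X_1,\dots,X_k)$. Since $\|B^{-1}S_B\|_2^{\kappa}=B^{-\kappa}\|S_B\|_2^{\kappa}$, it suffices to prove $\EE[\|S_B\|_2^{\kappa}]\le 2\sum_{i=1}^{B}\EE[\|X_i\|_2^{\kappa}]$.

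\textbf{Key pointwise inequality.} I would first establish that for every $\kappa\in(1,2]$ and all $x,y\in\R^d$,
\begin{equation}
\|x+y\|_2^{\kappa}\;\le\;\|x\|_2^{\kappa}+\kappa\,\|x\|_2^{\kappa-2}\la x,y\ra+2\,\|y\|_2^{\kappa},\label{eq:uscalar}
\end{equation}
with the convention that the middle term vanishes when $x=0$. Because both sides depend on $(x,y)$ only through $\|x\|$, $\|y\|$ and $\la x,y\ra$, I would reduce to the plane spanned by $x,y$ and set $t=\|y\|/\|x\|$, $c=\la x,y\ra/(\|x\|\,\|y\|)\in[-1,1]$; dividing through by $\|x\|^{\kappa}$ turns \eqref{eq:uscalar} into the scalar inequality $(1+2tc+t^{2})^{\kappa/2}\le 1+\kappa tc+2t^{\kappa}$ for $t\ge 0$, $c\in[-1,1]$, $\kappa\in(1,2]$. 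I would verify this by fixing $t$ and noting that the left side is concave in $c$ while the right side is affine, so it suffices to check the two endpoints $c=\pm 1$; at $c=1$ the claim is $(1+t)^{\kappa}\le 1+\kappa t+2t^{\kappa}$, which follows from $(1+t)^{\kappa}\le 1+\kappa t$ on $[0,1]$ (concavity of $s\mapsto s^{\kappa}$ won't suffice, so I would use the convexity bound $(1+t)^{\kappa}\le 1+\kappa t\cdot 2^{\kappa-1}+\dots$ — more cleanly, split into $t\le 1$ and $t\ge 1$ and use Bernoulli / $(1+t)^{\kappa}\le 2^{\kappa-1}(1+t^{\kappa})$), and at $c=-1$ the claim $|1-t|^{\kappa}\le 1-\kappa t+2t^{\kappa}$ is verified analogously by a one-dimensional calculus check.

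\textbf{Inductive step and conclusion.} With \eqref{eq:uscalar} in hand, take conditional expectation given $\mathcal{F}_{k-1}$ with the substitution $x=S_{k-1}$, $y=X_k$. The vector $S_{k-1}$ is $\mathcal{F}_{k-1}$-measurable, so the cross term satisfies
\begin{equation*}
\EE\!\left[\kappa\,\|S_{k-1}\|_2^{\kappa-2}\la S_{k-1},X_k\ra\,\big|\,\mathcal{F}_{k-1}\right]=\kappa\,\|S_{k-1}\|_2^{\kappa-2}\la S_{k-1},\EE[X_k\mid\mathcal{F}_{k-1}]\ra=0
\end{equation*}
by the MDS hypothesis. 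Taking full expectations and applying the tower property yields $\EE[\|S_k\|_2^{\kappa}]\le\EE[\|S_{k-1}\|_2^{\kappa}]+2\,\EE[\|X_k\|_2^{\kappa}]$. Iterating from $k=1$ to $B$ (with $S_0=0$) gives $\EE[\|S_B\|_2^{\kappa}]\le 2\sum_{i=1}^{B}\EE[\|X_i\|_2^{\kappa}]$, and dividing by $B^{\kappa}$ produces the stated bound.

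\textbf{Main obstacle.} The whole argument rests on \eqref{eq:uscalar} with the explicit constant $2$ — this is the only non-routine step. The $c=\pm 1$ endpoint checks are the delicate part because the exponent $\kappa$ lies strictly between $1$ and $2$, so neither the Hilbert parallelogram identity nor simple convexity is directly available; I would either push the one-variable calculus through by splitting $t\le 1$ vs.\ $t\ge 1$ and using $(1+t)^{\kappa}\le 1+\kappa t+t^{\kappa}\cdot 2^{\kappa-1}\le 1+\kappa t+2t^{\kappa}$, or, as in Cherapanamjeri et al.\ (Lemma 4.2), first prove the scalar case and then use the $\la S_{k-1},X_k\ra$-expansion to reduce the vector case to it. Once \eqref{eq:uscalar} is secured, the rest is essentially bookkeeping via the martingale structure.
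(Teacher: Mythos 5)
The paper does not actually prove this lemma: it cites \citep[Lemma 4.2]{cherapanamjeri2022optimal} for $d=1$ and the multidimensional generalizations in \cite{kornilov2024accelerated, hubler2024gradient}. Your outline — a von Bahr--Esseen pointwise inequality $\|x+y\|_2^{\kappa}\le\|x\|_2^{\kappa}+\kappa\|x\|_2^{\kappa-2}\la x,y\ra+2\|y\|_2^{\kappa}$, followed by conditioning to kill the cross term and telescoping — is exactly the route those references take, and the final inequality is true. So the skeleton is right.

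However, your verification of the key inequality \eqref{eq:uscalar} has a genuine logical error. You argue that because $c\mapsto(1+2tc+t^{2})^{\kappa/2}$ is concave and the right side is affine, it suffices to check $c=\pm1$. That is backwards: the difference (concave minus affine) is concave, so its \emph{maximum} over $[-1,1]$ can sit in the interior, and endpoint checks establish nothing. Indeed, for $t\le 2$ the maximum of $\text{LHS}-\text{RHS}$ is attained at the interior critical point $c=-t/2$, where $1+2tc+t^{2}=1$ and the requirement becomes $\kappa t^{2}/2\le 2t^{\kappa}$, i.e.\ $t^{2-\kappa}\le 4/\kappa$ — true for $t\le 2$ and $\kappa\in[1,2]$, but this is precisely the check your reduction skips. (For $t>2$ the critical point leaves $[-1,1]$ and the maximum does fall at $c=-1$.) Separately, your proposed endpoint bound at $c=1$, namely $(1+t)^{\kappa}\le 2^{\kappa-1}(1+t^{\kappa})\le 1+\kappa t+2t^{\kappa}$, fails for small $t$: the middle quantity tends to $2^{\kappa-1}>1$ while the right side tends to $1$. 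A working argument is to set $\psi(t)=1+\kappa t+2t^{\kappa}-(1+t)^{\kappa}$, note $\psi(0)=0$ and $\psi'(t)=\kappa\bigl[1+2t^{\kappa-1}-(1+t)^{\kappa-1}\bigr]\ge\kappa t^{\kappa-1}\ge 0$ by subadditivity of $s\mapsto s^{\kappa-1}$. With the interior critical point handled and the endpoint checks repaired, the conditioning and telescoping steps you describe are correct and complete the proof.
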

We generalize the following lemma about changes after one update step of sign-based momentum methods from \citep[Lemma $1$]{sun2023momentum}.

\begin{lemma}[Sign Update Step Lemma] \label{lem: single update step} Let $x, m \in \R^d$ be arbitrary vectors, $A = \text{diag}(a_1, \dots, a_d)$ be diagonal matrix and $f$ be $(L_0,  L_1)$-smooth function (As.~\ref{as: smooth}). Then for the update step  
$$x' = x - \gamma \cdot  A \cdot  \sign(m)$$
with $\epsilon := m - \nabla f(x)$, the following inequality holds true
\begin{equation}
    f(x') - f(x) \leq - \gamma \|A\nabla f(x)\|_1 + 2\gamma\|A\|_F\|\epsilon\|_2 + \frac{L_0 + L_1\|A  \nabla f(x^k)\|_2 }{2} \exp{(\gamma L_1\|A\|_F)}\gamma^2\|A\|^2_F . \label{eq: single sign update}
\end{equation}
\end{lemma}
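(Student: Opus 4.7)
The plan is to apply the $(L_0,L_1)$-smoothness descent inequality from Lemma \ref{lem: L_0,L_1 smoothness} to the pair $(x, x')$ and then control the first-order term by a coordinate-wise sign-mismatch argument. To set up the descent bound, I first note that since $|\sign(m_i)| \leq 1$ for every coordinate, the displacement satisfies $\|x' - x\|_2 = \gamma\|A\sign(m)\|_2 \leq \gamma\|A\|_F$. Substituting this into the descent lemma yields
\begin{equation*}
f(x') - f(x) \;\leq\; -\gamma \langle \nabla f(x), A\sign(m)\rangle + \frac{L_0 + L_1\|\nabla f(x)\|_2}{2}\exp(\gamma L_1\|A\|_F)\gamma^2\|A\|_F^2.
\end{equation*}

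Next, I would bound $-\langle \nabla f(x), A\sign(m)\rangle$ via the following sign-mismatch decomposition. Write $g := \nabla f(x)$ and partition the index set $\overline{1,d}$ into those coordinates where $\sign(a_i m_i) = \sign(a_i g_i)$ (\emph{aligned}) and those where they differ (\emph{misaligned}); signs of $a_i$ can be folded into $\sign(m_i)$ because $a_i\sign(m_i) = |a_i|\sign(a_i m_i)$. Each aligned index contributes exactly $|a_i g_i|$ to $\langle g, A\sign(m)\rangle$, while each misaligned index contributes $-|a_i g_i|$, so that
\begin{equation*}
\langle g, A\sign(m)\rangle \;=\; \|Ag\|_1 - 2\sum_{i \in \mathrm{mis}} |a_i g_i|.
\end{equation*}

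The third step is to observe that on any misaligned index the noise must dominate the gradient in that coordinate: since $m_i = g_i + \epsilon_i$ has opposite sign to $g_i$ (after accounting for the sign of $a_i$), we have $|\epsilon_i| \geq |g_i|$. Hence $|a_i g_i| \leq |a_i||\epsilon_i|$, and Cauchy--Schwarz yields
\begin{equation*}
\sum_{i \in \mathrm{mis}} |a_i g_i| \;\leq\; \sum_{i=1}^d |a_i||\epsilon_i| \;\leq\; \|A\|_F\|\epsilon\|_2.
\end{equation*}
Plugging this into the descent bound gives the advertised inequality \eqref{eq: single sign update}, with a cosmetic replacement of $\|A\nabla f(x)\|_2$ by $\|\nabla f(x)\|_2$ in the quadratic term (which is the form one naturally reads off from Lemma \ref{lem: L_0,L_1 smoothness}).

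The main obstacle is the sign-mismatch step: the bookkeeping with arbitrary signs of the diagonal entries $a_i$ together with the sign of $m_i$ has to be handled carefully so that the bad set is exactly the set of indices where $|\epsilon_i| \geq |g_i|$, which is what makes the Cauchy--Schwarz bound against $\|\epsilon\|_2$ work. Everything else --- invoking the $(L_0,L_1)$-descent inequality and bounding $\|x'-x\|_2$ via $\|A\|_F$ --- is routine once the displacement is recognized as a sign vector scaled by $A$.
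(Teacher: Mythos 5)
Your proposal is correct and follows essentially the same route as the paper's own proof: the $(L_0,L_1)$-descent inequality with $\|x'-x\|_2\leq\gamma\|A\|_F$, the aligned/misaligned coordinate decomposition of $\langle\nabla f(x),A\sign(m)\rangle$, the observation that $|\epsilon_i|\geq|g_i|$ on misaligned coordinates, and Cauchy--Schwarz against $\|A\|_F\|\epsilon\|_2$ (your extra care with the signs of the $a_i$ is a minor refinement over the paper, which implicitly takes $a_i\geq 0$). The discrepancy you flag in the quadratic term is real but is present in the paper as well --- its proof derives the factor $L_0+L_1\|\nabla f(x)\|_2$ and then states the lemma with $L_0+L_1\|A\nabla f(x)\|_2$, so you have matched the argument exactly.
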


\begin{proof} 
Using $(L_0, L_1)$-smoothness of $f$ (Lemma \ref{lem: L_0,L_1 smoothness}) between points $x$ and  $x'$, we have
\[
f(x') \leq f(x) + \langle \nabla f(x), x' - x \rangle + \frac{L_0 + L_1\|x' - x\|}{2} \|x' - x\|^2 \exp(L_1\|x' - x\|).
\]
Substitute $x' - x = -\gamma A \sign(m)$ gives us:
\[
\langle \nabla f(x), x' - x \rangle = -\gamma \langle \nabla f(x), A \sign(m) \rangle.
\]
Next, we decompose the inner product:
\[
\langle \nabla f(x), A \sign(m) \rangle = \langle \nabla f(x), A \sign(\nabla f(x)) \rangle + \langle \nabla f(x), A(\sign(m) - \sign(\nabla f(x))) \rangle.
\]
We use the identity:
\[
\langle \nabla f(x), A \sign(\nabla f(x)) \rangle = \|A \nabla f(x)\|_1,
\]
and define $[\nabla f(x)]_i =: g_i$, then the second term becomes
\[
\sum_{i=1}^d a_i g_i \left( \sign(m_i) - \sign(g_i) \right).
\]
Now we analyze two cases for each $i$:
\begin{itemize}
    \item If $\sign(m_i) = \sign(g_i)$, then the term is equal to zero.
    \item Otherwise, $g_i \cdot m_i \leq 0$, hence $|g_i - m_i| \geq |g_i|$, and we have the following with $\epsilon_i := m_i - g_i$:
    \[
    a_i g_i \left( \sign(m_i) - \sign(g_i) \right) \leq 2a_i |g_i| \leq 2a_i |\epsilon_i|.
    \]
\end{itemize}
In total, we have:
\[
\langle \nabla f(x), A \sign(\nabla f(x)) - A \sign(m) \rangle \leq  2 \sum_{i=1}^d a_i |\epsilon_i| \leq  2\|A\|_F\|\epsilon\|_2,
\]
\[
\langle \nabla f(x), x' - x \rangle \leq -\gamma \|A \nabla f(x)\|_1 + 2\gamma \|A\|_F \|\epsilon\|_2.
\]
Finally, we observe that
\[
\|x' - x\| = \gamma \|A \sign(m)\|_2 \leq \gamma \|A\|_F,
\]
and derive the upper bound:
\[
f(x') - f(x) \leq -\gamma \|A \nabla f(x)\|_1 + 2\gamma \|A\|_F \|\epsilon\|_2 + \frac{L_0 + L_1\|A \nabla f(x)\|_2}{2} \exp(\gamma L_1 \|A\|_F) \gamma^2 \|A\|_F^2.
\]
\end{proof}

\subsection{Proof of \algname{SignSGD} General Convergence Lemma \ref{lem: signsgd T update}} \label{subsec: signsgd conv}
For beginning, we prove general lemma about \algname{SignSGD} convergence with HT gradient estimates $g^k$ with $\Vec{\sigma}, \kappa \in (1,2].$ This proof considerably relies on proof techniques for \algname{NSGD} from \cite{hubler2024gradient}.  
\begin{proof}
    Consider the $k$-th step of \algname{SignSGD}. We use $(L_0, L_1)$ smoothness of function $f$ (Lemma \ref{lem: L_0,L_1 smoothness}) to estimate:
    \begin{eqnarray}
        f(x^{k+1}) - f(x^k) &\leq& \la \nabla f (x^k), x^{k+1} - x^k \ra + \frac{L_0 + L_1\|\nabla f(x^k)\|_2 }{2}\exp(L_1\|x^{k+1} - x^k\|_2)\|x^{k+1} - x^k\|_2^2 \notag \\
        &=& - \gamma_k  \frac{\la \nabla f (x^k), \sign(g^k) \ra}{\|\nabla f (x^k)\|_1} \cdot \|\nabla f (x^k)\|_1 + \frac{L_0d \gamma_k^2}{2}\exp(L_1\sqrt{d}\gamma_k) \notag \\
        &+& \frac{ L_1d\gamma_k\exp(L_1\sqrt{d}\gamma_k) }{2}\cdot \gamma_k \|\nabla f(x^k)\|_2 \notag \\
         &\leq& - \gamma_k  \frac{\la \nabla f (x^k), \sign(g^k) \ra}{\|\nabla f (x^k)\|_1} \cdot \|\nabla f (x^k)\|_1 + \frac{L_0d \gamma_k^2}{2}\exp(L_1\sqrt{d}\gamma_k) \notag \\
         &+& \frac{ L_1d\gamma_k\exp(L_1\sqrt{d}\gamma_k) }{2}\cdot \gamma_k \|\nabla f(x^k)\|_1.\notag
    \end{eqnarray}

    Let us choose $\gamma_k \leq \frac{1}{4L_1d}$, then we have 
    $L_1d\gamma_k\exp(L_1\sqrt{d}\gamma_k) \leq \frac14$ and
    \begin{eqnarray}
        f(x^{k+1}) - f(x^k) &\leq&  - \gamma_k  \frac{\la \nabla f (x^k), \sign(g^k) \ra}{\|\nabla f (x^k)\|_1} \cdot \|\nabla f (x^k)\|_1 + L_0d \gamma_k^2 + \frac{\gamma_k}{4}  \|\nabla f(x^k)\|_1. \notag
    \end{eqnarray}

    Consequently, after summing all $T$ steps, we obtain:
    \begin{eqnarray}
        \sum \limits_{k=1}^{T} \gamma_k \left[ \frac{\la \nabla f (x^k), \sign(g^k) \ra}{\|\nabla f (x^k)\|_1} - \frac14\right] \cdot \|\nabla f (x^k)\|_1  \leq \underset{ = \Delta}{\underbrace{f(x^1) - f(x^*)}} + L_0d\sum \limits_{k=1}^T \gamma_k^2.
    \end{eqnarray}
    We introduce the following terms  $\phi_k := \frac{\la \nabla f (x^k), \sign(g^k) \ra}{\|\nabla f (x^k)\|_1} \in [-1,1]$, $\psi_k := \EE[\phi_k| x^{k }]$ and $D_k := - \gamma_k (\phi_k - \psi_k)\|\nabla f (x^k)\|_1$. We note that $D_k$ is a martingale difference sequence ($\EE[D_k|D_{k-1}, \dots, D_k] = 0$) and satisfies 
    $$\exp\left( \frac{D_k^2}{4\gamma_k^2\|\nabla f (x^k)\|_1^2}\right) = \exp \left(\frac{(\phi_k - \psi_k)^2}{4}\right) \leq e.$$
    Applying Measure Concentration Lemma \ref{lem: bernstein ineq} to MSD $D_k$ with $\sigma^2_k = 4 \gamma_k^2 \|\nabla f (x^k)\|_1^2$, we derive the bound for all $\lambda > 0$ with probability at least $1 - \delta$:
    $$\sum\limits_{k=1}^{T} \gamma_k(\psi_k - 3 \lambda \gamma_k \|\nabla f (x^k)\|_1 - \nicefrac{1}{4} ) \|\nabla f (x^k)\|_1 \leq \Delta + L_0d\sum\limits_{k=0}^{T-1} \gamma_k^2 + \frac{1}{\lambda} \log(\nicefrac{1}{\delta}).$$
     We use norm relation \eqref{eq: norm relation} and $(L_0,L_1)$-smoothness to estimate maximum gradient norm for all $k \in \overline{2,T+1}:$
    \begin{eqnarray}
        \|\nabla f (x^k)\|_1/\sqrt{d} &\leq& \|\nabla f (x^k)\|_2 \leq \|\nabla f (x^k) - \nabla f (x^{k-1}) + \nabla f (x^{k-1}) \|_2  \notag \\
    &\leq& \|\nabla f (x^k) - \nabla f (x^{k-1})\|_2 +  \|\nabla f (x^{k-1}) \|_2 \notag \\
    &\leq&  (L_0 + L_1\|\nabla f(x^{k-1})\|_2)\exp(L_1\|x^{k} - x^{k-1}\|_2)\|x^{k} - x^{k-1}\|_2 + \|\nabla f (x^{k-1})\|_2  \notag\\
    &\leq&  (L_0 + L_1\|\nabla f(x^{k-1})\|_2)\exp(L_1\sqrt{d}\gamma_k )\sqrt{d}\gamma_k + \|\nabla f (x^{k-1})\|_2.  \notag
    \end{eqnarray}
    At this point, we take $\gamma_k \leq \frac{1}{48L_1d\log\frac1\delta \sqrt{d}}$ to obtain
    \begin{eqnarray}
        \|\nabla f (x^k)\|_1/\sqrt{d} &\leq& 2L_0\sqrt{d}\gamma_k  + \frac{\|\nabla f (x^{k-1})\|_2}{48d \log \frac1\delta}+ \|\nabla f (x^{k-1})\|_2 \notag \\
    &\leq&   2L_0\sqrt{d}\sum_{\tau=1}^{k-1}\gamma_\tau + \sum_{\tau=1}^{k-1}\frac{\|\nabla f (x^{\tau})\|_2}{48d \log \frac1\delta} + \|\nabla f (x^1)\|_2 \notag \\
    &\leq&   2L_0\sqrt{d}\sum_{\tau=1}^{k-1}\gamma_\tau + \sum_{\tau=1}^{k-1}\frac{\|\nabla f (x^{\tau})\|_1}{48d \log \frac1\delta} + \|\nabla f (x^1)\|_1, \notag \\
    \gamma_k \|\nabla f (x^k)\|_1&\leq&   2L_0d\cdot \gamma_k\sum_{\tau=1}^{k-1}\gamma_\tau + \gamma_k\sum_{\tau=1}^{k-1}\frac{\|\nabla f (x^{\tau})\|_1}{48\sqrt{d} \log \frac1\delta} + \sqrt{d}\gamma_k\|\nabla f (x^1)\|_1. \notag
    \end{eqnarray}
    Since stepsizes $\gamma_k$ are non-increasing, we have
    $$\gamma_k\sum_{\tau=1}^{k-1}\frac{\|\nabla f (x^{\tau})\|_1}{48\sqrt{d} \log \frac1\delta} \leq \sum_{\tau=1}^{k-1}\frac{\gamma_\tau\|\nabla f (x^{\tau})\|_1}{48\sqrt{d} \log \frac1\delta},$$
     $$\gamma_k \|\nabla f (x^k)\|_1\leq   2L_0d\cdot \gamma_k\sum_{\tau=1}^{k-1}\gamma_\tau + \sum_{\tau=1}^{k-1}\frac{\gamma_\tau\|\nabla f (x^{\tau})\|_1}{48\sqrt{d} \log \frac1\delta} + \sqrt{d}\gamma_k\|\nabla f (x^1)\|_1.$$
    Hence, the choice $\lambda := \frac{1}{6d(\gamma^{max} \|\nabla f (x^1)\|_1  + \sum_{k=1}^{T}\frac{\gamma_k\|\nabla f (x^{k})\|_1}{48d\log \frac1\delta} + 2C_TL_0)}$ where $C_T := \max\limits_{k \in \overline{1,T}} \gamma_k \cdot  \sum\limits_{\tau=1}^{k-1}\gamma_\tau$ and $\gamma^{max} := \max\limits_{k \in \overline{1,T}} \gamma_k  $ yields with probability at least $1 - \delta$:
    \begin{eqnarray}
        \sum\limits_{k=1}^T \gamma_k\left(\psi_k - \frac{1}{2} - \frac14 \right)\|\nabla f (x^k)\|_1 &\leq& \Delta + L_0d\sum_{k=1}^T\gamma_k^2 + 6\sqrt{d}(\gamma^{max} \|\nabla f (x^1)\|_1  + 2C_TL_0) \log(\nicefrac{1}{\delta}) \notag \\
        &+& \frac{6}{48}\sum_{k=1}^{T}\gamma_k\|\nabla f (x^{k})\|_1, \notag \\ \sum\limits_{k=1}^T \gamma_k\left(\psi_k - \frac{1}{2} - \frac14  - \frac18\right)\|\nabla f (x^k)\|_1 &\leq& \Delta + L_0d\sum_{k=1}^T\gamma_k^2 + 6\sqrt{d}(\gamma^{max} \|\nabla f (x^1)\|_1  + 2C_TL_0) \log(\nicefrac{1}{\delta}), \notag \label{eq: sign sgd before prob}
    \end{eqnarray}
    Next, we estimate each term $\psi_k \|\nabla f (x^k)\|_1$ in the previous sum:
\begin{eqnarray}
\psi_k \|\nabla f (x^k)\|_1 &=& \EE \left[ \la \nabla f (x^k), \sign(g^k) \ra| x^k \right] \notag \\
&=& \|\nabla f (x^k)\|_1 - \sum_{i=1}^d 2 |\nabla f (x^k)|_i \cdot \mathbb{P}(\sign(\nabla f (x^k))_i\neq \sign(g^k)_i | x^k).\label{eq: line with prob sign}
\end{eqnarray}
For each coordinate, we have a bound derived from Markov's inequality \eqref{eq: Markov}  followed by Jensen’s inequality \eqref{eq: Jensen}:
\begin{eqnarray}
    \mathbb{P}(\sign(\nabla f (x^k))_i \neq \sign(g^k)_i | x^k) &\leq& \mathbb{P}(|\nabla f (x^k)_i -  g^k_i| \geq |\nabla f (x^k)_i| |  x^k)  
    \leq \frac{\EE_{\xi^k}[|\nabla f (x^k)_i - g^k_i| ]}{|\nabla f (x^k)_i|} \notag \\ &\leq& \frac{(\EE_{\xi^k}[|\nabla f (x^k)_i - g^k_i|^\kappa ])^\frac{1}{\kappa}}{|\nabla f (x^k)_i|}  \leq \frac{\sigma_{k,i} }{|\nabla f (x^k)_i|}. \label{eq:Prob sign not eq simple}
\end{eqnarray}
Hence, the whole sum can be bounded as 
\begin{eqnarray}
    \sum_{i=1}^d 2 |\nabla f (x^k)|_i \cdot \mathbb{P}(\sign(\nabla f (x^k))_i\neq \sign(g^k)_i | x^k)
    &\leq& 2 \|\Vec{\sigma}_k\|_1. \notag \notag 
\end{eqnarray}
Finally, we put this bound in \eqref{eq: sign sgd before prob} and obtain:
   \begin{eqnarray}
       \frac{1}{16} \sum\limits_{k=1}^T \gamma_k\|\nabla f (x^k)\|_1 &\leq& \Delta + L_0d\sum_{k=1}^T\gamma_k^2 + 2\sum_{k=1}^T\gamma_k \|\Vec{\sigma}_k\|_1 \notag \\
       &+& 6d(\gamma^{max} \|\nabla f (x^1)\|_1  + 2C_TL_0) \log(\nicefrac{1}{\delta}). \label{eq: proof signsgd conv main eq}
   \end{eqnarray}
\end{proof}

\subsection{Proof of \algname{minibatch-SignSGD} Complexity Theorem \ref{thm:minibatch SignSGD}} \label{subsec: minibatch signsgd proof}
The proof of Theorem \ref{thm:minibatch SignSGD} is divided into two parts: for finite horizon with optimal tuning (Theorem \ref{thm: minibatch signsgd finite}) and for infinite horizon with arbitrary tuning (Theorem \ref{thm: minibatch signsgd inifinite}).
\begin{theorem}[\textbf{HP complexity for \algname{minibatch-SignSGD}, finite horizon, full version}] \label{thm: minibatch signsgd finite}
Consider lower-bounded $(L_0,L_1)$-smooth function $f$ (As. \ref{as: bounded}, \ref{as: smooth}) and HT gradient estimates (As. \ref{as: pBCM}). Then Alg. \ref{alg:minibatch-signSGD} requires the sample complexity $N$  to achieve $\frac{1}{T} \sum_{k=1}^{T}  \|\nabla f(x^k)\|_1 \leq \varepsilon$ with probability at least $1-\delta$ for:

\textbf{Optimal tuning for $\varepsilon \geq \frac{8L_0}{L_1\sqrt{d}}$:}  $T = O\left(\frac{\Delta  L^\delta_1 d^\frac{3}{2} }{\varepsilon}\right), \gamma_k \equiv \frac{1}{48 L_1^\delta d^\frac32} , B_k \equiv  \left(\frac{16\|\Vec{\sigma}\|_1}{\varepsilon}\right)^\frac{\kappa}{\kappa-1}:$
\begin{equation}
    N = O\left(\frac{\Delta L_1^\delta   d^\frac{3}{2}}{\varepsilon}\left[1 +  \left(\frac{\|\Vec{\sigma}\|_1}{\varepsilon}\right)^\frac{\kappa}{\kappa-1}\right]\right), \notag 
\end{equation}
\textbf{Optimal tuning for $\varepsilon \leq \frac{8L_0}{L_1\sqrt{d}}$:} $T = O\left(\frac{\Delta  L_0^\delta d }{\varepsilon^2}\right), \gamma_k \equiv \sqrt{\frac{\Delta}{20 L_0^\delta dT}} , B_k \equiv  \left(\frac{16\|\Vec{\sigma}\|_1}{\varepsilon}\right)^\frac{\kappa}{\kappa-1}:$ 
\begin{equation}
   N = O\left(\frac{\Delta  L_0^\delta d }{\varepsilon^2}\left[1 +  \left(\frac{\|\Vec{\sigma}\|_1}{\varepsilon}\right)^\frac{\kappa}{\kappa-1}\right]\right),  \notag 
\end{equation}
\textbf{Arbitrary tuning for $\varepsilon \geq \frac{8L_0}{L_1\sqrt{d}}$:}  $T, \gamma_k \equiv \gamma_0 \leq \frac{1}{48L_1^\delta d^\frac32 },  B_k \equiv \max \{1, B_0T^2\}$:
\begin{equation}
    N = O\left(B_0\left(\frac{\Delta} {\varepsilon\gamma_0} \right)^3 + \frac{1}{B_0^2}\left(\frac{\|\Vec{\sigma}\|_1}{\varepsilon}\right)^\frac{3\kappa}{2(\kappa-1)}\right), \notag 
\end{equation}
\textbf{Arbitrary tuning for $\varepsilon \leq \frac{8L_0}{L_1\sqrt{d}}$:}  $T, \gamma_k \equiv \frac{\gamma_0}{\sqrt{T}}, B_k \equiv \max \{1, B_0T\}$:
\begin{equation}
    N = O\left(\frac{B_0(\nicefrac{\Delta}{\gamma_0} + L_0^\delta d \gamma_0)^4} {\varepsilon^4} + \frac{1}{B_0}\left(\frac{\|\Vec{\sigma}\|_1}{\varepsilon}\right)^\frac{2\kappa}{\kappa-1}\right), \notag 
\end{equation} 
where $\Delta = f(x^1) - f^*, L_0^\delta = L_0 \log(\nicefrac{1}{\delta}), L_1^\delta = L_1 \log(\nicefrac{1}{\delta}).$
\end{theorem}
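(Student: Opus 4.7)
The plan is to reduce Theorem~\ref{thm: minibatch signsgd finite} to an application of the general \algname{SignSGD} Convergence Lemma~\ref{lem: signsgd T update} combined with the Heavy-Tailed Batching Lemma~\ref{lem: batching p}, followed by a case-by-case choice of $\gamma_k, B_k, T$. First, the minibatch estimator $\bar g^k = \tfrac{1}{B_k}\sum_{i=1}^{B_k}\nabla f(x^k,\xi^k_i)$ is, coordinate-wise, an average of $B_k$ centred i.i.d.\ variables with $\kappa$-th moment $\sigma_i^\kappa$. Applying Lemma~\ref{lem: batching p} in each coordinate, the effective noise parameters $\sigma_{k,i}$ that enter Lemma~\ref{lem: signsgd T update} satisfy $\sigma_{k,i} \leq 2^{1/\kappa}\sigma_i / B_k^{(\kappa-1)/\kappa}$, so $\|\Vec{\sigma}_k\|_1 \leq 2\|\Vec{\sigma}\|_1 / B_k^{(\kappa-1)/\kappa}$.

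Substituting this into \eqref{eq: signsgd convergence lemma} and dividing by $\sum_k \gamma_k$, I obtain a master bound of the form
\begin{equation*}
\tfrac{1}{16\sum_k \gamma_k}\sum_k \gamma_k \|\nabla f(x^k)\|_1 \;\lesssim\; \tfrac{\Delta + 6d\gamma_1\|\nabla f(x^1)\|_1\log(1/\delta)}{\sum_k\gamma_k} + \tfrac{L_0 d(\sum_k \gamma_k^2 + C_T\log(1/\delta))}{\sum_k \gamma_k} + \max_k \tfrac{\|\Vec{\sigma}\|_1}{B_k^{(\kappa-1)/\kappa}},
\end{equation*}
which is the common starting point for all four sub-cases. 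The initial-gradient constant is controlled via $(L_0,L_1)$-smoothness between $x^1$ and its nearest critical point.

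Case-by-case: (i) for optimal tuning with $\varepsilon \geq 8L_0/(L_1\sqrt{d})$, pick the largest admissible stepsize $\gamma\equiv 1/(48 L_1^\delta d^{3/2})$; then $C_T = T\gamma^2$, and the curvature contribution collapses to $\mathcal{O}(L_0/(L_1\sqrt{d}))$ which is $\lesssim \varepsilon$ by the threshold hypothesis. It remains to make $\Delta/(T\gamma)\lesssim\varepsilon$ and $\|\Vec{\sigma}\|_1/B^{(\kappa-1)/\kappa}\lesssim\varepsilon$, giving $T \asymp \Delta L_1^\delta d^{3/2}/\varepsilon$ and $B \asymp (\|\Vec{\sigma}\|_1/\varepsilon)^{\kappa/(\kappa-1)}$, whence $N = TB$ matches the claim. (ii) For $\varepsilon \leq 8L_0/(L_1\sqrt{d})$, balance $\Delta/(T\gamma)$ against $L_0^\delta d\gamma$ by $\gamma = \sqrt{\Delta/(L_0^\delta d T)}$; the resulting $T \asymp \Delta L_0^\delta d / \varepsilon^2$ automatically guarantees $\gamma \leq 1/(48 L_1^\delta d^{3/2})$ precisely because of the threshold, and the same choice of $B$ finishes the case. (iii, iv) For arbitrary tuning, I weaken the metric to $\min_{k\leq T}\|\nabla f(x^k)\|_1$, which is dominated by the weighted average. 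In the first ($L_1$-limited) phase use $\gamma_k\equiv\gamma_0$ with $B_k \equiv B_0 T^2$; in the second ($L_0$-limited) phase use $\gamma_k = \gamma_0/\sqrt{T}$ with $B_k \equiv B_0 T$. Substituting into the master inequality and solving for the smallest $T$ that drives the right-hand side below $\varepsilon$, and then computing $N = \sum_k B_k$, yields the stated polynomial dependence on $B_0,\gamma_0,\Delta,L_0,L_1,\|\Vec{\sigma}\|_1$.

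The main obstacle is careful bookkeeping of the $d$, $\log(1/\delta)$ and $C_T$ factors so that the threshold $\varepsilon = 8L_0/(L_1\sqrt d)$ naturally separates the fast regime (where $\gamma$ hits its $L_1$-cap and the $L_0$ term is cheap) from the slow one (where $\gamma$ is balanced against $\Delta$ and the $L_1$-cap is automatic). A minor technicality is the single additive term $6d\gamma_1\|\nabla f(x^1)\|_1\log(1/\delta)$ which does not scale with $T$: it is either absorbed into a $\Delta$-like constant via generalized smoothness at $x^1$, or shown to decay as $O(1/T)$ after dividing by $\sum_k \gamma_k$ and hence only slightly inflates the constants in $T$.
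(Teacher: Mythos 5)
Your proposal is correct and follows essentially the same route as the paper: apply the Batching Lemma to reduce the per-iteration noise to $2\|\Vec{\sigma}\|_1/B_k^{(\kappa-1)/\kappa}$, substitute into the \algname{SignSGD} Convergence Lemma with $C_T=T\gamma^2$, drop the $O(1/T)$ initial-gradient term, and then tune $\gamma$, $B$, $T$ case by case exactly as you describe (largest admissible $\gamma$ in the $L_1$-limited regime, $\gamma\asymp\sqrt{\Delta/(L_0^\delta dT)}$ balanced against the threshold in the $L_0$-limited regime, and $\min_k$ metric for arbitrary tuning). No substantive differences from the paper's argument.
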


\begin{proof}
    Plugging in constant stepsizes $\gamma_k \equiv \gamma \leq \frac{1}{48L_1d\log\frac1\delta \sqrt{d}}$ in \eqref{eq: proof signsgd conv main eq} implies $C_T = T\gamma^2, \gamma^{max} = \gamma$:
$$\frac1T \sum\limits_{k=1}^{T} \|\nabla f (x^k)\|_1 \leq \frac{4\Delta}{T\gamma} + 80 L_0 d\gamma \log(\nicefrac{1}{\delta})  + 8 \|\Vec{\sigma}_k\|_1 + 24\frac{d\|\nabla f (x^1)\|_1}{T}  \log(\nicefrac{1}{\delta}).$$
Due to Batching Lemma \ref{lem: batching p}, we can estimate the $\kappa-$th moment of the batched estimate for constant batchsizes $B_k  \equiv B$ as
   $\|\Vec{\sigma}_k\|_1 \leq \frac{2\|\Vec{\sigma}\|_1}{B^{\frac{\kappa-1}{\kappa}}}$  and  derive:
\begin{equation}\label{eq: minibatch signsgd bound before params}
    \frac1T \sum\limits_{k=1}^{T} \|\nabla f (x^k)\|_1 \leq \frac{4\Delta}{T\gamma} + 80 L_0d\gamma \log(\nicefrac{1}{\delta})  + 8 \frac{\|\Vec{\sigma}\|_1}{B^{\frac{\kappa-1}{\kappa}}} + 24\frac{d\|\nabla f (x^1)\|_1}{T}  \log(\nicefrac{1}{\delta}). \notag 
\end{equation}
We can omit the last term since its dependency on $T$ has the largest power.

\textbf{Case $\varepsilon \geq  \frac{8L_0}{L_1\sqrt{d}}$, arbitrary tuning:} We use parameters $T, \gamma_k = \gamma_0, B_k = \max \{1, B_0T^2\}$ to get:
$$\frac1T \sum\limits_{k=1}^{T} \|\nabla f (x^k)\|_1 \leq \frac{4\Delta }{T\gamma_0}  + \varepsilon +  8 \frac{\|\Vec{\sigma}\|_1}{B_0^{\frac{\kappa-1}{\kappa}} T^{\frac{2(\kappa-1)}{\kappa}}} + 24\frac{d\|\nabla f (x^1)\|_1}{T}  \log(\nicefrac{1}{\delta}).$$
 Setting such $T$ that the first two terms become less than $\varepsilon$, we obtain the final complexity $N = T \cdot B_0T^2.$

\textbf{Case $\varepsilon \geq  \frac{8L_0}{L_1\sqrt{d}}$, optimal tuning:} We use stepsize $\gamma = \frac{1}{48L_1d\log\frac1\delta \sqrt{d}} \Rightarrow 80 L_0 d\gamma \log(\nicefrac{1}{\delta}) \leq \varepsilon/2$ and  batchsize $8\frac{\|\Vec{\sigma}\|_1}{B^{\frac{\kappa-1}{\kappa}}} \leq \varepsilon/2 \Rightarrow B_k \equiv \max \left\{1,  \left(\frac{16\|\Vec{\sigma}\|_1}{\varepsilon}\right)^\frac{\kappa}{\kappa-1}\right\}$. The number of iterations  $T$ is chosen to bound the first term: 
$$\frac{4\Delta}{T\gamma}  = \frac{192\Delta L_1\log \frac1\delta d^\frac32}{T} \leq \frac{\varepsilon}{2} \Rightarrow T = O\left(\frac{\Delta L_1  \log \frac1\delta d^\frac{3}{2}}{\varepsilon}\right).$$
The total number of oracle calls is:
\begin{eqnarray}
    \varepsilon &\geq&   \frac{8L_0}{L_1\sqrt{d}} \quad \Rightarrow \quad N = O\left(\frac{\Delta L_1  \log(\nicefrac{1}{\delta} ) d^\frac{3}{2}}{\varepsilon}\left[1 +  \left(\frac{\|\Vec{\sigma}\|_1}{\varepsilon}\right)^\frac{\kappa}{\kappa-1}\right]\right). \notag 
\end{eqnarray}

\textbf{Case $\varepsilon <  \frac{8L_0}{L_1\sqrt{d}}$, arbitrary tuning:} We use parameters $T, \gamma_k = \frac{\gamma_0}{\sqrt{T}}, B_k = \max \{1, B_0T\}$ to get:
$$\frac1T \sum\limits_{k=1}^{T} \|\nabla f (x^k)\|_1 \leq \frac{4\Delta}{\sqrt{T}\gamma_0} + 80 \frac{L_0d\gamma_0}{\sqrt{T}} \log(\nicefrac{1}{\delta})  + 8 \frac{\|\Vec{\sigma}\|_1}{B_0^{\frac{\kappa-1}{\kappa}} T^{\frac{\kappa-1}{\kappa}}} + 24\frac{d\|\nabla f (x^1)\|_1}{T}  \log(\nicefrac{1}{\delta}).$$
 Setting such $T$ that the first two terms become less than $\varepsilon$, we obtain the final complexity $N = T \cdot B_0T.$

\textbf{Case $\varepsilon < \frac{8L_0}{L_1\sqrt{d}}$, optimal tuning:} We set the same batchsize $8\frac{\|\Vec{\sigma}\|_1}{B^{\frac{\kappa-1}{\kappa}}} \leq \varepsilon/2 \Rightarrow B_k \equiv \max \left\{1,  \left(\frac{16\|\Vec{\sigma}\|_1}{\varepsilon}\right)^\frac{\kappa}{\kappa-1}\right\}$. The stepsize $\gamma$ is set to minimize the sum:
$$\min_\gamma \left[ \frac{4\Delta}{T\gamma} + 80 L_0 d\gamma \log(\nicefrac{1}{\delta}) \right] = 2\sqrt{\frac{320\Delta L_0 d \log(\nicefrac{1}{\delta})}{T}} ,$$
it means that the stepsize $\gamma = \sqrt{\frac{4\Delta}{80 TL_0\log(\nicefrac{1}{\delta})d}}$. The number of iterations $T$ is chosen to satisfy $$2\sqrt{\frac{320\Delta L_0  \log(\nicefrac{1}{\delta})d}{T}} \leq \frac{\varepsilon}{2} \Rightarrow T = O \left(\frac{\Delta L_0  \log(\nicefrac{1}{\delta})d}{\varepsilon^2}\right).$$
We only need to check whether condition $\gamma \leq \frac{1}{48L_1d\log\frac1\delta \sqrt{d}}$ holds:
\begin{eqnarray}
    \gamma &=& \sqrt{\frac{4\Delta}{80 TL_0\log(\nicefrac{1}{\delta})d}} = \sqrt{\frac{4\Delta}{ T} \frac{1}{80L_0\log(\nicefrac{1}{\delta})d}} \notag \\
    &\leq& \frac{\varepsilon}{4} \frac{1}{80L_0\log(\nicefrac{1}{\delta})d} \leq \frac{8L_0}{4L_1 \sqrt{d}} \frac{1}{80L_0\log(\nicefrac{1}{\delta})d} \notag \\
    &\leq& \frac{1}{48L_1d\log\frac1\delta \sqrt{d}}. \notag
\end{eqnarray}
Hence, we have the following bound for sample complexity
\begin{eqnarray}
\varepsilon &<& \frac{8L_0}{L_1\sqrt{d}} \quad \Rightarrow \quad N = O\left(\frac{\Delta  L_0\log(\nicefrac{1}{\delta} ) d }{\varepsilon^2}\left[1 +  \left(\frac{\|\Vec{\sigma}\|_1}{\varepsilon}\right)^\frac{\kappa}{\kappa-1}\right]\right). 
\end{eqnarray}
\end{proof}

\begin{theorem}[\textbf{HP complexity for \algname{minibatch-SignSGD}, infinite horizon}] \label{thm: minibatch signsgd inifinite}
Consider lower-bounded $(L_0,L_1)$-smooth function $f$ (As. \ref{as: bounded}, \ref{as: smooth}) and HT gradient estimates (As. \ref{as: pBCM}). Then Alg. \ref{alg:minibatch-signSGD} requires the sample complexity $N$  to achieve $\min \limits_{k \in \overline{1,T}} \|\nabla f(x^k)\|_1 \leq \varepsilon$ with probability at least $1-\delta$ for:

\textbf{Arbitrary tuning:}  Until plateau $\gamma_k = \gamma_0 \leq \frac{1}{48L^\delta_1d^\frac32}, B_k = B_0k^2$, after $\gamma_k = \frac{\gamma_0}{\sqrt{k}}, B_k = B_0k$:
\begin{eqnarray}
    \varepsilon \geq \frac{8L_0}{L_1\sqrt{d}} &\Rightarrow& N = \tilde{O}\left( B_0\left(\frac{\Delta}{\gamma_0\varepsilon} 
 \right)^3 + \frac{1}{B_0^2}\left(\frac{\|\Vec{\sigma}\|_1\ln\frac1\varepsilon}{\varepsilon}\right)^\frac{3\kappa}{2(\kappa - 1)}\right), \notag \\
    \varepsilon \ll \frac{8L_0}{L_1\sqrt{d}} &\Rightarrow& N =  \tilde{O}\left(\frac{B_0(L_0^\delta \gamma_0 d + \Delta/\gamma_0)^4}{\varepsilon^4}  +  \frac{1}{B_0}\left(\frac{\|\Vec{\sigma}\|_1}{\varepsilon}\right)^\frac{2\kappa}{\kappa - 1} \right), \notag
\end{eqnarray} 
\textbf{Optimal tuning:} First $\frac{64\Delta L_1^\delta L_1 d^2}{L_0}$ steps,  $\gamma_k = \frac{1}{48L_1^\delta d^\frac32}, B_k = \left(16  k \right)^\frac{\kappa}{\kappa - 1}$, after $\gamma_k = \sqrt{\frac{\Delta}{20 d L_0^\delta k}}, B_k = \left(16  k \right)^\frac{\kappa}{2(\kappa - 1)}:$
\begin{eqnarray}
    \varepsilon \geq \frac{8L_0}{L_1\sqrt{d}} &\Rightarrow& N  = \tilde{O}\left(\left(\frac{\Delta L_1^\delta d^\frac32 + \|\Vec{\sigma}\|_1}{\varepsilon}\right)^\frac{2\kappa -1}{\kappa - 1}\right), \notag \\
\varepsilon \ll \frac{8L_0}{L_1\sqrt{d}} &\Rightarrow& N  = \tilde{O}\left(\left(\frac{\Delta L_0^\delta d + \|\Vec{\sigma}\|^2_1}{\varepsilon^2}\right)^\frac{3\kappa -2}{2(\kappa - 1)}\right). \notag
\end{eqnarray}
where $\Delta = f(x^1) - f^*, L_0^\delta = L_0 \log(\nicefrac{1}{\delta}), L_1^\delta = L_1 \log(\nicefrac{1}{\delta}).$
\end{theorem}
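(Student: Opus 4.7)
The plan is to apply the master convergence Lemma \ref{lem: signsgd T update} with the specified non-uniform schedules and convert its weighted-sum guarantee into a minimum-gradient-norm bound via $\min_{k \in \overline{1,T}} \|\nabla f(x^k)\|_1 \cdot \sum_{k=1}^T \gamma_k \leq \sum_{k=1}^T \gamma_k \|\nabla f(x^k)\|_1$. For each schedule I would compute in closed form the four quantities appearing in \eqref{eq: signsgd convergence lemma}: $\sum_k \gamma_k$, $\sum_k \gamma_k^2$, $C_T = \max_k \gamma_k\sum_{\tau<k}\gamma_\tau$, and the noise sum $\sum_k \gamma_k \|\Vec{\sigma}_k\|_1$, using the Batching Lemma \ref{lem: batching p} which gives $\|\Vec{\sigma}_k\|_1 \leq 2\|\Vec{\sigma}\|_1 / B_k^{(\kappa-1)/\kappa}$.

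For the arbitrary tuning in the accelerated regime $\varepsilon \geq 8L_0/(L_1\sqrt d)$, only phase~1 is active: $\gamma_k=\gamma_0$ and $B_k=B_0 k^2$. Then $\sum\gamma_k = T\gamma_0$, $C_T = T\gamma_0^2$, and the noise sum reduces (up to $\log$ factors) to $\gamma_0 \|\Vec{\sigma}\|_1 T^{(2-\kappa)/\kappa}/B_0^{(\kappa-1)/\kappa}$, using the standard tail estimate $\sum_{k=1}^T k^{-2(\kappa-1)/\kappa} = \tilde{O}(T^{(2-\kappa)/\kappa})$. Dividing by $\sum_k \gamma_k$ and requiring each summand to be at most $\varepsilon$ yields $T \gtrsim \Delta/(\gamma_0 \varepsilon)$ and $T \gtrsim (\|\Vec{\sigma}\|_1 \ln(1/\varepsilon)/\varepsilon)^{\kappa/(2(\kappa-1))}/\sqrt{B_0}$; the total sample budget $N=\sum_k B_k \asymp B_0 T^3/3$ then produces the two summands of the claimed bound. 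For the plateau regime $\varepsilon \ll 8L_0/(L_1\sqrt d)$, the same analysis is run in phase~2 with $\gamma_k = \gamma_0/\sqrt k$ and $B_k = B_0 k$: now $\sum_k \gamma_k = \Theta(\gamma_0\sqrt T)$, $\sum_k \gamma_k^2 = \Theta(\gamma_0^2 \log T)$, $C_T = \Theta(\gamma_0^2)$, and the noise sum scales like $\tilde O(\gamma_0 \|\Vec{\sigma}\|_1 T^{1/2+(2-\kappa)/(2\kappa)}/B_0^{(\kappa-1)/\kappa})$. Balancing against $\varepsilon$ and summing $N = \sum_k B_0 k \asymp B_0 T^2/2$ gives the stated $\tilde O(\varepsilon^{-4})$ rate.

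For the optimal tuning I would plug the specified phase-1 schedule $\gamma_k \equiv 1/(48 L_1^\delta d^{3/2})$, $B_k = (16k)^{\kappa/(\kappa-1)}$ into the same weighted-sum bound, transition at the plateau time $T_1 = \Theta(\Delta L_1^\delta L_1 d^2/L_0)$ to the phase-2 schedule $\gamma_k = \sqrt{\Delta/(20 d L_0^\delta k)}$, $B_k = (16k)^{\kappa/(2(\kappa-1))}$, and verify that every term in Lemma \ref{lem: signsgd T update} is of the same order at iteration $k$. Summing $N = \sum_k B_k$ and solving for the smallest $T$ that drives the resulting weighted average below $\varepsilon$ then collapses to the announced $\tilde O(((\Delta L_1^\delta d^{3/2}+\|\Vec\sigma\|_1)/\varepsilon)^{(2\kappa-1)/(\kappa-1)})$ and $\tilde O(((\Delta L_0^\delta d+\|\Vec\sigma\|_1^2)/\varepsilon^2)^{(3\kappa-2)/(2(\kappa-1))})$ complexities.

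The main obstacle is evaluating the partial sum $\sum_{k=1}^T k^{-s}$ uniformly for $s = 2(\kappa-1)/\kappa \in (0,1]$, since a logarithmic factor emerges exactly at $\kappa=2$ and must be threaded through both bounds; and stitching the two phases so that the state reached at the phase change acts as an inert initial value for phase~2 rather than inflating the cumulative bound. One must also check that the hypotheses of Lemma \ref{lem: signsgd T update} remain valid --- the non-increasing requirement is preserved across the transition and the bound $\gamma_k \leq 1/(48 L_1^\delta d^{3/2})$ holds throughout, trivially for phase~2 since $\gamma_k = \gamma_0/\sqrt k$ is strictly smaller than the phase-1 stepsize.
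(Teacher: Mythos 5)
Your proposal follows essentially the same route as the paper's proof: bound $\min_k\|\nabla f(x^k)\|_1$ by the $\gamma_k$-weighted average from Lemma \ref{lem: signsgd T update}, evaluate $\sum_k\gamma_k$, $\sum_k\gamma_k^2$, $C_T$ and the batched noise sum in closed form for each schedule via Lemma \ref{lem: batching p} and the tail estimate for $\sum_k k^{-s}$, then choose $T$ to drive each term below $\varepsilon$ and report $N=\sum_k B_k$. The only discrepancy is cosmetic: your (correct) threshold $T\gtrsim B_0^{-1/2}(\|\Vec\sigma\|_1/\varepsilon)^{\kappa/(2(\kappa-1))}$ in the first arbitrary-tuning case yields a $B_0^{-1/2}$ rather than the stated $B_0^{-2}$ prefactor on the noise summand of $N$, a difference only in the power of the free tuning constant $B_0$ that does not affect the $\varepsilon$-dependence.
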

\begin{proof}
First, we derive upper bound for new $\min$ metric with non-constant  parameters:
    \begin{eqnarray}
    \min \limits_{k \in \overline{1,T}} \|\nabla f(x^k)\|_1 &\leq& \frac{\sum\limits_{k=1}^T \gamma_k\|\nabla f (x^k)\|_1}{\sum\limits_{k=1}^T \gamma_k} =  \frac{\Delta}{\sum\limits_{k=1}^T \gamma_k} + L_0d\frac{\sum\limits_{k=1}^T\gamma_k^2}{\sum\limits_{k=1}^T \gamma_k} + \frac{2\sum\limits_{k=1}^T\gamma_k \|\Vec{\sigma}\|_1/B_k^\frac{\kappa - 1}{\kappa} } {\sum\limits_{k=1}^T \gamma_k}\notag \\
       &+& 6d(\gamma^{max} \|\nabla f (x^1)\|_1  + 2C_TL_0) \frac{\log(\nicefrac{1}{\delta})}{\sum\limits_{k=1}^T \gamma_k}. \notag
\end{eqnarray}
\textbf{Case $\varepsilon \geq \frac{8L_0}{L_1 \sqrt{d}}$, optimal tuning.} If we consider only first $T \leq \frac{64\Delta L_1^\delta L_1 d^2}{L_0}$ steps with constant stepsizes $\gamma_k = \frac{1}{48L_1^\delta d^\frac32}$ and increasing batchsizes $B_k = \left(16  k \right)^\frac{\kappa}{\kappa - 1}$, we get
\begin{eqnarray}
    \sum\limits_{k=1}^T \gamma_k &=& \frac{T}{48L_1^\delta d^\frac32}, \sum\limits_{k=1}^T \gamma^2_k = \frac{T}{(48L_1^\delta d^\frac32)^2}, \gamma^{max} = \frac{1}{48L_1^\delta d^\frac32}, C_T = \frac{T}{(48L_1^\delta d^\frac32)^2},\notag \\
    \sum\limits_{k=1}^T \frac{1}{B_k^\frac{\kappa - 1}{\kappa}} &=&  \sum\limits_{k=1}^T \frac{1}{16k } \leq  \frac{\ln T}{16}, \notag \\
    \min \limits_{k \in \overline{1,T}} \|\nabla f(x^k)\|_1 &\leq&  \frac{48\Delta L_1^\delta d^\frac32}{T} + \frac{24 L_0^\delta d}{48L_1^\delta d^\frac32} + \frac{2 \|\Vec{\sigma}\|_1} {T}  \frac{\ln T}{16} \leq \varepsilon. \notag 
\end{eqnarray}
The term $\frac{24 L_0^\delta d}{48L_1^\delta d^\frac32} \leq \frac{\varepsilon}{16}$ is bounded by condition, and the number of iterations $T = \tilde{O}\left(\frac{(\Delta  L_1^\delta d^\frac32 + \|\Vec{\sigma}\|_1)}{\varepsilon}\right)$ is enough to bound the other terms. The total sample complexity is 
\begin{eqnarray}
    \sum\limits_{k=1}^T B_k = \sum\limits_{k=1}^T \left(16  k \right)^\frac{\kappa}{\kappa - 1} \leq (16T)^\frac{2\kappa -1}{\kappa - 1} = \tilde{O}\left(\left(\frac{(\Delta  L_1^\delta d^\frac32 + \|\Vec{\sigma}\|_1)}{\varepsilon}\right)^\frac{2\kappa -1}{\kappa - 1}\right).
\end{eqnarray}

\textbf{Case $\varepsilon \ll \frac{8L_0}{L_1 \sqrt{d}}$, optimal tuning.} In this case, the first $\frac{64L_1 L_1^\delta d^2}{L_0}$ steps can be neglected, and we use decreasing stepsizes $\gamma_k = \sqrt{\frac{\Delta}{20 d L_0^\delta k}}$ and increasing batchsizes $B_k = \left(16  k \right)^\frac{\kappa}{2(\kappa - 1)}$ to get
\begin{eqnarray}
    \sum\limits_{k=1}^T \gamma_k &=& 2\sqrt{\frac{\Delta T}{20L_0^\delta d}}, \sum\limits_{k=1}^T \gamma^2_k = \frac{\Delta \ln T}{20L_0^\delta d}, \gamma^{max} = \frac{1}{48L_1^\delta d^\frac32}, C_T = \frac{\Delta}{20 L_0^\delta d},\notag \\
    \sum\limits_{k=1}^T \frac{\gamma_k}{B_k^\frac{\kappa - 1}{\kappa}} &=&  \sqrt{\frac{\Delta}{20 L_0^\delta d}} \sum\limits_{k=1}^T \frac{1}{\sqrt{k}}\frac{1}{4\sqrt{k} } \leq  \sqrt{\frac{\Delta}{256 L_0^\delta d}} \ln T, \notag \\
    \min \limits_{k \in \overline{1,T}} \|\nabla f(x^k)\|_1 &\leq&  \sqrt{\frac{80L_0^\delta d}{T}} + \sqrt{\frac{\Delta L_0^\delta d}{20  T}} \ln T + 2 \frac{\|\Vec{\sigma}\|_1 \ln T}{\sqrt{T}} + \sqrt{\frac{\Delta L_0^\delta d}{20  T}} \leq \varepsilon. \notag 
\end{eqnarray}
Hence, the number of iterations $T = \tilde{O}\left(\frac{(\Delta L_0^\delta d + \|\Vec{\sigma}\|^2_1)}{\varepsilon^2}\right)$ is enough to bound the sum. The total sample complexity is 
\begin{eqnarray}
    \sum\limits_{k=1}^T B_k = \sum\limits_{k=1}^T \left(16  k \right)^\frac{\kappa}{2(\kappa - 1)} \leq (16T)^\frac{3\kappa -2}{2(\kappa - 1)} = \tilde{O}\left(\left(\frac{(\Delta L_0^\delta d + \|\Vec{\sigma}\|^2_1)}{\varepsilon^2}\right)^\frac{3\kappa -2}{2(\kappa - 1)}\right).
\end{eqnarray}

\textbf{Case $\varepsilon \geq \frac{8L_0}{L_1 \sqrt{d}}$, arbitrary tuning.} If we consider only first $T$ steps until plateau $\frac{8L_0}{L_1 \sqrt{d}}$, we use constant stepsizes $\gamma_k = \gamma_0 \leq  \frac{1}{48 \delta_1 d^\frac32 }$ and increasing batchsizes $B_k =B_0k^2$ to get
\begin{eqnarray}
    \sum\limits_{k=1}^T \gamma_k &=& T\gamma_0, \sum\limits_{k=1}^T \gamma^2_k = T\gamma_0^2, \gamma^{max} = \gamma_0, C_T = T\gamma_0,\notag \\
    \sum\limits_{k=1}^T \frac{1}{B_k^\frac{\kappa - 1}{\kappa}} &=&  \sum\limits_{k=1}^T \frac{1}{(\sqrt{B_0}k)^\frac{2(\kappa - 1)}{\kappa} } \leq  \frac{T^\frac{2 -\kappa}{\kappa}\ln T}{B_0^\frac{\kappa - 1}{\kappa}}, \notag \\
    \min \limits_{k \in \overline{1,T}} \|\nabla f(x^k)\|_1 &\leq&  \frac{\Delta }{\gamma_0 T} + \frac{24 L_0^\delta d}{\gamma_0} + \frac{2 \|\Vec{\sigma}\|_1} {(T\sqrt{B_0})^\frac{2(\kappa - 1)}{\kappa}} \ln T \leq \varepsilon. \notag 
\end{eqnarray}
The term $\frac{24 L_0^\delta d}{\gamma_0} \leq \frac{\varepsilon}{16}$ is bounded by condition, and the number of iterations $T = \tilde{O}\left( \left(\frac{\Delta }{\gamma_0\varepsilon} 
 \right) + \frac{1}{B_0}\left(\frac{\|\Vec{\sigma}\|_1}{\varepsilon}\right)^\frac{\kappa}{2(\kappa - 1)}\right)$ is enough to bound the other terms. The total sample complexity is 
\begin{eqnarray}
    \sum\limits_{k=1}^T B_k = \sum\limits_{k=1}^T B_0 k^2 \leq B_0T^3 = \tilde{O}\left( B_0\left(\frac{\Delta}{\gamma_0\varepsilon} 
 \right)^3 + \frac{1}{B_0^2}\left(\frac{\|\Vec{\sigma}\|_1}{\varepsilon}\right)^\frac{3\kappa}{2(\kappa - 1)}\right). \notag
\end{eqnarray}

\textbf{Case $\varepsilon \ll \frac{8L_0}{L_1 \sqrt{d}}$, arbitrary tuning.} In this case, the first $\frac{64\Delta L_1 L_1^\delta d^2}{L_0}$ steps can be neglected, and we use decreasing stepsizes $\gamma_k = \frac{\gamma_0}{\sqrt{k}}$ and increasing batchsizes $B_k = B_0k$ to get
\begin{eqnarray}
    \sum\limits_{k=1}^T \gamma_k &=& \gamma_0\sqrt{T}, \sum\limits_{k=1}^T \gamma^2_k = \gamma_0^2\ln T, \gamma^{max} = \gamma_0, C_T = \gamma_0^2,\notag \\
    \sum\limits_{k=1}^T \frac{\gamma_k}{B_k^\frac{\kappa - 1}{\kappa}} &=&  \frac{\gamma_0}{B_0^\frac{\kappa - 1}{\kappa}} \sum\limits_{k=1}^T \frac{1}{k^\frac{3\kappa  -2}{2\kappa}} \leq  \frac{\gamma_0}{B_0^\frac{\kappa - 1}{\kappa}} T^\frac{2 - \kappa}{2\kappa} \ln T, \notag \\
    \min \limits_{k \in \overline{1,T}} \|\nabla f(x^k)\|_1 &\leq&  \frac{\Delta}{\gamma_0 \sqrt{T}} + L^\delta_0 d \gamma_0 \frac{\ln T}{\sqrt{T}} + \frac{\|\Vec{\sigma}\|_1 \ln T}{B_0^\frac{\kappa - 1}{\kappa} T^\frac{\kappa - 1}{\kappa} } \leq \varepsilon. \notag 
\end{eqnarray}
Hence, the number of iterations $T = \tilde{O}\left(\frac{(L_0^\delta \gamma_0 d + \Delta/\gamma_0)^2}{\varepsilon^2}  +  \frac{1}{B_0}\left(\frac{\|\Vec{\sigma}\|_1}{\varepsilon}\right)^\frac{\kappa}{\kappa - 1} \right)$ is enough to bound the sum. The total sample complexity is 
\begin{eqnarray}
    \sum\limits_{k=1}^T B_k = \sum\limits_{k=1}^T B_0k \leq B_0T^2 = \tilde{O}\left(\frac{B_0(L_0^\delta \gamma_0 d + \Delta/\gamma_0)^4}{\varepsilon^4}  +  \frac{1}{B_0}\left(\frac{\|\Vec{\sigma}\|_1}{\varepsilon}\right)^\frac{2\kappa}{\kappa - 1} \right).
\end{eqnarray}

\end{proof}

\subsection{Proof of \algname{MajorityVote-SignSGD} Complexity Theorem \ref{thm:com-sign conv}}\label{subsec: majority vote sign proofs}
We start this section with a general lemma on convergence of \algname{MajorityVote-SignSGD}. The proof of Theorem \ref{thm:com-sign conv} is located after the lemma and divided into two parts: for finite horizon with optimal tuning (Theorem \ref{thm: signsgd majority finite}) and for infinite horizon with arbitrary tuning (Theorem \ref{thm: majority signsgd inifinite}).
\begin{lemma}[\textbf{\algname{MajorityVote-SignSGD} Convergence Lemma}] \label{lem: majority signsgd T update}
Consider lower-bounded $(L_0, L_1)$-smooth function $f$ (As. \ref{as: bounded}, \ref{as: smooth}) and HT \textbf{unimodal and symmetric} gradient estimates $\kappa > 0$ (As. \ref{as: pBCM}). Then Alg. \ref{alg:majorityvotesignSGDsingle} after $T$ iterations with non-increasing stepsizes $\gamma_k \leq 1/ (48L_1d^\frac32\log\frac1\delta)$ and batchsizes $M_k \geq 160/\kappa^2$ achieves with probability at least $1 - \delta$:
\begin{equation}
 \sum\limits_{k=1}^T \frac{\gamma_k}{16}\|\nabla f (x^k)\|_1 \leq \Delta + L_0d\sum_{k=1}^T\gamma_k^2 + 2\sum_{k=1}^T\gamma_k \frac{\|\Vec{\sigma}\|_1}{\sqrt{M_k}} 
       + 6d(\gamma_1 \|\nabla f (x^1)\|_1  + 2C_TL_0) \log\frac{1}{\delta}, \label{eq: majority signsgd convergence lemma}
\end{equation}
where $C_T := \max\limits_{k \in \overline{1,T}} \gamma_k \cdot  \sum\limits_{\tau=1}^{k-1}\gamma_\tau$ and $\Delta = f(x^1) - f^*$.
\end{lemma}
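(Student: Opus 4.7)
\textbf{Proof sketch for Lemma \ref{lem: majority signsgd T update}.} The plan is to reuse essentially the entire argument of Lemma \ref{lem: signsgd T update}, changing only the single place where the properties of the noise enter: the coordinate-wise sign mismatch probability. Concretely, the descent step based on $(L_0,L_1)$-smoothness (Lemma \ref{lem: L_0,L_1 smoothness}), the choice of stepsizes $\gamma_k \le 1/(48L_1 d^{3/2}\log\tfrac1\delta)$ that absorbs the $L_1$-exponential factor, the introduction of the martingale differences $D_k = -\gamma_k(\phi_k-\psi_k)\|\nabla f(x^k)\|_1$, and the application of the Measure Concentration Lemma \ref{lem: bernstein ineq} with $\lambda := 1/(6d(\gamma_1\|\nabla f(x^1)\|_1 + 2 C_T L_0))$ go through verbatim. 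All that changes is the estimate of $\psi_k = \mathbb{E}[\langle \nabla f(x^k),\sign(g^k)\rangle\mid x^k]/\|\nabla f(x^k)\|_1$ with the new $g^k = \sign\bigl(\sum_{j=1}^{M_k}\sign(\nabla f(x^k,\xi_j^k))\bigr)$.

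At the decomposition
\[
\psi_k\|\nabla f(x^k)\|_1 \;=\; \|\nabla f(x^k)\|_1 - 2\sum_{i=1}^d |\nabla f(x^k)_i|\cdot \mathbb{P}\!\left(\sign(g^k)_i \neq \sign(\nabla f(x^k)_i)\,\big|\,x^k\right),
\]
the proof of Lemma \ref{lem: signsgd T update} replaced the per-coordinate failure probability by $\sigma_{k,i}/|\nabla f(x^k)_i|$ via Markov plus Jensen. Here I would instead proceed as follows. First, for each fixed coordinate $i$ and each worker $j$, let $q_i := \mathbb{P}(\sign(\nabla f(x^k,\xi_j^k))_i \neq \sign(\nabla f(x^k)_i)\mid x^k)$. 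Since the coordinate noise is unimodal and symmetric about $0$, a classical argument yields $q_i \le \tfrac12$, and combining this with the bounded $\kappa$-th moment (As. \ref{as: pBCM}) gives the quantitative gap
\[
\tfrac12 - q_i \;\ge\; c\cdot\min\!\left\{\kappa,\;\frac{|\nabla f(x^k)_i|^\kappa}{\sigma_i^\kappa}\right\}
\]
for a universal constant $c>0$ (this is exactly the sort of ``symmetric noise gives balanced Bernoulli'' bound used in \cite{bernstein2018majorityvote,safaryan2021stochastic}). Since $g^k_i$ is the sign of the sum of $M_k$ i.i.d.\ Bernoulli trials with bias $\tfrac12 - q_i$, Hoeffding's inequality applied to the majority count gives
\[
\mathbb{P}(\sign(g^k)_i \neq \sign(\nabla f(x^k)_i)\mid x^k) \;\le\; \exp\!\bigl(-2 M_k (\tfrac12-q_i)^2\bigr).
\]

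Next I would turn this into the $\|\Vec{\sigma}\|_1/\sqrt{M_k}$ term appearing in \eqref{eq: majority signsgd convergence lemma}. Split cases by whether $|\nabla f(x^k)_i|^\kappa \ge \sigma_i^\kappa$ or not. In the former case the gap is $\Omega(1)$, so $M_k \ge 160/\kappa^2 \gtrsim 1$ already makes the probability exponentially small and the contribution $|\nabla f(x^k)_i|\cdot\mathbb{P}(\cdots)$ is dominated by any polynomial $\sigma_i/\sqrt{M_k}$ term. In the latter ``noise-dominated'' case the gap is $c\kappa \cdot (|\nabla f(x^k)_i|/\sigma_i)^\kappa$; plugging this into Hoeffding and optimizing over the two regimes (or just bounding $|\nabla f(x^k)_i|\cdot e^{-C M_k \kappa^2 (|\nabla f(x^k)_i|/\sigma_i)^{2\kappa}}$ by its maximum over $|\nabla f(x^k)_i|$) gives the desired
\[
|\nabla f(x^k)_i|\cdot \mathbb{P}(\sign(g^k)_i \neq \sign(\nabla f(x^k)_i)\mid x^k) \;\le\; \frac{\sigma_i}{\sqrt{M_k}}.
\]
The lower bound $M_k\ge 160/\kappa^2$ is precisely what is needed so that the exponent $-2M_k(c\kappa)^2$ is a useful constant even in the worst noise regime.

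Summing over $i$ yields $\sum_i 2|\nabla f(x^k)_i|\mathbb{P}(\text{flip}) \le 2\|\Vec{\sigma}\|_1/\sqrt{M_k}$, which replaces $2\|\Vec{\sigma}_k\|_1$ from the SignSGD proof. Inserting this into \eqref{eq: sign sgd before prob} (which was derived independently of the noise model) gives exactly \eqref{eq: majority signsgd convergence lemma}. The main technical obstacle is the two-case probability bound linking the symmetric-noise $\tfrac12$-bound with the $\kappa$-th moment gap and then propagating it through Hoeffding to get the clean $\sigma_i/\sqrt{M_k}$ form uniformly in $|\nabla f(x^k)_i|$; everything else is a straight transcription of the SignSGD proof.
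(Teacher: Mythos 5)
Your overall architecture is exactly the paper's: rerun the \algname{SignSGD} Convergence Lemma verbatim and change only the per-coordinate sign-flip probability, which the paper does via the generalized Gauss inequality for unimodal symmetric variables followed by a Chebyshev-type bound $\mathbb{P}[\text{majority fails}] \le \bigl(1 + M_k/(\tfrac{1}{4\varepsilon_j^2}-1)\bigr)^{-1}$ on the Bernoulli majority (your Hoeffding substitute is a legitimate alternative there). However, two quantitative steps in your sketch do not go through as written. First, your claimed gap $\tfrac12 - q_i \ge c\min\{\kappa,(|\nabla f(x^k)_i|/\sigma_i)^\kappa\}$ has the wrong exponent in the noise-dominated regime: Gauss's inequality gives $\tfrac12 - q_i \ge \tfrac12 S_i/(\kappa+1)^{1/\kappa}$ with $S_i = |\nabla f(x^k)_i|/\sigma_i$, i.e.\ \emph{linear} in $S_i$ for every $\kappa>0$ (and for $\kappa<1$ your $S_i^\kappa$ bound is actually stronger than the sharp one and false for small $S_i$). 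This linearity is not cosmetic: maximizing $u\exp(-aM_k u^2/\sigma_i^2)$ over $u$ gives $O(\sigma_i/\sqrt{M_k})$, whereas maximizing $u\exp(-aM_k(u/\sigma_i)^{2\kappa})$ gives $O(\sigma_i M_k^{-1/(2\kappa)})$, which for $\kappa>1$ (e.g.\ $\sigma_i/M_k^{1/4}$ at $\kappa=2$) is too weak to produce the $\|\Vec{\sigma}\|_1/\sqrt{M_k}$ term in \eqref{eq: majority signsgd convergence lemma}.

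Second, in the signal-dominated regime the contribution $|\nabla f(x^k)_i|\cdot\mathbb{P}(\text{flip})$ is \emph{not} dominated by $\sigma_i/\sqrt{M_k}$. There the gap is only $\Omega(\kappa)$, so with $M_k = \Theta(160/\kappa^2)$ Hoeffding yields a small but fixed constant probability $e^{-\Theta(1)}$, and $|\nabla f(x^k)_i|$ can be arbitrarily large relative to $\sigma_i$; no bound of the form $\sigma_i/\sqrt{M_k}$ holds uniformly. The paper handles this by bounding the signal-dominated failure probability by $\tfrac{1}{32}$, so that its total contribution is $\tfrac{1}{16}\|\nabla f(x^k)\|_1$, which is then absorbed into the left-hand side (this is why the lemma's constant is $\tfrac{1}{16}$ rather than the $\tfrac18$ of Lemma~\ref{lem: signsgd T update}). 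Your sketch needs the same absorption step; with that fix and the corrected linear Gauss gap, the rest of your argument matches the paper's proof.
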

\begin{proof}
The beginning of this proof exactly copies the proof of \algname{SignSGD} Convergence Lemma (Appendix \ref{subsec: signsgd conv}) until equality \eqref{eq: line with prob sign}. We have to estimate the probability of failure of majority voting for each coordinate $j$ conditioned on $x^k$, namely,
   \begin{eqnarray}
     \mathbb{P}\left(\sign(\nabla f (x^k))_j\neq \sign\left[\sum_{i=1}^{M_k}\sign(g^k_i) \right]_j\right), \quad g^k_i = \nabla f(x^k, \xi^k_i). \notag 
\end{eqnarray}
%%For the sake of convenience, we omit coordinate index $j$. 
We use the generalized Gauss's Inequality about distribution of unimodal symmetric random variables \citep[Theorem 1]{dharmadhikari1986gauss}. 
\begin{lemma}[Gauss's Inequality]\label{lem: gauss ineq}
Let a random variable $\xi$ be unimodal symmetric with mode $\nu$ and bounded $\kappa$-th moment, $\kappa > 0$. Then the following bounds hold:
$$\mathbb{P}\left[|\xi - \nu| \geq \tau\right] \leq \begin{cases} \left(\frac{\kappa}{\kappa+1}\right)^\kappa \frac{\EE[|\xi - \nu|]^\kappa}{\tau^\kappa}, & \quad \tau^\kappa \geq \frac{\kappa^\kappa}{(\kappa+1)^{\kappa-1}} \cdot \EE[|\xi - \nu|^\kappa], \\
1 - \left[\frac{\tau^\kappa}{(\kappa+1) \EE[|\xi - \nu|]^\kappa}\right]^\frac1\kappa, &\quad \tau^\kappa \leq \frac{\kappa^\kappa}{(\kappa+1)^{\kappa-1}} \cdot \EE[|\xi - \nu|^\kappa].
\end{cases}$$    
\end{lemma}
We use Gauss's Inequality for each variable $g^k_{i,j} = \nabla f(x^k, \xi^k_i)_j$ satisfying the symmetry requirement from the theorem's statement. We denote  $S_j := \frac{|\nabla f (x^k)_j|}{\sigma_j}$ and bound
\begin{eqnarray}
    \mathbb{P}\left[\sign(\nabla f (x^k)_j) \neq \sign(g^k_{i,j})\right] &=& \mathbb{P}\left[g^k_{i,j} - \nabla f (x^k)_j \geq |\nabla f (x^k)_j|\right] \notag \\
    &=& \frac12 \mathbb{P}\left[|g^k_{i,j} - \nabla f (x^k)_j| \geq |\nabla f (x^k)_j|\right] \notag \\
    &\leq&  \begin{cases} \frac12\left(\frac{\kappa}{\kappa+1}\right)^\kappa \frac{\sigma_j^\kappa}{|\nabla f (x^k)_j|^\kappa}, & \quad |\nabla f (x^k)_j|^\kappa \geq \frac{\kappa^\kappa}{(\kappa+1)^{\kappa-1}} \cdot \sigma_j^\kappa, \\
\frac12 - \frac12\left[\frac{|\nabla f (x^k)_j|^\kappa}{(\kappa+1) \sigma_j^\kappa}\right]^\frac1\kappa, &\quad |\nabla f (x^k)_j|^\kappa \leq \frac{\kappa^\kappa}{(\kappa+1)^{\kappa-1}} \cdot \sigma_j^\kappa,
\end{cases} \notag \\
    &\leq&  \begin{cases} \frac12\left(\frac{\kappa}{\kappa+1}\right)^\kappa \frac{1}{S_j^\kappa}, & \quad S_j^\kappa \geq \frac{\kappa^\kappa}{(\kappa+1)^{\kappa-1}} , \\
\frac12 - \frac12\frac{S_j}{(\kappa+1)^\frac{1}{\kappa} }, &\quad S_j^\kappa \leq \frac{\kappa^\kappa}{(\kappa+1)^{\kappa-1}},
\end{cases}\notag
\end{eqnarray}

We denote probability of failure of a single estimate by 
\begin{eqnarray}
q_j &:=& \mathbb{P}\left[\sign(\nabla f (x^k)_j) \neq \sign(g^k_{i,j})\right] \notag \\
&\leq&  \begin{cases} \frac12\left(\frac{\kappa}{\kappa+1}\right)^\kappa \frac{1}{S_j^\kappa}, & \quad S_j^\kappa \geq \frac{\kappa^\kappa}{(\kappa+1)^{\kappa-1}} , \\
\frac12 - \frac12\frac{S_j}{(\kappa+1)^\frac{1}{\kappa} }, &\quad S_j^\kappa \leq \frac{\kappa^\kappa}{(\kappa+1)^{\kappa-1}},\notag 
\end{cases} \\
&=:& \tilde{q}_j(S_j). \label{eq: gauss inequality}
\end{eqnarray}
Moreover, this probability  $q_j \leq \tilde{q}_j(S_j) < \frac12$, and the deviation of $q_j$ from $\frac12$ can be bounded by 
$$\varepsilon_j := \frac{1}{2} - q_j \leq \frac{1}{2} - \tilde{q}_j(S_j) =: \tilde{\varepsilon}_j(S_j).$$
The probability of getting the wrong sign can be restated as the probability of failing half out of $M_k$  Bernoulli trials  with fail probability $q_j$:
\begin{eqnarray}
\mathbb{P}\left[\sign(\nabla f (x^k)_j) \neq \sign\left[\sum\limits_{i=1}^{M_k}\sign(g^k_{i,j})\right]\right]  \leq \frac{1}{1 + \frac{M_k}{\frac{1}{4\varepsilon_j^2} - 1}} < \frac{1}{1 + \frac{M_k}{\frac{1}{4\tilde{\varepsilon}_j^2(S_j)} - 1}}. \label{eq: prob not equal after bernoulli}
\end{eqnarray}
\begin{itemize}
    \item First, we consider the case $S_j \geq \frac{\kappa}{(\kappa+1)^{\frac{\kappa-1}{\kappa}}}$:
\begin{eqnarray}
\tilde{\varepsilon}_j^2(S_j) = \left(\frac12  - \frac12\left(\frac{\kappa}{\kappa+1}\right)^\kappa \frac{1}{S_j^\kappa} \right)^2 \geq \frac{1}{4} \frac{\kappa^2}{(\kappa + 1)^2} \notag,
\end{eqnarray}
\begin{eqnarray}
    \frac{1}{4\tilde{\varepsilon}_j^2(S_j)} - 1 &\leq& \frac{(\kappa + 1)^2}{\kappa^2} - 1 \leq \frac{5}{\kappa^2}. \notag 
\end{eqnarray}
 If we set $M_k \geq \frac{160}{\kappa^2}$, then the fail probability is upper bounded by
\begin{equation}
\mathbb{P}\left[\sign(\nabla f (x^k)_j) \neq \sign\left[\sum\limits_{i=1}^{M_k}\sign(g^k_{i,j})\right]\right]  < \frac{1}{1 + \frac{M_k}{\frac{1}{4\tilde{\varepsilon}_j^2(S_j)} - 1}} \leq \frac{1}{32}. \label{eq: S geq bound}
\end{equation}
\item For the case $S_j < \frac{\kappa}{(\kappa+1)^{\frac{\kappa-1}{\kappa}}}$, we derive the bound:
\begin{eqnarray}
    \frac{1}{4\tilde{\varepsilon}_j^2(S_j)} - 1 &=& \frac{(\kappa+1)^\frac2\kappa}{S_j^2} - 1 \leq \frac{4}{S_j^2}.
\end{eqnarray}
And we use the inequality $\frac{1}{1 + x^2} \leq \frac{1}{2x}, x > 0$ on \eqref{eq: prob not equal after bernoulli}:
\begin{eqnarray}
\eqref{eq: prob not equal after bernoulli} &\leq& \frac{\sqrt{\frac{1}{4\tilde{\varepsilon}_j^2(S_j)} - 1}}{2\sqrt{M_k}} \leq \frac{1}{\sqrt{M_k}} \cdot \frac{1}{S_j}.\label{eq: S leq bound}  
\end{eqnarray}
\end{itemize}
Combining \eqref{eq: S geq bound} and \eqref{eq: S leq bound} together, we obtain the bound for each coordinate:
 \begin{eqnarray}
 \mathbb{P}\left[\sign(\nabla f (x^k)_j) \neq \sign\left[\sum\limits_{i=1}^{M_k}\sign(g^k_{i,j})\right]\right]  \leq \frac{1}{32} + \frac{1}{\sqrt{M_k}} \cdot \frac{1}{S_j} = \frac{1}{32} +  \frac{1}{\sqrt{M_k}} \frac{\sigma_j}{|\nabla f (x^k)_j|}. \label{eq: majority proof P_M bound}
 \end{eqnarray}
The rest of this proof is copying the proof of \algname{SignSGD} Convergence Lemma (Appendix \ref{subsec: signsgd conv}) until the equality \eqref{eq: line with prob sign}. There we replace probability of single estimate with the majority voting and obtain:
$$ \sum_{j=1}^d  |\nabla f (x^k)|_j \cdot \mathbb{P}\left[\sign(\nabla f (x^k)_j) \neq \sign\left[\sum\limits_{i=1}^{M_k}\sign(g^k_{i,j})\right]\right]
   \leq  \frac{\|\nabla f(x^k)\|_1}{32} +  \frac{\|\Vec{\sigma}\|_1}{\sqrt{M_k}} $$ instead of
$$ \sum_{j=1}^d  |\nabla f (x^k)|_j \cdot \mathbb{P}(\sign([\nabla f (x^k))]_j \neq [\sign(g^k)]_j  )
   \leq  \frac{\|\Vec{\sigma}\|_1}{B_k^\frac{\kappa-1}{\kappa}}. $$
Hence, the final bound on the sum of $\ell_1$-norm of gradients with probability at least $1 - \delta$ is 
\begin{eqnarray}
    \frac{1}{16} \sum\limits_{k=1}^T \gamma_k\|\nabla f (x^k)\|_1 &\leq& \Delta + L_0d\sum_{k=1}^T\gamma_k^2 + 2\sum_{k=1}^T\gamma_k \frac{\|\Vec{\sigma}\|_1}{\sqrt{M_k}} \notag \\
       &+& 6d(\gamma^{max} \|\nabla f (x^1)\|_1  + 2C_TL_0) \log(\nicefrac{1}{\delta}), \quad M_k \geq \frac{160}{\kappa^2}. \notag
\end{eqnarray}
\end{proof}
\begin{theorem}[\textbf{HP complexity for \algname{MajorityVote-SignSGD}, finite horizon}] \label{thm: signsgd majority finite}
  Consider lower-bounded $(L_0, L_1)$-smooth function $f$ (As. \ref{as: bounded}, \ref{as: smooth}) and the gradient estimates corrupted by \textbf{unimodal and
symmetric HT noise with $\kappa > 0$} (As. \ref{as: pBCM}). Then Alg. \ref{alg:majorityvotesignSGDsingle} requires the sample complexity $N$ to achieve $\frac{1}{T} \sum_{k=1}^{T}  \|\nabla f(x^k)\|_1 \leq \varepsilon$ with probability at least $1-\delta$ for:

\textbf{Optimal tuning for $\varepsilon > \frac{8L_0}{L_1\sqrt{d}}$:}  $T = O\left(\frac{\Delta L^\delta_1 d^\frac{3}{2} }{\varepsilon}\right), \gamma_k \equiv \frac{1}{48 L_1^\delta d^\frac32} , M_k \equiv \max \left\{\frac{160}{\kappa^2}, \frac{2^{16}\|\Vec{\sigma}\|^2_1}{\varepsilon^2}\right\}:$
\begin{equation}
    N = O\left(\frac{\Delta L_1^\delta   d^\frac{3}{2}}{\varepsilon}\left[\frac{1}{\kappa^2} +  \left(\frac{\|\Vec{\sigma}\|_1}{\varepsilon}\right)^2\right]\right),   \notag 
\end{equation}
\textbf{Optimal tuning for $\varepsilon \leq \frac{8L_0}{L_1\sqrt{d}}$:} $T = O\left(\frac{\Delta L_0^\delta d }{\varepsilon^2}\right), \gamma_k \equiv \sqrt{\frac{\Delta}{ 80L_0^\delta dT}} , M_k \equiv \max \left\{\frac{160}{\kappa^2}, \frac{2^{16}\|\Vec{\sigma}\|^2_1}{\varepsilon^2}\right\}:$ 
\begin{equation}
   N = O\left(\frac{\Delta L_0^\delta d }{\varepsilon^2}\left[\frac{1}{\kappa^2} +  \left(\frac{\|\Vec{\sigma}\|_1}{\varepsilon}\right)^2\right]\right), \notag 
\end{equation}
\textbf{Arbitrary tuning for $\varepsilon \leq \frac{8L_0}{L_1\sqrt{d}}$:}  $T, \gamma_k \equiv \frac{\gamma_0}{\sqrt{T}}, M_k \equiv \max \{160/\kappa^2, M_0T\}$:
\begin{equation}
    N = O\left(\frac{M_0(\nicefrac{\Delta}{\gamma_0} + L_0^\delta d \gamma_0)^4} {\varepsilon^4} + \frac{1}{M_0}\left(\frac{\|\Vec{\sigma}\|_1}{\varepsilon}\right)^4\right),  \notag 
\end{equation} 
\textbf{Arbitrary tuning for $\varepsilon \geq \frac{8L_0}{L_1\sqrt{d}}$:}  $T, \gamma_k \equiv \gamma_0 \leq \frac{1}{48L^\delta_1 d^\frac32},  M_k \equiv \max \{160/\kappa^2, M_0T^2\}$:
\begin{equation}
    N = O\left(M_0\left(\frac{\Delta } {\varepsilon\gamma_0} \right)^3 + \frac{1}{M_0^2}\left(\frac{\|\Vec{\sigma}\|_1}{\varepsilon}\right)^3\right),  \notag  
\end{equation} 
where $\Delta = f(x^1) - f^*, L_0^\delta = L_0 \log(\nicefrac{1}{\delta}), L_1^\delta = L_1 \log(\nicefrac{1}{\delta}).$
\end{theorem}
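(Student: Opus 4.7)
The plan is to follow the same scheme used for \algname{minibatch-SignSGD} in Theorem~\ref{thm: minibatch signsgd finite}, replacing the HT batching error $\|\Vec{\sigma}\|_1/B_k^{(\kappa-1)/\kappa}$ by the majority-voting error $\|\Vec{\sigma}\|_1/\sqrt{M_k}$, which is exactly what the \algname{MajorityVote-SignSGD} Convergence Lemma~\ref{lem: majority signsgd T update} delivers under the extra constraint $M_k \geq 160/\kappa^2$. Plugging constant stepsizes $\gamma_k \equiv \gamma \leq 1/(48 L_1^\delta d^{3/2})$ and constant $M_k \equiv M$ into~\eqref{eq: majority signsgd convergence lemma}, computing $\gamma^{\max} = \gamma$ and $C_T \leq T\gamma^2$, and dividing by $T\gamma/16$ yields, with probability at least $1-\delta$,
\begin{equation*}
\frac{1}{T}\sum_{k=1}^{T}\|\nabla f(x^k)\|_1 \;\leq\; \frac{16\Delta}{T\gamma} \;+\; 16 L_0 d\,\gamma \;+\; C_1 L_0 d\,\gamma\log\tfrac{1}{\delta} \;+\; \frac{32\|\Vec{\sigma}\|_1}{\sqrt{M}} \;+\; \frac{C_2 d\|\nabla f(x^1)\|_1\log\tfrac{1}{\delta}}{T},
\end{equation*}
for absolute constants $C_1,C_2$. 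The last term is $O(1/T)$ and can be absorbed once $T$ is chosen in the dominant regime.

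For the \textbf{optimal finite-horizon tuning} I will choose $M$ to kill the noise term: $M \geq \max\{160/\kappa^2,\ 2^{16}\|\Vec{\sigma}\|_1^2/\varepsilon^2\}$ makes $32\|\Vec{\sigma}\|_1/\sqrt{M} \leq \varepsilon/8$. In the regime $\varepsilon \geq 8L_0/(L_1\sqrt{d})$, the stepsize $\gamma = 1/(48 L_1^\delta d^{3/2})$ already gives $L_0 d\,\gamma \log(1/\delta) \leq \varepsilon/16$ (using $L_0 \leq L_1 \varepsilon \sqrt{d}/8$), and then $T = \Theta(\Delta L_1^\delta d^{3/2}/\varepsilon)$ makes the $\Delta/(T\gamma)$ term less than $\varepsilon/8$; multiplying by $M$ gives the claimed $N$. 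In the regime $\varepsilon \leq 8L_0/(L_1\sqrt{d})$, the $L_1$-constraint is no longer the bottleneck, so I minimize the sum $\Delta/(T\gamma) + L_0^\delta d\,\gamma$ over $\gamma$, obtaining $\gamma = \sqrt{\Delta/(80 L_0^\delta d T)}$ and $T = \Theta(\Delta L_0^\delta d/\varepsilon^2)$; one must verify that this $\gamma$ satisfies the cap $\gamma \leq 1/(48 L_1^\delta d^{3/2})$, which follows exactly as in the proof of Theorem~\ref{thm: minibatch signsgd finite}.

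For the \textbf{arbitrary finite-horizon tuning} the idea is that since the horizon is fixed, we can afford $M$ growing polynomially in $T$. In the small-$\varepsilon$ regime I would take $\gamma_k = \gamma_0/\sqrt{T}$ (so $\Delta/(T\gamma) + L_0^\delta d \gamma = (\Delta/\gamma_0 + L_0^\delta d\,\gamma_0)/\sqrt{T}$) and $M_k = M_0 T$, giving $\|\Vec{\sigma}\|_1/\sqrt{M} = \|\Vec{\sigma}\|_1/\sqrt{M_0 T}$; requiring each term to be $\leq \varepsilon$ forces $T = \Theta((\Delta/\gamma_0 + L_0^\delta d\gamma_0)^2/\varepsilon^2 + \|\Vec{\sigma}\|_1^2/(M_0\varepsilon^2))$, and $N = TM_0 T$ yields the stated bound. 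In the large-$\varepsilon$ regime I would keep $\gamma_k = \gamma_0$ constant and choose $M_k = M_0 T^2$, so the noise term decays as $1/T$; the bottleneck becomes $\Delta/(T\gamma_0)\leq\varepsilon$, which gives $T = \Theta(\Delta/(\gamma_0 \varepsilon) + \|\Vec{\sigma}\|_1/(\sqrt{M_0}\varepsilon))$ and $N = T M_0 T^2$ as claimed.

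The main obstacle is purely bookkeeping: keeping track of the constants so that the three upper-bound terms (optimization gap $\Delta/(T\gamma)$, smoothness term $L_0 d\gamma$ weighted by $\log(1/\delta)$, and noise term $\|\Vec{\sigma}\|_1/\sqrt{M}$) each absorb a constant fraction of $\varepsilon$, and simultaneously verifying that the chosen $\gamma$ always satisfies the precondition $\gamma \leq 1/(48 L_1^\delta d^{3/2})$ from Lemma~\ref{lem: majority signsgd T update}. Once these are checked, the sample complexity is simply $N = T\cdot M$, and after summing the $1/\kappa^2$ floor with the $\|\Vec{\sigma}\|_1^2/\varepsilon^2$ contribution inside $M$, the stated bounds follow immediately.
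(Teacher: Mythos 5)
Your proposal is correct and follows essentially the same route as the paper: plugging constant parameters into the \algname{MajorityVote-SignSGD} Convergence Lemma~\ref{lem: majority signsgd T update} (which replaces the batching error $\|\Vec{\sigma}\|_1/B^{(\kappa-1)/\kappa}$ by $\|\Vec{\sigma}\|_1/\sqrt{M}$ under the floor $M\ge 160/\kappa^2$), splitting $\varepsilon$ among the optimization, smoothness, and noise terms, and verifying the stepsize cap exactly as in Theorem~\ref{thm: minibatch signsgd finite}. The only differences from the paper's proof are constant-level bookkeeping, which you correctly flag as the remaining work.
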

\begin{proof}
    
Plugging in constant stepsizes $\gamma_k \equiv \gamma$ implies $C_T = T\gamma^2, \gamma^{max} = \gamma$ into the bound \eqref{eq: majority signsgd convergence lemma} from Convergence Lemma \ref{lem: majority signsgd T update}, we have :
\begin{equation}
    \frac1T \sum\limits_{k=1}^{T} \|\nabla f (x^k)\|_1 \leq \frac{16\Delta}{T\gamma} + 192 L_0d\gamma \log(\nicefrac{1}{\delta})  + \frac{32 \|\Vec{\sigma}\|_1}{\sqrt{M_k}} + 96\frac{d\|\nabla f (x^1)\|_1}{T}  \log(\nicefrac{1}{\delta}). \label{eq: majorvote convergence}
\end{equation}
\textbf{Case $\varepsilon >  \frac{8L_0}{L_1\sqrt{d}}$, arbitrary tuning:} We use parameters $T, \gamma_k = \gamma_0, M_k = \max \{160/\kappa^2, M_0T^2\}$ to get:
$$\frac1T \sum\limits_{k=1}^{T} \|\nabla f (x^k)\|_1 \leq \frac{16\Delta  }{T\gamma_0} + 32 \frac{\|\Vec{\sigma}\|_1}{\sqrt{M_0} T} + 96\frac{d\|\nabla f (x^1)\|_1}{T}  \log(\nicefrac{1}{\delta}).$$
 Setting such $T$ that the first two terms become less than $\varepsilon$, we obtain the final complexity $N = T \cdot M_0T^2.$

\textbf{Case $\varepsilon \geq  \frac{8L_0}{L_1\sqrt{d}}$, optimal tuning:} We use stepsize $\gamma = \frac{1}{400L_1d\log\frac1\delta \sqrt{d}} \Rightarrow 192 L_0 d\gamma \log(\nicefrac{1}{\delta}) \leq \varepsilon/2$ and  batchsize $32\frac{\|\Vec{\sigma}\|_1}{\sqrt{M_k}} \leq \varepsilon/4 \Rightarrow M_k \equiv \max \left\{\frac{160}{\kappa^2},  \left(\frac{128\|\Vec{\sigma}\|_1}{\varepsilon}\right)^2\right\}$. The number of iterations  $T$ is chosen to bound the first term: 
$$\frac{16\Delta}{T\gamma}  = \frac{2560\Delta L_1\log \frac1\delta d^\frac32}{T} \leq \frac{\varepsilon}{4} \Rightarrow T = O\left(\frac{\Delta L_1  \log \frac1\delta d^\frac{3}{2}}{\varepsilon}\right).$$
The total number of oracle calls is:
\begin{eqnarray}
     N = O\left(\frac{\Delta L_1  \log(\nicefrac{1}{\delta} ) d^\frac{3}{2}}{\varepsilon}\left[\frac{1}{\kappa^2} +  \left(\frac{\|\Vec{\sigma}\|_1}{\varepsilon}\right)^2\right]\right). \notag 
\end{eqnarray}

\textbf{Case $\varepsilon <  \frac{8L_0}{L_1\sqrt{d}}$, arbitrary tuning:} We use parameters $T, \gamma_k = \frac{\gamma_0}{\sqrt{T}}, M_k = \max \{160/\kappa^2, M_0T\}$ to get:
$$\frac1T \sum\limits_{k=1}^{T} \|\nabla f (x^k)\|_1 \leq \frac{16\Delta}{\sqrt{T}\gamma_0} + 192 \frac{L_0d\gamma_0}{\sqrt{T}} \log(\nicefrac{1}{\delta})  + 32 \frac{\|\Vec{\sigma}\|_1}{\sqrt{M_0T}} + 96\frac{d\|\nabla f (x^1)\|_1}{T}  \log(\nicefrac{1}{\delta}).$$
 Setting such $T$ that the first two terms become less than $\varepsilon$, we obtain the final complexity $N = T \cdot M_0T.$

\textbf{Case $\varepsilon < \frac{8L_0}{L_1\sqrt{d}}$, optimal tuning:} We set the same batchsize $32\frac{\|\Vec{\sigma}\|_1}{\sqrt{M_k}} \leq \varepsilon/4 \Rightarrow M_k \equiv \max \left\{\frac{160}{\kappa^2},  \left(\frac{128\|\Vec{\sigma}\|_1}{\varepsilon}\right)^2\right\}$. The stepsize $\gamma$ is set to minimize the sum:
$$\min_\gamma \left[ \frac{16\Delta}{T\gamma} + 192 L_0 d\gamma \log(\nicefrac{1}{\delta}) \right] = 2\sqrt{\frac{3200\Delta L_0 d \log(\nicefrac{1}{\delta})}{T}} ,$$
it means that the stepsize $\gamma = \sqrt{\frac{4\Delta}{80 TL_0\log(\nicefrac{1}{\delta})d}}$. The number of iterations $T$ is chosen to satisfy $$2\sqrt{\frac{3200\Delta L_0  \log(\nicefrac{1}{\delta})d}{T}} \leq \frac{\varepsilon}{2} \Rightarrow T = O \left(\frac{\Delta L_0  \log(\nicefrac{1}{\delta})d}{\varepsilon^2}\right).$$
We only need to check whether condition $\gamma \leq \frac{1}{48L_1d\log\frac1\delta \sqrt{d}}$ holds:
\begin{eqnarray}
    \gamma &=& \sqrt{\frac{4\Delta}{80 TL_0\log(\nicefrac{1}{\delta})d}} = \sqrt{\frac{4\Delta}{ T} \frac{1}{80L_0\log(\nicefrac{1}{\delta})d}} \notag \\
    &\leq& \frac{\varepsilon}{4} \frac{1}{80L_0\log(\nicefrac{1}{\delta})d} \leq \frac{8L_0}{4L_1 \sqrt{d}} \frac{1}{80L_0\log(\nicefrac{1}{\delta})d} \notag \\
    &\leq& \frac{1}{48L_1d\log\frac1\delta \sqrt{d}}. \notag
\end{eqnarray}
Hence, we have the following bound for sample complexity
\begin{eqnarray}
N = O\left(\frac{\Delta L_0\log(\nicefrac{1}{\delta} ) d }{\varepsilon^2}\left[\frac{1}{\kappa^2} +  \left(\frac{\|\Vec{\sigma}\|_1}{\varepsilon}\right)^2\right]\right). 
\end{eqnarray}
\end{proof}

\begin{theorem}[\textbf{HP complexity for \algname{MajorityVote-SignSGD}, infinite horizon}] \label{thm: majority signsgd inifinite}
Consider lower-bounded $(L_0,L_1)$-smooth function $f$ (As. \ref{as: bounded}, \ref{as: smooth}) and HT gradient estimates corrupted by \textbf{unimodal and
symmetric HT noise with $\kappa > 0$} (As. \ref{as: pBCM}). Then Alg. \ref{alg:majorityvotesignSGDsingle} requires the sample complexity $N$  to achieve $\min \limits_{k \in \overline{1,T}} \|\nabla f(x^k)\|_1 \leq \varepsilon$ with probability at least $1-\delta$ for:

\textbf{Arbitrary tuning:}  Until plateau $\gamma_k = \gamma_0 \leq \frac{1}{48L^\delta_1d^\frac32}, M_k =  M_0k^2/\kappa^2$, after $\gamma_k = \frac{\gamma_0}{\sqrt{k}}, M_k = M_0k/\kappa^2$:
\begin{eqnarray}
    \varepsilon \geq \frac{8L_0}{L_1\sqrt{d}} &\Rightarrow& N = \tilde{O}\left( \frac{M_0(\Delta/\gamma_0)^3 + \|\Vec{\sigma}\|_1^3/M_0^2}{\kappa^2\varepsilon^3} \right), \notag \\
    \varepsilon \ll \frac{8L_0}{L_1\sqrt{d}} &\Rightarrow& N =  \tilde{O}\left(\frac{M_0(L_0^\delta \gamma_0 d + \Delta/\gamma_0)^4 + \|\Vec{\sigma}\|_1^4/M_0}{\kappa^2\varepsilon^4}  \right). \notag
\end{eqnarray} 
\textbf{Optimal tuning:} First $\frac{64\Delta L_1^\delta L_1 d^2}{L_0}$ steps,  $\gamma_k = \frac{1}{48L_1^\delta d^\frac32}, M_k = \max \left\{160/\kappa^2, \left(16  k \right)^2\right\}$, after $\gamma_k = \sqrt{\frac{\Delta}{20 d L_0^\delta k}}, M_k = \max \left\{160/\kappa^2, 16  k \right\}:$
\begin{eqnarray}
    \varepsilon \geq \frac{8L_0}{L_1\sqrt{d}} &\Rightarrow& N  = \tilde{O}\left(\frac{(\Delta L_1^\delta d^\frac32 + \|\Vec{\sigma}\|_1)^3}{\kappa^2\varepsilon^3}\right), \notag \\
\varepsilon \ll \frac{8L_0}{L_1\sqrt{d}} &\Rightarrow& N  = \tilde{O}\left(\frac{(\Delta L_0^\delta d + \|\Vec{\sigma}\|^2_1)^2}{\kappa^2 \varepsilon^4}\right).
\end{eqnarray}
where $\Delta = f(x^1) - f^*, L_0^\delta = L_0 \log(\nicefrac{1}{\delta}), L_1^\delta = L_1 \log(\nicefrac{1}{\delta}).$
\end{theorem}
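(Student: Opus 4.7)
The plan is to mirror the proof of Theorem~\ref{thm: minibatch signsgd inifinite}, but starting from the Majority Voting Convergence Lemma~\ref{lem: majority signsgd T update} instead of Lemma~\ref{lem: signsgd T update}. Dividing both sides of \eqref{eq: majority signsgd convergence lemma} by $\sum_{k=1}^T \gamma_k$, I convert the weighted sum into the $\min$-metric bound
\begin{eqnarray}
\min_{k \in \overline{1,T}} \|\nabla f(x^k)\|_1 \leq \frac{16\Delta}{\sum_k \gamma_k} + \frac{16 L_0 d \sum_k \gamma_k^2}{\sum_k \gamma_k} + \frac{32 \|\Vec{\sigma}\|_1 \sum_k \gamma_k/\sqrt{M_k}}{\sum_k \gamma_k} + \frac{96 d(\gamma^{max}\|\nabla f(x^1)\|_1 + 2 C_T L_0)\log\frac1\delta}{\sum_k \gamma_k}. \notag
\end{eqnarray}
The only structural difference from the \algname{minibatch-SignSGD} proof is that the noise term scales as $1/\sqrt{M_k}$ rather than $1/B_k^{(\kappa-1)/\kappa}$, and there is the additional base-level constraint $M_k \geq 160/\kappa^2$ coming from the lemma's hypothesis.

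Next, I would handle the four regimes one by one by plugging in the prescribed schedules and evaluating standard partial sums. For the two arbitrary-tuning cases, constant $\gamma_k = \gamma_0$ with $M_k = M_0 k^2/\kappa^2$ (phase one) and $\gamma_k = \gamma_0/\sqrt{k}$ with $M_k = M_0 k/\kappa^2$ (phase two) yield exactly the same arithmetic as in the minibatch proof, except that now $\sum_k \gamma_k/\sqrt{M_k}$ reduces to $\tilde O(\kappa/(M_0^{1/2} T))$ or $\tilde O(\kappa \gamma_0/(M_0^{1/2} \sqrt{T}))$. Choosing $T$ so that every term is $\le \varepsilon$ and computing $N = \sum_{k=1}^T M_k = \tilde O(M_0 T^3/\kappa^2)$ or $\tilde O(M_0 T^2/\kappa^2)$ respectively gives the stated rates. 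For the two optimal-tuning cases, I use the same two-phase schedule as in Theorem~\ref{thm: minibatch signsgd inifinite}: the constant stepsize $\gamma_k = 1/(48 L_1^\delta d^{3/2})$ with $M_k = (16k)^2$ for $\varepsilon \ge 8L_0/(L_1\sqrt d)$, and the decreasing stepsize $\gamma_k = \sqrt{\Delta/(20 d L_0^\delta k)}$ with $M_k = 16k$ for $\varepsilon \ll 8L_0/(L_1\sqrt d)$. The batchsize exponents are chosen precisely so that $\sum_k \gamma_k/\sqrt{M_k}$ telescopes into a harmonic-style sum, producing only a logarithmic factor.

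The threshold arguments (neglecting the first $O(\Delta L_1^\delta L_1 d^2/L_0)$ iterations when $\varepsilon \ll 8L_0/(L_1\sqrt d)$ and the verification $\gamma_k \le 1/(48 L_1^\delta d^{3/2})$ in the decreasing-stepsize phase) carry over verbatim. Finally, for the total oracle complexity I sum $\sum_k M_k$: in the optimal-tuning fast phase this is $\sum_{k=1}^T \max\{160/\kappa^2,(16k)^2\} = \tilde O(T^3/\kappa^2 \vee T^3)$, and in the slow phase $\sum_{k=1}^T \max\{160/\kappa^2, 16k\} = \tilde O(T^2/\kappa^2 \vee T^2)$, which after substituting the $T$ found above yields the advertised bounds $\tilde O((\Delta L_1^\delta d^{3/2} + \|\Vec{\sigma}\|_1)^3/(\kappa^2\varepsilon^3))$ and $\tilde O((\Delta L_0^\delta d + \|\Vec{\sigma}\|_1^2)^2/(\kappa^2\varepsilon^4))$.

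The main obstacle is primarily bookkeeping rather than a conceptual difficulty: one must choose the batchsize growth rates so that, simultaneously, (i) the noise sum is absorbed by $\varepsilon$ up to logarithmic factors, (ii) the base constraint $M_k \ge 160/\kappa^2$ is trivially dominated by the growing profile, and (iii) the total $\sum_k M_k$ does not blow up the complexity beyond the target rate. Getting the exponents right here (in particular $(16k)^2$ versus $16k$ in the two phases, matching the $\sqrt{M_k}$-scaling of the noise term) is the step that requires care; everything else is mechanical substitution into the bound of Lemma~\ref{lem: majority signsgd T update}.
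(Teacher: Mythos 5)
Your proposal is exactly the paper's argument: the paper's own proof of this theorem is a one-line reduction to the infinite-horizon \algname{minibatch-SignSGD} proof (Theorem~\ref{thm: minibatch signsgd inifinite}) with the noise-reduction exponent set to the $\kappa=2$ value (i.e.\ $1/\sqrt{M_k}$ in place of $1/B_k^{(\kappa-1)/\kappa}$) and the extra constraint $M_k \ge 160/\kappa^2$ from Lemma~\ref{lem: majority signsgd T update}, which is precisely the substitution and bookkeeping you carry out. Your more detailed write-up of the four regimes is consistent with that reduction, so the approach matches the paper's.
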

\begin{proof}
    The proof is similar to the proof of Theorem \ref{thm: minibatch signsgd inifinite} with  $\kappa = 2$ and additional condition $M_k \geq 160/\kappa^2$.
\end{proof}

\subsection{Proof of \algname{M-SignSGD} Complexity Theorem \ref{thm:momentum SignSGD}} \label{subsec: MSignSGD proof}
\begin{theorem}[\textbf{Complexity for \algname{M-SignSGD} in expectation, full version}]
Consider lower-bounded $(L_0,L_1)$-smooth function $f$ (As. \ref{as: bounded}, \ref{as: smooth}) and HT gradient estimates (As. \ref{as: pBCM}). Then Alg. \ref{alg:SignSGD-M} requires $T$ iterations  to achieve  $\frac{1}{T} \sum_{k=1}^{T}  \EE \left[ \|\nabla f(x^k)\|_1 \right]  \leq \varepsilon$ starting with $\Delta = f(x^1) - f^*$:

\textbf{Optimal tuning for $\varepsilon \geq \frac{3L_0}{L_1}$:} $\beta_k \equiv 1 - \min\left\{1, \left(\frac{\Delta L_1 \sqrt{d}}{T \|\Vec{\sigma}\|_\kappa}\right)^\frac{\kappa}{2\kappa - 1}\right\}, \gamma_k  \equiv \frac{1 - \beta_k}{8} \frac{1}{L_1d}$ 
\begin{equation}
     T = O\left(\frac{\Delta L_1d}{\varepsilon } \left(1 + \left(\frac{\sqrt{d}\|\Vec{\sigma}\|_\kappa}{\varepsilon}\right)^\frac{\kappa }{\kappa  -1}\right)\right), \notag
\end{equation}
\textbf{Optimal tuning for $\varepsilon < \frac{3L_0}{L_1}$:} $1  - \beta_k \equiv   1 - \min\left\{1, \left(\frac{\Delta L_0}{T \|\Vec{\sigma}\|_\kappa^2}\right)^\frac{\kappa}{3\kappa - 2}  \right\}, \gamma_k  \equiv \sqrt{\frac{\Delta (1 - \beta_k)}{T L_0 d}}$ 
\begin{equation}
     T = O\left(\frac{\Delta L_0d}{\varepsilon^2 } \left(1 + \left(\frac{\sqrt{d}\|\Vec{\sigma}\|_\kappa}{\varepsilon}\right)^\frac{\kappa }{\kappa  -1}\right)\right), \notag
\end{equation}
\textbf{Arbitrary tuning for $\varepsilon \geq \frac{3L_0}{L_1}$:}  $T, \beta_k \equiv 1 - \nicefrac{1}{T^\frac{2}{3}}, \gamma_k \equiv \gamma_0 (1 - \beta_k), \gamma_0 \leq \nicefrac{1}{8dL_1}$:
$$T = O\left(\left(\frac{\Delta }{\gamma_0 \varepsilon }\right)^3 +  \left(\frac{\sqrt{d}\|\Vec{\sigma}\|_\kappa}{\varepsilon}\right)^\frac{3\kappa }{2(\kappa  -1)}\right),$$
\textbf{Arbitrary tuning for $\varepsilon < \frac{3L_0}{L_1}$:}  $T, \beta_k \equiv 1 - \nicefrac{1}{\sqrt{T}}, \gamma_k \equiv \gamma_0  T^{-\frac34}$:
$$ T = O\left(\frac{(\nicefrac{\Delta}{\gamma_0} + L_0d \gamma_0)^4} {\varepsilon^4} + \left(\frac{\sqrt{d}\|\Vec{\sigma}\|_\kappa}{\varepsilon}\right)^\frac{2\kappa}{\kappa-1}\right).$$

\end{theorem}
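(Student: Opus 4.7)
The plan is to combine a one-step descent estimate with a bound on the momentum error $\epsilon^k := m^k - \nabla f(x^k)$, decomposed into a heavy-tailed stochastic part and a deterministic drift part. Applying Lemma \ref{lem: single update step} with $A = I$ and $m = m^k$ yields
\begin{equation*}
f(x^{k+1}) - f(x^k) \leq -\gamma_k \|\nabla f(x^k)\|_1 + 2\gamma_k \sqrt{d}\,\|\epsilon^k\|_2 + \tfrac{1}{2}(L_0 + L_1\|\nabla f(x^k)\|_2)\exp(\gamma_k L_1 \sqrt{d})\,\gamma_k^2 d.
\end{equation*}
Under the stated stepsize rules one has $\gamma_k L_1 d \lesssim 1$, so the exponential is $O(1)$; using the norm relation \eqref{eq: norm relation} the $L_1\|\nabla f(x^k)\|_2$ contribution can be absorbed into the $-\gamma_k\|\nabla f(x^k)\|_1$ drift, leaving a clean descent of the form $-(\gamma_k/2)\|\nabla f(x^k)\|_1 + 2\gamma_k \sqrt{d}\,\|\epsilon^k\|_2 + c L_0 d\gamma_k^2$. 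Summing in $k$, taking expectation, and telescoping the LHS to $\Delta$ reduces the problem to controlling $\sum_k \gamma_k \EE[\|\epsilon^k\|_2]$.

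Second, I would unroll the constant-$\beta$ momentum as $\epsilon^k = \beta^{k-1}(m^1-\nabla f(x^1)) + (1-\beta)\sum_{j=1}^k \beta^{k-j}[\nabla f(x^j,\xi^j)-\nabla f(x^j)] + (1-\beta)\sum_{j=1}^k \beta^{k-j}[\nabla f(x^j)-\nabla f(x^k)]$ and bound the three pieces separately. The middle (martingale) piece is a weighted sum of coordinate-wise heavy-tailed noise vectors; applying the HT Batching Lemma \ref{lem: batching p} to the rescaled MDS together with $\|\cdot\|_2\leq\|\cdot\|_\kappa$ from \eqref{eq: norm relation} and the per-coordinate bound $\EE[\|\nabla f(x,\xi)-\nabla f(x)\|_\kappa^\kappa]\leq\|\Vec{\sigma}\|_\kappa^\kappa$ yields an expected $\kappa$-th moment of order $(1-\beta)^{\kappa-1}\|\Vec{\sigma}\|_\kappa^\kappa$, so Jensen gives $\EE\|\cdot\|_2\lesssim(1-\beta)^{(\kappa-1)/\kappa}\|\Vec{\sigma}\|_\kappa$. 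The last (drift) piece is controlled by Lemma \ref{lem: L_0,L_1 smoothness}: $\|\nabla f(x^j)-\nabla f(x^k)\|_2\leq(L_0+L_1 G)\sqrt{d}\sum_{i=j}^{k-1}\gamma_i$ with $G:=\max_{i\leq T}\|\nabla f(x^i)\|_2$, and the geometric weight $\sum_j \beta^{k-j}(k-j)$ contributes a factor $\beta/(1-\beta)^2$, which combined with the $(1-\beta)$ prefactor collapses to $O((L_0+L_1 G)\gamma\sqrt{d}/(1-\beta))$ for constant $\gamma_k\equiv\gamma$.

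Plugging these bounds into the telescoped descent and dividing by $\sum_k\gamma_k$ produces a master inequality of schematic form
\begin{equation*}
\tfrac{1}{T}\sum_{k=1}^T \EE\|\nabla f(x^k)\|_1 \lesssim \tfrac{\Delta}{T\gamma} + L_0 d\gamma + \sqrt{d}(1-\beta)^{(\kappa-1)/\kappa}\|\Vec{\sigma}\|_\kappa + \tfrac{L_0 d\gamma}{1-\beta} + \tfrac{L_1 d\gamma}{1-\beta}\,G.
\end{equation*}
In the fast regime $\varepsilon\geq 3L_0/L_1$, the prescribed choice $\gamma=(1-\beta)/(8L_1 d)$ makes the coefficient of $G$ at most $1/8$, which can be absorbed back into the LHS, and the remaining bottleneck is the balance between $\Delta L_1 d/((1-\beta)T)$ and $\sqrt{d}(1-\beta)^{(\kappa-1)/\kappa}\|\Vec{\sigma}\|_\kappa$, whose minimization in $1-\beta$ reproduces the announced scaling $1-\beta\sim(\Delta L_1\sqrt{d}/(T\|\Vec{\sigma}\|_\kappa))^{\kappa/(2\kappa-1)}$ and complexity $T=O((\Delta L_1 d/\varepsilon)(1+(\sqrt{d}\|\Vec{\sigma}\|_\kappa/\varepsilon)^{\kappa/(\kappa-1)}))$. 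In the slow regime $\varepsilon\leq 3L_0/L_1$, one first balances $\Delta/(T\gamma)\sim L_0 d\gamma$ by $\gamma=\sqrt{\Delta(1-\beta)/(TL_0 d)}$, then balances the residual $L_0$-term against the noise to pick $1-\beta\sim(\Delta L_0/(T\|\Vec{\sigma}\|_\kappa^2))^{\kappa/(3\kappa-2)}$. The arbitrary-tuning bounds follow by substituting the prescribed $\beta_k\equiv 1-T^{-a}$ and $\gamma_k\propto(1-\beta_k)$ (fast) or $\gamma_k\propto T^{-b}$ (slow) into the same master inequality.

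The main obstacle is the circular dependence on $G$ in the drift part of the momentum error, since $G$ is essentially the quantity we are trying to lower-bound on average via the descent inequality. I would resolve it exactly as in the proof of Lemma \ref{lem: signsgd T update}, by forcing the coefficient $L_1 d\gamma/(1-\beta)$ of $G$ to be strictly smaller than the descent coefficient in front of $\|\nabla f(x^k)\|_1$ via the stepsize condition $\gamma\lesssim(1-\beta)/(L_1 d)$; this self-consistently absorbs the drift-induced gradient growth into the negative term and produces a linear-in-$L_1$ bound, avoiding the exponential blow-up that a naive use of $(L_0,L_1)$-smoothness would suggest.
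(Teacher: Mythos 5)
Your proposal follows essentially the same route as the paper's proof: the one-step bound from Lemma \ref{lem: single update step} with $A=I$, unrolling the constant-$\beta$ momentum into an initial term, a martingale noise term handled by the HT Batching Lemma \ref{lem: batching p} (yielding the $(1-\beta)^{(\kappa-1)/\kappa}\|\Vec{\sigma}\|_\kappa$ factor), and a drift term of order $(L_0+L_1\cdot\text{grad})\gamma\sqrt{d}/(1-\beta)$, followed by the same parameter balancing in $(1-\beta)$ and $\gamma$ for each regime. The master inequality you arrive at and the tuning choices match the paper's.

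The one genuine issue is your treatment of the drift term via $G:=\max_{i\leq T}\|\nabla f(x^i)\|_2$. After summing the descent inequality over $k$, the drift contributes a term of order $T\cdot\frac{L_1 d\gamma}{1-\beta}\,G$, and you claim that making its coefficient at most $1/8$ lets you absorb it into the left-hand side. But the left-hand side controls the \emph{average} $\frac{1}{T}\sum_k\EE\|\nabla f(x^k)\|_1$, and a maximum cannot be absorbed into an average (the max may vastly exceed it), so the absorption fails as written. The paper avoids this by never coarsening to a max: each $s^i=\nabla f(x^{i-1})-\nabla f(x^i)$ is bounded by $(L_0+L_1\|\nabla f(x^i)\|_2)\lambda$ with its own per-iterate norm, and after swapping the order of summation in $\sum_k\gamma_k\sum_{i\leq k}\beta^{k-i+1}\EE\|\nabla f(x^i)\|_1$ each index $i$ receives a coefficient at most $\frac{2\gamma L_1\lambda\sqrt{d}\beta}{1-\beta}$, which the condition $\gamma\leq\frac{1-\beta}{8L_1 d}$ makes smaller than $\gamma/2$ \emph{term by term}, so it can be subtracted from the corresponding $-\gamma\EE\|\nabla f(x^i)\|_1$. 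Your proof goes through once you keep the per-iterate gradient norms and perform this summation swap instead of introducing $G$; the rest of your argument, including the noise moment bound and the regime-dependent tuning, is sound.
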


In this proof, we generalize the proof of Theorem $1$ from \cite{sun2023momentum} for HT noise. 
\begin{proof}

    Consider the $k$-th step of \algname{M-SignSGD}. We use $(L_0, L_1)$ step update Lemma \ref{lem: single update step} to estimate:
    \begin{eqnarray}
        f(x^{k+1}) - f(x^k) &\leq& \la \nabla f (x^k), x^{k+1} - x^k \ra + \frac{L_0 + L_1\|\nabla f(x^k)\|_2 }{2}\exp(L_1\|x^{k+1} - x^k\|_2)\|x^{k+1} - x^k\|_2^2 \notag \\ &\leq&  - \gamma_k\| \nabla f (x^k)\|_1 + 2\gamma_k \sqrt{d} \| \epsilon^k\|_2 + \frac{L_0d \gamma_k^2}{2}\exp(L_1\sqrt{d}\gamma_k) \notag \\ &+&  \frac{ L_1d \gamma_k\exp(L_1\sqrt{d}\gamma_k) }{2}\cdot \gamma_k \|\nabla f(x^k)\|_1.\label{eq: m-sign l0 l1 step before E} 
    \end{eqnarray}

Since we set constant steps sizes and momentum, we denote them as $\gamma \equiv \gamma_k$ and $\beta \equiv \beta_k$, respectively. We use notations $\epsilon^k := m^k - \nabla f(x^k)$ and $\theta^k := g^k - \nabla f(x^k)$. Therefore, we have at $k$-th step values:
\begin{eqnarray}
    m^k &=& \beta m^{k-1} + (1-\beta) g^k= \gamma (\epsilon^{k-1} + \nabla f(x^{k-1})) + (1-\gamma)(\theta^k + \nabla f(x^k)),\notag \\
    \epsilon^k &=& m^k - \nabla f(x^k) = \beta \epsilon^{k-1} + \beta(\underset{=:s^k}{\underbrace{\nabla f (x^{k-1}) - \nabla f (x^k)}} ) + (1- \beta)\theta^k, \notag\\
    \epsilon^k &=& m^k - \nabla f(x^k) = \beta \epsilon^{k-1} + \beta s^k + (1- \beta)\theta^k.\notag 
\end{eqnarray}

Unrolling the recursion, we obtain an explicit formula (upper index of $\beta$ is its power):
\begin{eqnarray}
\epsilon^{k} &=& \beta^{k-1}\epsilon^1 + \sum_{i=2}^{k} \beta^{k-i + 1} s^i + (1-\beta) \sum_{i=2}^{k} \beta^{k-i} \theta^i. \label{eq: unrolling m-signsgd}
\end{eqnarray}

From $(L_0, L_1)-$smoothness of the function $f$ (Lemma \ref{lem: L_0,L_1 smoothness}) follows the bound:
$$\|s^k\|_2 \leq (L_0 + L_1\|\nabla f(x^k)\|_2)\exp(L_1\|x^k-x^{x + 1}\|_2)\|x^k-x^{k + 1}\|_2 = (L_0 + L_1\|\nabla f(x^k)\|_2)\exp(L_1\gamma_k\sqrt{d})\gamma_k\sqrt{d} $$
Denote $\lambda := \exp(L_1\gamma_k\sqrt{d})\gamma_k\sqrt{d}$. Hence, the norm of \eqref{eq: unrolling m-signsgd} can be bounded as:
$$\|\epsilon^k\|_2 \leq \beta^{k-1}\|\epsilon^1\|_2 + L_0\lambda \sum_{i=2}^{k} \beta^{k-i + 1} + L_1\lambda \sum_{i=2}^{k}\beta^{k-i + 1}  \|\nabla f(x^k)\|_2+ (1-\beta) \|\sum_{i=2}^k \beta^{k-i} \theta^i\|_2.$$

We notice that variables $\{\theta_i\}$ are martingale difference sequence from Lemma \ref{lem: batching p} which we plan to use. Due to the formal definition of $\theta^i = g^i - \nabla f(x^i) = \nabla f(x^i, \xi_i) - \nabla f(x^i)$ and \algname{M-SingSGD} step, the conditioning on $ \theta^{i-1}, \dots, \theta^1$  with randomness $\xi_1, \dots, \xi_{i-1} $ is equivalent to the conditioning on point s $x^{i},\dots ,x^{2}$. Hence, we show by definition of martingale difference sequence that  $$\EE[\theta^i| \theta^{i-1}, \dots, \theta^1 ] = \EE[\theta^i|x^{i},\dots ,x^{2}] = \EE[\nabla f(x^i, \xi_i) - \nabla f(x^i)|x^{i},\dots ,x^{2}] = 0.$$

To take math expectation from both sides, we first take it from the term
\begin{eqnarray}
    \EE \left[ \|\sum_{i=2}^k \beta^{k-i} \theta^i\|_2 \right]\leq \left(\EE \left[\|\sum_{i=2}^k \beta^{k-i} \theta^i\|_2^\kappa\right]\right)^\frac1\kappa \overset{\text{Lem. }\ref{lem: batching p} }{\leq} \left(\sum_{i=2}^k 2 \EE \left[\| \beta^{(k-i)}\theta^i\|_2^\kappa\right]\right)^\frac1\kappa \leq  \left(\sum_{i=2}^k 2 \beta^{\kappa(k-i)}\EE \left[\| \theta^i\|_2^\kappa\right]\right)^\frac1\kappa. \notag \label{eq: m-signsgd 1} %\leq \frac{2\|\Vec{\sigma}\|_\kappa}{(1 - \beta^\kappa)^\frac1\kappa}.
\end{eqnarray}
For each $i \in \overline{2,T}$, we estimate $\EE \left[\| \theta^i\|_2^\kappa\right]$ as 
\begin{eqnarray}
    \EE \left[\| \theta^i\|_2^\kappa\right] \overset{\eqref{eq: norm relation}}{\leq } \EE \left[\| \theta^i\|_\kappa^\kappa\right] = \EE \left[\sum_{j=1}^d| g^k_j - \nabla f(x^k)_j|^\kappa\right] \overset{As. \ref{as: pBCM}}{\leq }  \sum_{j=1}^d \sigma^\kappa_j = \|\Vec{\sigma}\|_\kappa^\kappa. 
\end{eqnarray}
We continue bounding \eqref{eq: m-signsgd 1} with
\begin{eqnarray}
    \eqref{eq: m-signsgd 1} \leq \left(\sum_{i=2}^k 2 \beta^{\kappa(k-i)}\|\Vec{\sigma}\|_\kappa^\kappa\right)^\frac1\kappa \leq \frac{2\|\Vec{\sigma}\|_\kappa}{(1 - \beta^\kappa)^\frac1\kappa}. \notag
\end{eqnarray}
Therefore, the final math expectation can be calculated as:
\begin{eqnarray}
     \EE\|\epsilon^k\|_2 &\leq& \beta^{k-1} \EE \|\epsilon^1\|_2 + \frac{L\sqrt{d}\gamma}{1 - \beta}   + \frac{2(1-\beta) \|\Vec{\sigma}\|_\kappa}{(1 - \beta^\kappa)^\frac1\kappa}.
\end{eqnarray}

%% =================================== {\color{red} add terms}

Then, we take math expectation from \eqref{eq: m-sign l0 l1 step before E}:
\begin{eqnarray}
   \EE[f(x^{k+1})] - \EE[f(x^k)] &\leq& - \gamma \EE[\|\nabla f(x^k)\|_1] + 2\gamma \sqrt{d} \beta^{k-1} \EE \|\epsilon^1\|_2 \notag\\ &+& \notag L_0\lambda \frac{2\gamma \sqrt{d}}{1 - \beta} + L_1\lambda 2\gamma \sqrt{d} \sum_{i=2}^{k}\beta^{k-i + 1} \EE \|\nabla f(x^k)\|_1 + \frac{4\gamma \sqrt{d}(1-\beta) \|\Vec{\sigma}\|_\kappa}{(1 - \beta^\kappa)^\frac1\kappa} \notag \\&+& \frac{L_0\sqrt{d} \gamma}{2}\lambda \notag  + \frac{ L_1 \sqrt{d} \gamma }{2}\lambda \EE \|\nabla f(x^k)\|_1. \notag 
\end{eqnarray}
Summing it over $k$, we derive
\begin{eqnarray}
   f^* - f(x^1)&\leq& - \gamma \sum_{k=1}^T\EE\|\nabla f(x^k)\|_1 + 2\gamma T \sqrt{d} \beta^{k-1} \EE \|\epsilon^1\|_2\notag \\ &+&  L_1\lambda 2\gamma \sqrt{d}  \sum_{k=1}^T\sum_{i=2}^{k}\beta^{k-i + 1} \EE \|\nabla f(x^i)\|_1 + \frac{4\gamma T \sqrt{d}(1-\beta) \|\Vec{\sigma}\|_\kappa}{(1 - \beta^\kappa)^\frac1\kappa} \notag \\ &+& \frac{L_0 T \sqrt{d} \gamma}{2}\lambda   + \frac{ L_1 \sqrt{d} \gamma }{2}\lambda \sum_{k=1}^T \EE \|\nabla f(x^k)\|_1. \label{eq:m-signsgd prefinal}
\end{eqnarray}
Changing the order of summation in the right part of \eqref{eq:m-signsgd prefinal}, we obtain:
\begin{eqnarray} 
2\gamma L_1\lambda \sqrt{d} \sum_{k=1}^T  
 \left( \sum_{i=2}^{k}\beta^{k-i + 1} \EE \|\nabla f(x^i)\|_1 \right) \notag \notag  &=&   2\gamma L_1\lambda \sqrt{d}  \sum_{i=2}^T\left( \sum_{k = i}^{T}\beta^{k-i + 1} \EE \|\nabla f(x^i)\|_1 \right) 
 \notag \\ &=&   2\gamma L_1\lambda \sqrt{d}  \sum_{i=2}^T \beta^{-i} \left( \sum_{k = i}^{T}\beta^{k + 1} \right) \EE \|\nabla f(x^i)\|_1 \notag \\ &=&  2\gamma L_1\lambda \sqrt{d}  \sum_{i=2}^T \beta^{-i + 1} \beta^i \left(\frac{1 - \beta^{T - i}}{1 - \beta} \right) \EE \|\nabla f(x^i)\|_1 \notag  \\ &\leq&   2\gamma L_1\lambda \sqrt{d}  \sum_{i=2}^T  \beta \left(\frac{1}{1 - \beta} \right) \EE \|\nabla f(x^i)\|_1. \notag  
    \label{eq: m-signsgd 2} 
\end{eqnarray}
Finally, we have the bound 
\begin{eqnarray}
   f^* - f(x^1)  &\leq&  - \gamma \sum_{k=1}^T\EE\|\nabla f(x^k)\|_1 + \frac{2\gamma \sqrt{d} \EE \|\epsilon^1\|_2}{1 - \beta}  \notag\\ &+& 2\gamma L_1\lambda \sqrt{d} \cdot \frac{\beta}{1 - \beta} \sum_{k = 1}^T \EE \|\nabla f(x^k)\|_1 \notag + \frac{4\gamma T \sqrt{d}(1-\beta)\|\Vec{\sigma}\|_\kappa}{(1 - \beta^\kappa)^{1/\kappa}} \\ &+& \frac{L_0 T \sqrt{d} \gamma}{2 (1 - \beta)}\lambda + \frac{L_1 \sqrt{d} \gamma}{2}\lambda \sum_{k=1}^T \EE \|\nabla f(x^k)\|_1 \notag 
 \\ &\leq&  \left(- \gamma +  \frac{2\gamma L_1\lambda \sqrt{d}\beta}{1 - \beta} +\frac{L_1 \sqrt{d} \gamma}{2}\lambda  \right) \sum_{k=1}^T\EE\|\nabla f(x^k)\|_1 \notag \\ &+&   \frac{2\gamma \sqrt{d} \EE \|\epsilon^1\|_2}{1 - \beta} + \frac{4\gamma T \sqrt{d}(1-\beta)\|\Vec{\sigma}\|_\kappa}{(1 - \beta^\kappa)^{1/\kappa}} + \frac{L_0 T \sqrt{d} \gamma}{2 (1 - \beta)}\lambda.
 \label{eq: m-signsgd final} 
\end{eqnarray}
Let us set stepsize $\gamma$ such that 
$$\frac{2\gamma L_1\lambda \sqrt{d}\beta}{1 - \beta} +\frac{L_1 \sqrt{d} \gamma}{2}\lambda \leq \frac{3\gamma^2 L_1 d \exp(L_1 d \gamma)}{1 - \beta}  \leq \gamma/2 \Rightarrow\gamma \leq \frac{1 - \beta}{8} \frac{1}{L_1d}. $$
Thus, we obtain
\begin{eqnarray}
    f^* - f(x^1)  &\leq&  -\frac{\gamma}{2} \sum_{k=1}^T\EE\|\nabla f(x^k)\|_1 + \frac{2\gamma \sqrt{d} \EE \|\epsilon^1\|_2}{1 - \beta} + 4\gamma T \sqrt{d}(1-\beta)^\frac{\kappa - 1}{\kappa}\|\Vec{\sigma}\|_\kappa + \frac{L_0 T d \gamma^2}{(1 - \beta)}, \notag \\
    \frac{1}{T} \sum_{k=1}^T\EE\|\nabla f(x^k)\|_1 &\leq& \frac{2(f^* - f(x^1))}{\gamma T} + \frac{4 \sqrt{d}  \EE \|\epsilon^1\|_2}{T(1 - \beta)} + 8 \sqrt{d}(1-\beta)^\frac{\kappa - 1}{\kappa}\|\Vec{\sigma}\|_\kappa + \frac{2L_0 d \gamma}{(1 - \beta)}.
\end{eqnarray}
\textbf{Case $\varepsilon \geq \frac{3L_0}{L_1}$, arbitrary tuning:} We set $1 - \beta = \frac{1}{T^\frac23}, \gamma = \frac{\gamma_0(1-\beta)}{8d^\frac32}$ and obtain
$$\frac{1}{T} \sum_{k=1}^T \EE\|\nabla f(x^k)\|_1 \leq \frac{16\Delta d^\frac32}{\gamma_0 T^\frac13} +  \frac{8\sqrt{d} \|\Vec{\sigma}\|_\kappa}{T^\frac{2(\kappa - 1)}{3\kappa}} + \frac{\varepsilon}{4}.$$
Next, we choose $T$ to limit $\frac{16\Delta d^\frac32}{\gamma_0 T^\frac13}\leq \frac{\varepsilon}{2}$ and $\frac{8\sqrt{d} \|\Vec{\sigma}\|_\kappa}{T^\frac{2(\kappa - 1)}{3\kappa}} \leq \frac{\varepsilon}{4}:$
$$T = O\left(\left(\frac{\Delta d^\frac32}{\gamma_0 \varepsilon }\right)^3 +  \left(\frac{\sqrt{d}\|\Vec{\sigma}\|_\kappa}{\varepsilon}\right)^\frac{3\kappa }{2(\kappa  -1)}\right).$$

\textbf{Case $\varepsilon \geq \frac{3L_0}{L_1}$, optimal tuning:} 
We choose the stepsize $\gamma  = \frac{1 - \beta}{8} \frac{1}{L_1d} \leq \frac{1 - \beta}{8} \frac{1}{L_1d}$ and get:
\begin{eqnarray}
    \frac{1}{T} \sum_{k=1}^T\EE\|\nabla f(x^k)\|_1 &\leq& \frac{16\Delta L_1 d}{T(1 - \beta)} + \frac{4 \sqrt{d}  \EE \|\epsilon^1\|_2}{T(1 - \beta)} + 8 \sqrt{d}(1-\beta)^\frac{\kappa - 1}{\kappa}\|\Vec{\sigma}\|_\kappa + \frac{4L_0}{L_1} \notag \\
    &\leq& \frac{16(\Delta L_1 + \EE \|\epsilon^1\|_2) d}{T(1 - \beta)}+ 8 \sqrt{d}(1-\beta)^\frac{\kappa - 1}{\kappa}\|\Vec{\sigma}\|_\kappa + \frac{4\varepsilon}{3}. \notag
\end{eqnarray}
Then, we  choose $1 - \beta = \min\left\{1, \left(\frac{\Delta L_1 \sqrt{d}}{T \|\Vec{\sigma}\|_\kappa}\right)^\frac{\kappa}{2\kappa - 1}\right\}$ to obtain
\begin{eqnarray}
    \min_{\beta \in [0,1)} \left[ \frac{16\Delta L_1 d}{T(1 - \beta)}+ 8 \sqrt{d}(1-\beta)^\frac{\kappa - 1}{\kappa}\|\Vec{\sigma}\|_\kappa \right] \leq 24\sqrt{d} \left( \frac{\Delta L_1 \sqrt{d}}{T}\right)^\frac{\kappa - 1}{2\kappa - 1}\|\Vec{\sigma}\|_\kappa^\frac{\kappa}{2\kappa - 1} + \frac{24\Delta L_1 d}{T}.
\end{eqnarray}
Finally, we choose number of iterations $T$ to get:
$$24\sqrt{d} \left( \frac{\Delta L_1 \sqrt{d}}{T}\right)^\frac{\kappa - 1}{2\kappa - 1}\|\Vec{\sigma}\|_\kappa^\frac{\kappa}{2\kappa - 1} + \frac{24\Delta L_1 d}{T} \leq \varepsilon  \Rightarrow T = O\left(\frac{\Delta L_1d}{\varepsilon } \left(1 + \left(\frac{\sqrt{d}\|\Vec{\sigma}\|_\kappa}{\varepsilon}\right)^\frac{\kappa }{\kappa  -1}\right)\right).$$

\textbf{Case $\varepsilon \leq \frac{3L_0}{L_1}$, arbitrary tuning:} We set $1 - \beta = \frac{1}{\sqrt{T}}, \gamma = \gamma_0 T^{-\frac{3}{4}}$ and obtain
$$\frac{1}{T} \sum_{k=1}^T \EE\|\nabla f(x^k)\|_1 \leq \frac{2\Delta}{\gamma_0 T^\frac14} + \frac{2Ld\gamma_0}{T^\frac14} + \frac{8\sqrt{d} \|\Vec{\sigma}\|_\kappa}{T^\frac{\kappa - 1}{2\kappa}} + \frac{4\sqrt{d} \|\epsilon^0\|_2}{T^\frac12}.$$
Next, we choose $T$ to limit $\frac{2\Delta/\gamma_0 + 2Ld\gamma_0}{ T^\frac14} \leq \frac{\varepsilon}{2}$ and $\frac{8\sqrt{d} \|\Vec{\sigma}\|_\kappa}{T^\frac{\kappa - 1}{2\kappa}} \leq \frac{\varepsilon}{2}.$

\textbf{Case $\varepsilon \leq \frac{3L_0}{L_1}$, optimal tuning:} We choose stepsize $\gamma = \sqrt{\frac{\Delta (1 - \beta)}{T L_0 d}}$ to minimize the sum
\begin{eqnarray}
    \min_\gamma \left[\frac{2(f^* - f(x^1))}{\gamma T} + \frac{2L_0 d \gamma}{(1 - \beta)} \right] = 4\sqrt{\frac{\Delta L_0 d}{T(1-\beta)}}, \notag
\end{eqnarray}
\begin{eqnarray}
    \frac{1}{T} \sum_{k=1}^T\EE\|\nabla f(x^k)\|_1 &\leq&  \frac{4 \sqrt{d}  \EE \|\epsilon^1\|_2}{T(1 - \beta)} + 4\sqrt{\frac{\Delta L_0 d}{T(1-\beta)}} + 8 \sqrt{d}(1-\beta)^\frac{\kappa - 1}{\kappa}\|\Vec{\sigma}\|_\kappa.
\end{eqnarray}
The first term is much smaller than the second one, hence we omit it. Next, we choose $1  - \beta =   \min\left\{1, \left(\frac{\Delta L_0}{T \|\Vec{\sigma}\|_\kappa^2}\right)^\frac{\kappa}{3\kappa - 2}  \right\}$ to minimize the last two terms:
$$\min_{\beta \in [0,1)} \left[ 4\sqrt{\frac{\Delta L_0 d}{T(1-\beta)}} + 8 \sqrt{d}(1-\beta)^\frac{\kappa - 1}{\kappa}\|\Vec{\sigma}\|_\kappa \right] \leq 12 \sqrt{d} \left(\frac{\Delta L_0}{T}\right)^\frac{\kappa - 1}{3\kappa - 2} \|\Vec{\sigma}\|_\kappa^\frac{\kappa}{3\kappa  - 2} + 12 \sqrt{\frac{\Delta L_0 d}{T}}.$$
Finally, we choose number of iterations $T$ to satisfy:
\begin{eqnarray}
    12 \sqrt{d} \left(\frac{\Delta L_0}{T}\right)^\frac{\kappa - 1}{3\kappa - 2} \|\Vec{\sigma}\|_\kappa^\frac{\kappa}{3\kappa  - 2}  + 12 \sqrt{\frac{\Delta L_0 d}{T}}\leq \frac{\varepsilon}{2} \Rightarrow T = O\left(\frac{\Delta L_0d}{\varepsilon^2 } \left( 1 + \left(\frac{\sqrt{d}\|\Vec{\sigma}\|_\kappa}{\varepsilon}\right)^\frac{\kappa }{\kappa  -1}\right)\right).
\end{eqnarray}
We only need to check that
$$\gamma = \sqrt{\frac{\Delta (1 - \beta)}{T L_0 d}} = \sqrt{\frac{\Delta L_0 d}{T (1-\beta)}} \frac{(1-\beta)}{L_0 d}  \leq \frac{\varepsilon}{2 \cdot 12} \frac{(1-\beta)}{L_0 d} \overset{\varepsilon \leq \frac{3L_0}{L_1}}{\leq} \frac{(1-\beta)}{L_1 d}.$$
\end{proof}

\section{Restarted \algname{minibatch-SignSGD} and \algname{MajorityVote-SignSGD}} \label{subsec:restarted}

For PL functions (As. \ref{as: PL}), we can apply restart technique to \algname{minibatch-SignSGD} and \algname{MajorityVote-SignSGD}.  At each round, we run a base algorithm with certain parameters and then aggregate the output point. This output point is used as an initial point for the next round.  

\begin{algorithm}[ht!]
\caption{\algname{Restarted-$\mathcal{A}$} }
\label{alg:restarted}   
\begin{algorithmic}[1]
\REQUIRE Starting point $x^0 \in \R^d$, number of restarts $\tau$, base algorithm $\mathcal{A}$, parameters $\{\theta_n\}_{n=1}^\tau$.

\FOR{$n=1,\ldots, \tau$}

\STATE Run Algorithm $\mathcal{A}$ with parameters $\theta_n$ and initial point $x^{n-1}$;

\STATE Set $x^n$ as the aggregated output point from the previous round: the point with the minimal $\ell_2$ gradient norm;
\ENDFOR
\ENSURE $x^\tau$
\end{algorithmic}
\end{algorithm}

\begin{theorem}[\textbf{HP complexity for Restarted \algname{minibatch-SignSGD}}]\label{thm:restarted minibatch SignSGD}
Consider lower-bounded $(L_0,L_1)$-smooth, $\mu$-PL function $f$ (As. \ref{as: bounded}, \ref{as: smooth}, \ref{as: PL}) and HT gradient estimates (As. \ref{as: pBCM}). Then restarted \algname{minibatch-SignSGD}  requires the sample complexity $N$  to achieve $f(x^\tau) - f(x^*) \leq \varepsilon$ with probability at least $1-\delta$ for:

\textbf{Optimal tuning for $\varepsilon \geq  (\frac{8L_0}{L_1\sqrt{d}})^2$:} $\tau  = \log(\Delta/\varepsilon)$, iterations $ T_n = O\left(\frac {  L_1^\delta d^\frac32 \sqrt{\Delta}}{2^{n/2}\sqrt{\mu}} \right), $ constant batchsizes $B_n \equiv \max \left\{1,  \left(\frac{1024\|\Vec{\sigma}\|^2_1}{\mu\varepsilon}\right)^\frac{\kappa}{2(\kappa-1)}\right\}$, constant stepsizes $\gamma_n \equiv  \frac{1}{48L^\delta_1d\sqrt{d}}:$ 
\begin{equation}
   N = O\left( \frac {  L_1^\delta d^\frac32 \sqrt{\Delta}}{\sqrt{\mu}} \left[1 + \left(\frac{\|\Vec{\sigma}\|^2_1}{\mu \varepsilon}\right)^\frac{\kappa}{2(\kappa-1)}\right]\right)\notag 
\end{equation}

\textbf{Optimal tuning for $\varepsilon < (\frac{8L_0}{L_1\sqrt{d}})^2$:}  $\tau  = \log(\Delta/\varepsilon)$, iterations $ T_n = O\left(\frac {  L_0^\delta d}{\mu} \right), $ constant batchsizes $B_n \equiv \max \left\{1,  \left(\frac{1024\|\Vec{\sigma}\|^2_1}{\mu\varepsilon}\right)^\frac{\kappa}{2(\kappa-1)}\right\}$, constant stepsizes $\gamma_n \equiv  \sqrt{\frac{\Delta}{2^{n+4}T_nL_0^\delta d}}:$
\begin{equation}
    N = O\left( \frac {  L_0^\delta d \log\frac{\Delta}{\varepsilon}}{\mu} \left[1 + \left(\frac{\|\Vec{\sigma}\|^2_1}{\mu\varepsilon}\right)^\frac{\kappa}{2(\kappa-1)}\right]\right),  \notag
\end{equation}
where $\Delta = f(x^1) - f^*, L_0^\delta = L_0 \log(\frac{\log\frac{\Delta}{\varepsilon}}{\delta }), L_1^\delta = L_1 \log(\frac{\log\frac{\Delta}{\varepsilon}}{\delta }).$
\end{theorem}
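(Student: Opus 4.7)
The plan is to use the restart framework in Alg.~\ref{alg:restarted}: run \algname{minibatch-SignSGD} (Alg.~\ref{alg:minibatch-signSGD}) for $\tau = \lceil\log_2(\Delta/\varepsilon)\rceil$ rounds, and at each round $n$ halve the remaining function-value gap. Concretely, I will maintain the invariant $f(x^n) - f^\ast \leq \Delta_n := \Delta/2^n$. The bridge between the gradient bound produced by Thm.~\ref{thm:minibatch SignSGD} and this invariant is the PL condition combined with the norm inequality $\|\nabla f\|_2 \leq \|\nabla f\|_1$: if the $n$-th round outputs $x^n$ with $\|\nabla f(x^n)\|_1 \leq \varepsilon^{\text{grad}}_n$, then by Assumption~\ref{as: PL} we have $f(x^n) - f^\ast \leq \|\nabla f(x^n)\|_2^2/(2\mu) \leq (\varepsilon^{\text{grad}}_n)^2/(2\mu)$. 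To get $\Delta_n = \Delta_{n-1}/2$ it therefore suffices to pick the target gradient accuracy $\varepsilon^{\text{grad}}_n := \sqrt{\mu\Delta_n}$.

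Next I plug this target into Thm.~\ref{thm:minibatch SignSGD} applied with initial gap $\Delta_{n-1} = 2\Delta_n$ and failure probability $\delta/\tau$ (so that a union bound over the $\tau$ rounds gives total failure probability $\delta$, which is exactly what produces the $\log(\log(\Delta/\varepsilon)/\delta)$ factor in $L_0^\delta, L_1^\delta$). In the fast regime $\varepsilon^{\text{grad}}_n \geq 8L_0/(L_1\sqrt d)$, equivalently $\varepsilon \geq (8L_0/(L_1\sqrt d))^2/\mu$ (which the theorem absorbs into its case condition), the round requires $T_n = O(\Delta_{n-1} L_1^\delta d^{3/2}/\varepsilon^{\text{grad}}_n) = O(L_1^\delta d^{3/2}\sqrt{\Delta_n/\mu})$ iterations with stepsize $\gamma_n \equiv 1/(48L_1^\delta d^{3/2})$. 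In the slow regime we obtain $T_n = O(\Delta_{n-1} L_0^\delta d/(\varepsilon^{\text{grad}}_n)^2) = O(L_0^\delta d/\mu)$ with stepsize $\gamma_n \equiv \sqrt{\Delta_{n-1}/(20 L_0^\delta d T_n)}$, matching the statement. Since the final target $\varepsilon^{\text{grad}}_\tau = \sqrt{\mu\varepsilon}$ is the smallest, I use a single batchsize $B_n \equiv (16\|\vec\sigma\|_1/\varepsilon^{\text{grad}}_\tau)^{\kappa/(\kappa-1)} = (1024\|\vec\sigma\|_1^2/(\mu\varepsilon))^{\kappa/(2(\kappa-1))}$ for every round, which upper-bounds the per-round requirement.

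Summing the per-round sample cost $N_n = T_n B_n$ is then routine. In the fast regime, $T_n \propto \sqrt{\Delta_n} = \sqrt{\Delta/2^n}$ forms a geometric series, so $\sum_{n=1}^\tau T_n = O(L_1^\delta d^{3/2}\sqrt{\Delta/\mu})$, giving the bound
\begin{equation*}
N = O\!\left(\frac{L_1^\delta d^{3/2}\sqrt{\Delta}}{\sqrt{\mu}}\Bigl[1 + \bigl(\tfrac{\|\vec\sigma\|_1^2}{\mu\varepsilon}\bigr)^{\kappa/(2(\kappa-1))}\Bigr]\right).
\end{equation*}
In the slow regime $T_n$ is independent of $n$, so $\sum_{n=1}^\tau T_n = O(\tau L_0^\delta d/\mu) = O(L_0^\delta d \log(\Delta/\varepsilon)/\mu)$, yielding the advertised complexity. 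The aggregation in Alg.~\ref{alg:restarted} (picking the iterate with minimum $\|\nabla f\|_2$) only improves on the average-norm bound provided by Thm.~\ref{thm:minibatch SignSGD}, so the conversion to PL accuracy via $\|\nabla f(x^n)\|_2^2/(2\mu)$ is valid.

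The main obstacle is bookkeeping rather than new analysis: one has to verify that the stepsize cap $\gamma_n \leq 1/(48L_1^\delta d^{3/2})$ from Lem.~\ref{lem: signsgd T update} is respected in the slow regime (this uses the regime condition $\varepsilon \leq (8L_0/(L_1\sqrt d))^2$ in exactly the same way as in the proof of Thm.~\ref{thm: minibatch signsgd finite}), and that the common batchsize chosen for the final-round accuracy dominates the looser batchsize requirements of earlier rounds (which it does, monotonically in $n$). Once these two checks are in place, the union bound over the $\tau$ rounds plus the geometric / logarithmic summation above deliver both claimed bounds.
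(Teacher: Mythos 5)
Your proposal is correct and follows essentially the same route as the paper's proof: restart with $\tau=\log(\Delta/\varepsilon)$ rounds, halve the function gap each round by converting the per-round gradient bound to a function-value bound via the PL inequality $f-f^*\leq \|\nabla f\|_2^2/(2\mu)$ applied to the minimum-norm iterate, take a union bound with per-round failure probability $\delta/\log(\Delta/\varepsilon)$, fix one batchsize from the final target accuracy, and sum the geometric (fast regime) or constant (slow regime) per-round iteration counts. Your observation that the clean case boundary is really $\mu\varepsilon \gtrsim (8L_0/(L_1\sqrt d))^2$ rather than $\varepsilon \gtrsim (8L_0/(L_1\sqrt d))^2$ is a fair point of bookkeeping, but it does not change the argument or the stated complexities.
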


\begin{theorem}[\textbf{HP complexity for Restarted \algname{MajorityVote-SignSGD}}]\label{thm:restarted majority SignSGD}
Consider lower-bounded $(L_0,L_1)$-smooth, $\mu$-PL function $f$ (As. \ref{as: bounded}, \ref{as: smooth}, \ref{as: PL}) and HT gradient estimates corrupted by \textbf{unimodal and
symmetric HT noise with $\kappa > 0$} 
 (As. \ref{as: pBCM}). Then restarted \algname{MajorityVote-SignSGD}  requires the sample complexity $N$  to achieve $f(x^\tau) - f(x^*) \leq \varepsilon$ with probability at least $1-\delta$ for:

\textbf{Optimal tuning for $\varepsilon \geq  (\frac{8L_0}{L_1\sqrt{d}})^2$:} $\tau  = \log(\Delta/\varepsilon)$, iterations $ T_n = O\left(\frac {  L_1^\delta d^\frac32 \sqrt{\Delta}}{2^{n/2}\sqrt{\mu}} \right), $ constant batchsizes $M_n \equiv \max \left\{\frac{160}{\kappa^2},  \frac{1024\|\Vec{\sigma}\|^2_1}{\mu\varepsilon}\right\}$, constant stepsizes $\gamma_n \equiv  \frac{1}{48L^\delta_1d\sqrt{d}}:$ 
\begin{equation}
   N = O\left( \frac {  L_1^\delta d^\frac32 \sqrt{\Delta}}{\sqrt{\mu}} \left[\frac{1}{\kappa^2} + \frac{\|\Vec{\sigma}\|^2_1}{\mu \varepsilon}\right]\right)\notag
\end{equation}

\textbf{Optimal tuning for $\varepsilon < (\frac{8L_0}{L_1\sqrt{d}})^2$:}  $\tau  = \log(\Delta/\varepsilon)$, iterations $ T_n = O\left(\frac {  L_0^\delta d}{\mu} \right), $ constant batchsizes $M_n \equiv \max \left\{\frac{160}{\kappa^2},  \frac{1024\|\Vec{\sigma}\|^2_1}{\mu\varepsilon}\right\}$, constant stepsizes $\gamma_n \equiv  \sqrt{\frac{\Delta}{2^{n+4}T_nL_0^\delta d}}:$
\begin{equation}
    N = O\left( \frac {  L_0^\delta d \log\frac{\Delta}{\varepsilon}}{\mu} \left[\frac{1}{\kappa^2} + \frac{\|\Vec{\sigma}\|^2_1}{\mu\varepsilon}\right]\right),  \notag
\end{equation}
where $\Delta = f(x^1) - f^*, L_0^\delta = L_0 \log(\frac{\log\frac{\Delta}{\varepsilon}}{\delta }), L_1^\delta = L_1 \log(\frac{\log\frac{\Delta}{\varepsilon}}{\delta }).$
\end{theorem}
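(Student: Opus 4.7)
The proof plan follows the standard restart template: at each round, run the base \algname{MajorityVote-SignSGD} long enough to halve the current function-value suboptimality with high probability, and then iterate this doubling trick for $\tau = \lceil \log_2(\Delta/\varepsilon) \rceil$ rounds. Let $\Delta_n$ denote a high-probability upper bound on $f(x^n) - f^*$, with $\Delta_0 = \Delta$ and target $\Delta_n \leq \Delta_{n-1}/2$. By the $\mu$-PL condition (As. \ref{as: PL}) combined with $\|\cdot\|_2 \leq \|\cdot\|_1$, whenever $\min_{k \in \overline{1,T_n}} \|\nabla f(x^k)\|_1 \leq \varepsilon_n := \sqrt{\mu \Delta_{n-1}}$ holds after round $n$, the aggregated point (chosen as the one with minimal $\ell_2$-gradient norm, Alg. \ref{alg:restarted}) satisfies $f(x^n) - f^* \leq \varepsilon_n^2/(2\mu) = \Delta_{n-1}/2$, exactly the desired contraction.

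The second step is to calibrate $T_n$, $\gamma_n$, $M_n$ at each round. Since the average-norm bound dominates the min-norm bound, it suffices to apply the finite-horizon optimal-tuning part of Theorem \ref{thm: signsgd majority finite} with target accuracy $\varepsilon_n$ and initial suboptimality $\Delta_{n-1}$, replacing $\delta$ by $\delta/\tau$ (a union bound over restarts yields the global $1-\delta$ guarantee and explains the $\log(\log(\Delta/\varepsilon)/\delta)$ factor inside $L_0^\delta, L_1^\delta$). Observe that the batch size $M_n$ is determined by the condition $2^{16}\|\Vec{\sigma}\|_1^2/\varepsilon_n^2 \leq M_n$; since $\varepsilon_n$ is smallest at the final round ($\varepsilon_\tau \asymp \sqrt{\mu\varepsilon}$), taking the constant choice $M_n \equiv \max\{160/\kappa^2,\, 1024\|\Vec{\sigma}\|_1^2/(\mu\varepsilon)\}$ is sufficient for every round. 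The stepsize/iteration budget then separates into two regimes according to whether $\varepsilon_n$ lies above or below the threshold $8L_0/(L_1\sqrt{d})$.

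In the first regime $\varepsilon \geq (8L_0/(L_1\sqrt{d}))^2$, all targets $\varepsilon_n$ exceed the threshold, so Theorem \ref{thm: signsgd majority finite} gives $T_n = O\bigl(\Delta_{n-1} L_1^\delta d^{3/2}/\varepsilon_n\bigr) = O\bigl(L_1^\delta d^{3/2}\sqrt{\Delta_{n-1}}/\sqrt{\mu}\bigr) = O\bigl(L_1^\delta d^{3/2}\sqrt{\Delta}/(\sqrt{\mu}\,2^{(n-1)/2})\bigr)$ with constant stepsize $\gamma_n = 1/(48 L_1^\delta d^{3/2})$. Summing the geometric series yields $\sum_n T_n = O(L_1^\delta d^{3/2}\sqrt{\Delta}/\sqrt{\mu})$, which when multiplied by $M_n$ gives the claimed bound. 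In the second regime, all $\varepsilon_n$ fall below the threshold, so $T_n = O(\Delta_{n-1} L_0^\delta d/\varepsilon_n^2) = O(L_0^\delta d/\mu)$ is constant across rounds with $\gamma_n = \sqrt{\Delta_{n-1}/(2 T_n L_0^\delta d)} \asymp \sqrt{\Delta/(2^{n+4} T_n L_0^\delta d)}$; the total number of iterations is $\tau \cdot T_n = O(L_0^\delta d \log(\Delta/\varepsilon)/\mu)$, and multiplying by $M_n$ delivers the second bound.

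The main technical obstacle will be verifying that the stepsize constraint $\gamma_n \leq 1/(48 L_1^\delta d^{3/2})$ from Lemma \ref{lem: majority signsgd T update} is respected in the second regime, where $\gamma_n$ is determined by the square-root schedule: this requires the same calculation as at the end of the proof of Theorem \ref{thm: signsgd majority finite}, using $\varepsilon_n \leq 8L_0/(L_1\sqrt{d})$ to bound $\gamma_n$ by $1/(48 L_1^\delta d^{3/2})$. The remaining bookkeeping (union bound over $\tau$ restarts, verifying that the geometric series sums cleanly, and checking that the min-gradient selection over a single run inherits the HP bound from Lemma \ref{lem: majority signsgd T update}) is routine.
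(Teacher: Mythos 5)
Your proposal is correct and follows essentially the same route as the paper: a doubling/restart scheme with $\tau=\log(\Delta/\varepsilon)$ rounds, a per-round union bound ($\delta\mapsto\delta/\tau$, which produces the $\log(\log(\Delta/\varepsilon)/\delta)$ factors), the PL condition applied to the minimum-$\ell_2$-gradient iterate to halve the suboptimality, a constant batch size $M_n$ calibrated to the final (smallest) target accuracy with the $160/\kappa^2$ floor, and the two-regime choice of $T_n,\gamma_n$ summed geometrically (regime one) or over $\tau$ identical rounds (regime two). The only cosmetic difference is that you invoke the finite-horizon Theorem~\ref{thm: signsgd majority finite} per round with target $\varepsilon_n=\sqrt{\mu\Delta_{n-1}}$, whereas the paper re-derives the per-round bound by squaring the three-term convergence bound from Lemma~\ref{lem: majority signsgd T update} and dividing by $\mu$ — these are equivalent.
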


\begin{proof}
Here we prove only Theorem \ref{thm:restarted minibatch SignSGD}. The proof of Theorem \ref{thm:restarted majority SignSGD} is similar. 

Consider one round of restarted algorithm with the initial condition $\Delta$ which will be transformed into $\Delta_2 \leq \Delta/2$. In total, we will have $\log(\Delta/\varepsilon)$ rounds. 
Instead of the initial failure probability $\delta$ we use decreased probability  $\delta /\log \frac{\Delta}{\varepsilon}$, since the probability of holding bounds  $\log(\Delta/\varepsilon)$ times for all restarts equals to $(1 - \frac{\delta}{\log(\Delta/\varepsilon)})^{\log(\Delta/\varepsilon)} \geq (1 - \frac{\delta}{\log(\Delta/\varepsilon)} \log(\Delta/\varepsilon)) = (1 - \delta).$

Plugging in constant stepsizes $\gamma_k \equiv \gamma \leq \frac{1}{48L^\delta_1d\sqrt{d}}$ in \eqref{eq: proof signsgd conv main eq} implies $C_T = T\gamma^2, \gamma^{max} = \gamma$:
$$\frac1T \sum\limits_{k=1}^{T} \|\nabla f (x^k)\|_1 \leq \frac{16\Delta}{T\gamma} + 256 L^\delta_0 d\gamma   + 32 \|\Vec{\sigma}_k\|_1.$$
Due to Batching Lemma \ref{lem: batching p}, we can estimate the $\kappa-$th moment of the batched estimate for constant batchsizes $B_k  \equiv B$ as
   $\|\Vec{\sigma}_k\|_1 \leq \frac{2\|\Vec{\sigma}\|_1}{B^{\frac{\kappa-1}{\kappa}}}$  and  derive:
\begin{eqnarray} \notag
      \min_{k\in \overline{1,T}} \|\nabla f (x^k)\|_2 \leq \frac1T \sum\limits_{k=1}^{T} \|\nabla f (x^k)\|_2 \leq \frac1T \sum\limits_{k=1}^{T} \|\nabla f (x^k)\|_1 \leq \frac{16\Delta}{T\gamma} + 256 L^\delta_0 d\gamma + 32 \frac{\|\Vec{\sigma}_k\|_1}{B^{\frac{\kappa-1}{\kappa}}}. \notag 
\end{eqnarray}
Next, we square the inequality and apply PL condition:
\begin{eqnarray}
    \|\nabla f (x^T_{min})\|^2_2 &\leq &8\left(\frac{16\Delta}{T\gamma}\right)^2 + 8(256 L^\delta_0 d\gamma)^2  + 8\left(32 \frac{\|\Vec{\sigma}_k\|_1}{B^{\frac{\kappa-1}{\kappa}}}\right)^2, \notag \\
    f(x^T_{min}) - f(x^*) &\leq&  \|\nabla f (x^T_{min})\|^2_2 \leq  8\left(\frac{16\Delta}{T\gamma}\right)^2 + 8(256 L^\delta_0 d\gamma)^2  + 8\left(32 \frac{\|\Vec{\sigma}_k\|_1}{B^{\frac{\kappa-1}{\kappa}}}\right)^2, \notag \\
   \Delta_2 = f(x^T_{min}) - f(x^*) &\leq&  \frac{4}{\mu} \left[\left(\frac{16\Delta}{T\gamma}\right)^2 + (256 L^\delta_0 d\gamma)^2  + \left(32 \frac{\|\Vec{\sigma}_k\|_1}{B^{\frac{\kappa-1}{\kappa}}}\right)^2\right], \notag
\end{eqnarray}
where $x^T_{min}  = \arg\min \limits_{k \in \overline{1,T}} f(x^k)$.

\textbf{Case $\varepsilon \geq  (\frac{8L_0}{L_1\sqrt{d}})^2$, optimal tuning:} We use stepsizes $\gamma = \frac{1}{48L^\delta_1d \sqrt{d}} \Rightarrow (256 L^\delta_0 d\gamma)^2 \leq \varepsilon/2$ and batchsizes $32\frac{\|\Vec{\sigma}\|_1}{B^{\frac{\kappa-1}{\kappa}}} \leq \sqrt{\mu\varepsilon/8} \Rightarrow B_k \equiv \max \left\{1,  \left(\frac{1024\|\Vec{\sigma}\|^2_1}{\mu\varepsilon}\right)^\frac{\kappa}{2(\kappa-1)}\right\}$. The number of iterations  $T$ is chosen to decrease the term $\Delta$ by half: 
$$\Delta_2   \leq \Delta^2 \left(\frac{1028*48^2  (L_1^\delta)^2 d^3} {\mu T^2}\right) \leq \frac{\Delta}{2} \Rightarrow T = O\left(\frac {  L_1^\delta d^\frac32 \sqrt{\Delta}}{\sqrt{\mu}} \right).$$
At each restart, the initial condition $\Delta_n$ becomes $\Delta_{n+1} \leq \frac{\Delta_n}{2} \leq \frac{\Delta}{2^{n-1}}$, and the total number of iterations is
\begin{eqnarray}
T_{total} = \sum_{n=1}^{\log(\Delta/\varepsilon)} O\left(\frac {  L_1^\delta d^\frac32 \sqrt{\Delta}}{\sqrt{\mu} 2^\frac{n-1}{2}}  \right) = O\left(\frac {  L_1^\delta d^\frac32 \sqrt{\Delta}}{\sqrt{\mu}}  \right),
\end{eqnarray}
with the total number of oracle calls:
\begin{eqnarray}
    N = T_{total} * B_k = O\left( \frac {  L_1^\delta d^\frac32 \sqrt{\Delta}}{\sqrt{\mu}} \left[1 + \left(\frac{\|\Vec{\sigma}\|^2_1}{\mu \varepsilon}\right)^\frac{\kappa}{2(\kappa-1)}\right]\right).
\end{eqnarray}

\textbf{Case $\varepsilon \leq  (\frac{8L_0}{L_1\sqrt{d}})^2$, optimal tuning:} We use batchsizes $32\frac{\|\Vec{\sigma}\|_1}{B^{\frac{\kappa-1}{\kappa}}} \leq \sqrt{\mu\varepsilon/8} \Rightarrow B_k \equiv \max \left\{1,  \left(\frac{1024\|\Vec{\sigma}\|^2_1}{\mu\varepsilon}\right)^\frac{\kappa}{2(\kappa-1)}\right\}$ and stepsizes $\gamma = \sqrt{\frac{\Delta}{16TL_0^\delta d}}$ to have
\begin{eqnarray}
    \Delta_2  &\leq& \frac{8}{\mu} \left[  \frac{1024L_0^\delta d}{T} \Delta+  \varepsilon\right]. \notag
\end{eqnarray}
The number of iterations  $T$ is chosen to decrease the term $\Delta$ by half: 
$$\Delta_2   \leq \frac{8}{\mu}  \frac{1024L_0^\delta d}{T} \Delta \leq \frac{\Delta}{2} \Rightarrow T = O\left(\frac {  L_0^\delta d }{\mu} \right).$$
At each restart, the initial condition $\Delta_n$ becomes $\Delta_{n+1} \leq \frac{\Delta_n}{2} \leq \frac{\Delta}{2^{n-1}}$, and the total number of iterations is
\begin{eqnarray}
T_{total} = \sum_{n=1}^{\log(\Delta/\varepsilon)} O\left(\frac {  L_0^\delta d }{\mu} \right) = \left(\frac {  L_0^\delta d \log\frac{\Delta}{\varepsilon}}{\mu} \right), \notag
\end{eqnarray}
with the total number of oracle calls:
\begin{eqnarray}
    N = T_{total} * B_k = O\left( \frac {  L_0^\delta d \log\frac{\Delta}{\varepsilon}}{\mu} \left[1 + \left(\frac{\|\Vec{\sigma}\|^2_1}{\mu\varepsilon}\right)^\frac{\kappa}{2(\kappa-1)}\right]\right).
\end{eqnarray}
\end{proof}

\section{Experimental validation of the theoretical convergence bounds}\label{sec: exps for theory}
In this section, we run experiments to verify the following convergence bound from Lemma \ref{lem: signsgd T update} for the backbone \algname{SignSGD} method: 
$$\sum\limits_{k=1}^T \frac{\gamma_k}{16}\|\nabla f (x^k)\|_1 \leq \Delta + L_0d\sum_{k=1}^T\gamma_k^2 + 2\sum_{k=1}^T\gamma_k \|\Vec{\sigma}_k\|_1 
       + 6d(\gamma_1 \|\nabla f (x^1)\|_1  + 2C_TL_0) \log\frac{1}{\delta}, $$
where $C_T := \max\limits_{k \in \overline{1,T}} \gamma_k \cdot  \sum\limits_{\tau=1}^{k-1}\gamma_\tau, \gamma_k \leq 1/ (48L_1d^\frac32\log\frac1\delta)$ and $\Delta = f(x^1) - f^*$. In case of constant stepsizes $\gamma_k \equiv \gamma$, the bound transforms into
\begin{equation}\label{eq: exps bound}
    \frac1T \sum\limits_{k=1}^{T} \|\nabla f (x^k)\|_1 \leq \frac{4\Delta}{T\gamma} + 80 L_0d\gamma \log(\nicefrac{1}{\delta})  + 8{\|\Vec{\sigma}\|_1} + 24\frac{d\|\nabla f (x^1)\|_1}{T}  \log(\nicefrac{1}{\delta}), \quad \gamma \leq \frac{1}{ (48L_1d^\frac32\log\frac1\delta)}.
\end{equation}

\paragraph{Objective function and noise.} We optimize a non-convex neural network for classification task with features $X$ and one-dimensional labels $y$. The network $NN_\theta$ with parameter vector $\theta$ consists of two fully connected layers, ReLU activation, batch normalization and dropout. The objective function is the following logistic regression with $L_0$ and $L_1$ regularizations with coefficients $\lambda_{L_0}$ and $\lambda_{L_1}$, respectively:
$$f(\theta) = \log(1 + \exp(-\la y, NN_\theta(X)\ra)) + \frac{\lambda_{L_0}}{2} \cdot \|\theta\|_2^2 + \exp(\lambda_{L_1} \cdot \la \Vec{1}, \theta\ra).$$
The regularization coefficients $\lambda_{L_0}, \lambda_{L_1}$ are the smoothness constants of the corresponding regularization terms (see Appendix~\ref{sunbsec: gen smoothness}). If their value are changed by some amount then the actual $L_0, L_1$ smoothness constants of the objective function $f$ are changed by the exactly same amount.

To model the noise, we compute the whole gradient $\nabla f(\theta)$ and artificially add noise vector with independent components sampled from $\alpha$-stable Levy distribution with scale $\sigma$ ($\alpha$ is the $\kappa$ parameter). 

As training data, we consider the dataset \texttt{mushrooms} from LibSVM \cite{chang2011libsvm}. The matrix $X$ has shape $(6499, 112)$, hence, we set the NN layers sizes $(112, 32,1)$ and dropout rate $0.1$.

\paragraph{Noise dependencies.} First, we verify the linear dependency of the achieved accuracy \eqref{eq: exps bound} on noise $\sigma$. We set small regularization coefficients $\lambda_{L_0}  = 0.01, \lambda_{L_1} = 0.001$ and constant stepsize $\gamma = 3\cdot 10^{-4}$ for all experiments in this paragraph. 

Next, we vary $\sigma \in [0.1, 0.01, 0.001]$ and $\kappa \in [2, 1.5, 1]$. The results over $10$ runs with standard deviation bars are depicted in the left graph of Figure \ref{fig:sigma-kappa vary}. 
\begin{figure}
    \centering
    \includegraphics[width=0.3\linewidth]{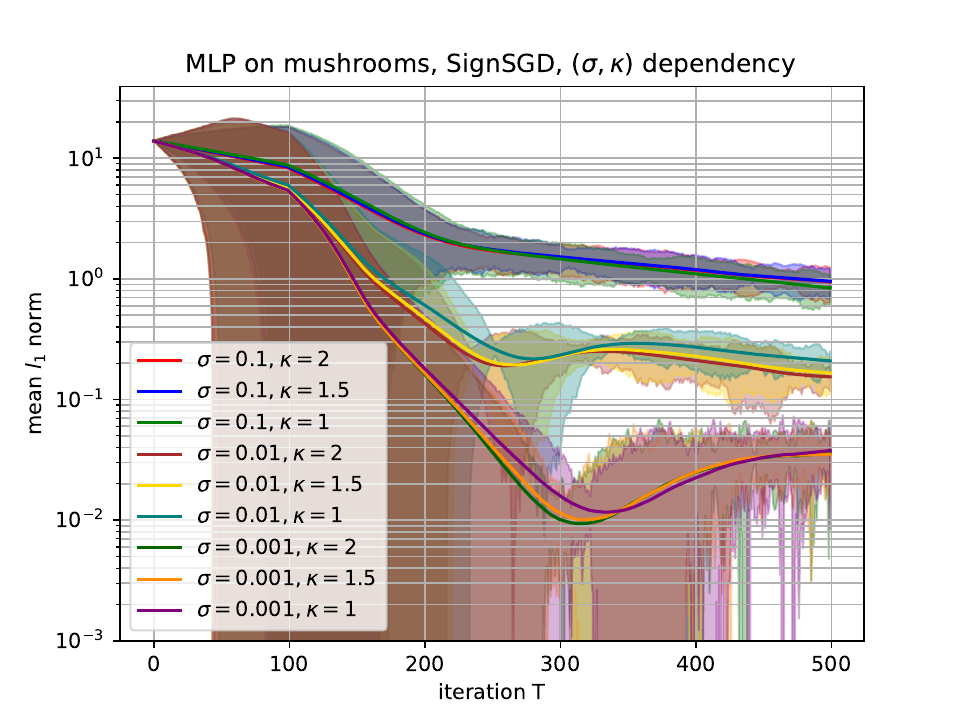}
    \includegraphics[width=0.3\linewidth]{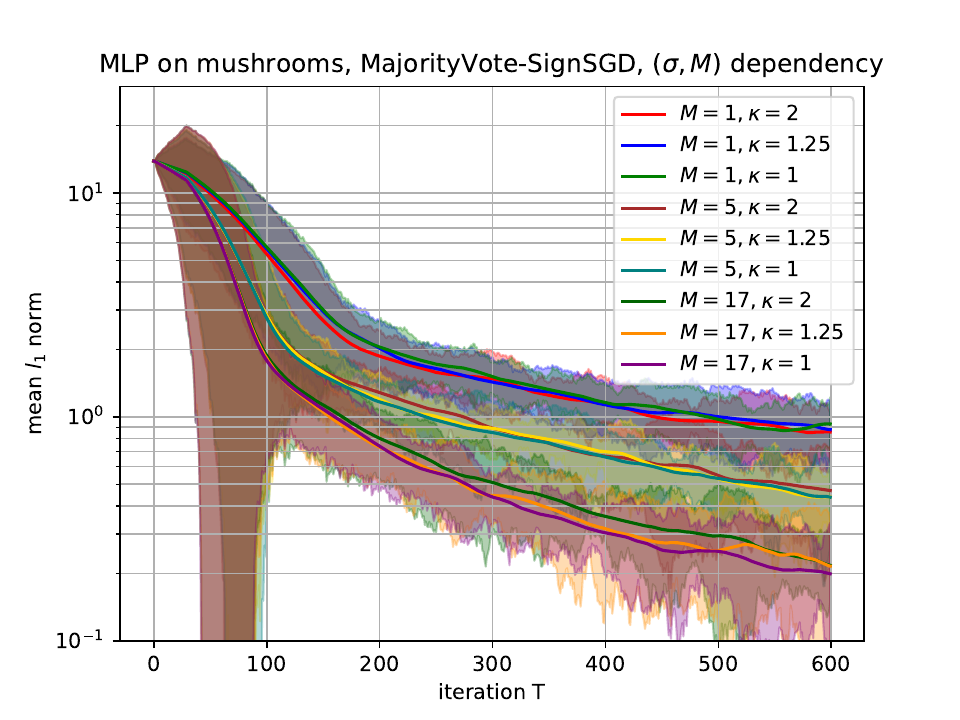}
    \includegraphics[width=0.3\linewidth]{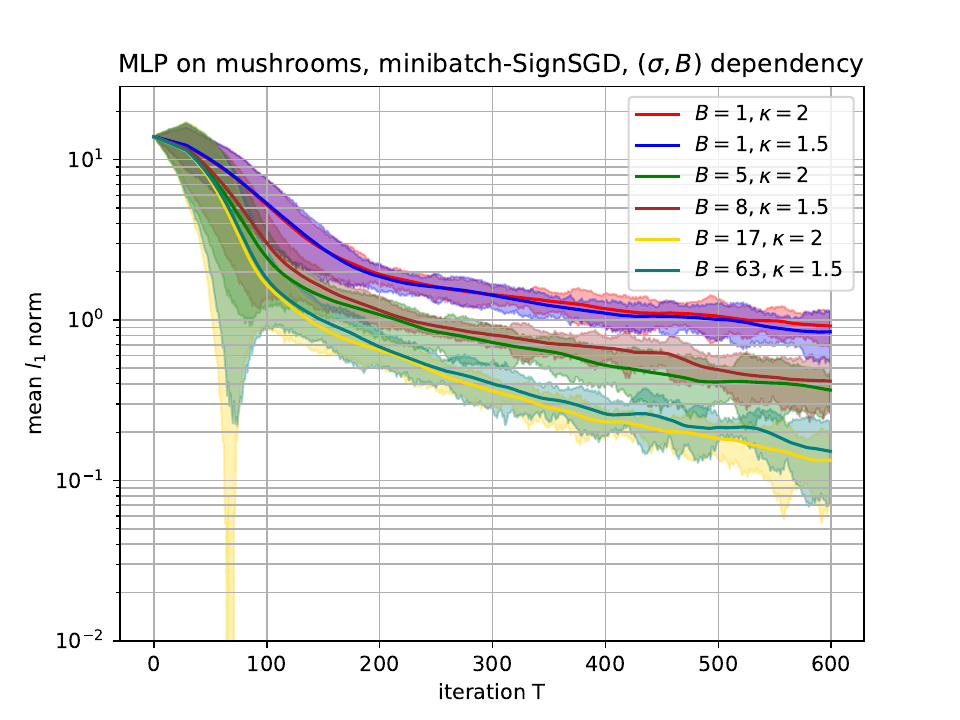}
    \caption{Experimental noise dependencies for $(L_0, L_1)$-smoooth problems.}
    \label{fig:sigma-kappa vary}
\end{figure}

In practice, the achieved accuracy does not depend on $\kappa$, only linearly on $\sigma$. We also wish to highlight the small size of error bars especially in the end of training which corresponds to mild $\log \frac{1}{\delta}$ dependency.     

In addition, we check how well batching (Alg. \ref{alg:minibatch-signSGD}) and majority voting (Alg. \ref{alg:majorityvotesignSGDsingle}) reduce the noise, .i.e, whether reduction laws $\sigma/B^\frac{\kappa - 1}{\kappa }$ and $\sigma/\sqrt{M}$ from Theorems \ref{thm:minibatch SignSGD} and \ref{thm:com-sign conv} hold true.  To reduce the noise by $2$ and $4$ times for majority voting, we use batchsizes $M = 1, 4, 16$ for all $\kappa \in [2, 1.25, 1]$. The results are shown in the middle graph of Figure \ref{fig:sigma-kappa vary}. To reduce the noise by $2$ and $4$ times for batching, we use batchsizes $B = 1, 4, 16$ for $\kappa  =2$ and $B = 1, 8, 64$ for $\kappa = 1.5$. The results are shown in the right graph of Figure \ref{fig:sigma-kappa vary}. In practice, both methods actually reduce the noise according to the theoretical laws. 

%%\paragraph{$L_0$ and $L_1$ dependencies.} 

%%To verify the linear dependency on $L_0$ and zero effect of $L_1$ (while condition $\gamma \leq 1/(48L_1d^\frac32\log\frac1\delta)$ is satisfied) in \eqref{eq: exps bound}, we fix the noise $\kappa = 1.5, \sigma = 0.1$ and stepsize $\gamma = 10^{-3}.$ Next, we vary the coefficient $\lambda_{L_1} \in [0.01, 0.001, 0.0001, 0.0]$ with fixed $\lambda_{L_0} = 0.1$ and vary  $\lambda_{L_0} \in [0.1, 0.01, 0.001, 0.0]$ with fixed $\lambda_{L_1} = 0.01$. The results over $3$ runs are presented in 

\paragraph{Two phase convergence.} Here, we demonstrate the convergence speed slowdown after reaching the accuracy $\frac{8L_0}{L_1\sqrt{d}}$ as it stated in Theorem \ref{thm:minibatch SignSGD}. We also test the arbitrary tuning strategy proposed for reaching this behavior.

We slightly change the setup to better control constants $L_0, L_1$. We replace fully-connected neural network $NN_\theta(X)$ with simple linear transform $NN_\theta(X) = X\cdot \theta$, hence, the current objective function with only $L_0$ regularization is:
$$f(\theta) = \log(1 + \exp(-\la y,  X\cdot \theta \ra)) + \frac{\lambda_{L_0}}{2} \cdot \|\theta\|_2^2.$$

In this case, we can directly compute (see Example \ref{exp: log func L1} for \textit{mushrooms} dataset) and control constants $L_0 = \lambda_{L_0}, L_1 \approx 5.58$. The noise  parameters are $\kappa = 1.5$ and $\sigma = 0.1$. 

First, we set constant stepsize $\gamma = 10^{-1}$ and vary $\lambda_{L_0} \in [0, 10^{-1}, 10^{-2}, 10^{-3}, 10^{-4}, 10^{-5}]$. The results over $10$ runs are depicted in the left graph of Figure \ref{fig: phase trans}. One can see that the final accuracy drops linearly with $\lambda_{L_0}$ until it reaches the noise level. Before the plateau, we observe the fast $L_1$ convergence.  

In the next experiment, we follow the arbitrary tuning strategy and start to decrease stepsizes as $1/\sqrt{k}$ after the plateau. The results are presented in the middle graph of Figure \ref{fig: phase trans}. One can see that now method can slowly reach the same noise level after the first plateau. The speed transition accuracy also drops linearly with $\lambda_{L_0}$.    

Finally, we show that, for functions with $L_0 = 0$, our method with constant stepsize convergences to noise level $\sigma$ despite the value of the constant $L_1$. We set $\lambda_{L_0} = 0$ and vary the noise level $\sigma \in [10^{-1}, 10^{-2}, 10^{-3},10^{-4}, 10^{-5}, 10^{-6}]$. The results are shown in the right graph of Figure~\ref{fig: phase trans}. These results clearly support the theory with alone linear dependency on $\sigma$. 
\begin{figure}
    \centering
    \includegraphics[width=0.3\linewidth]{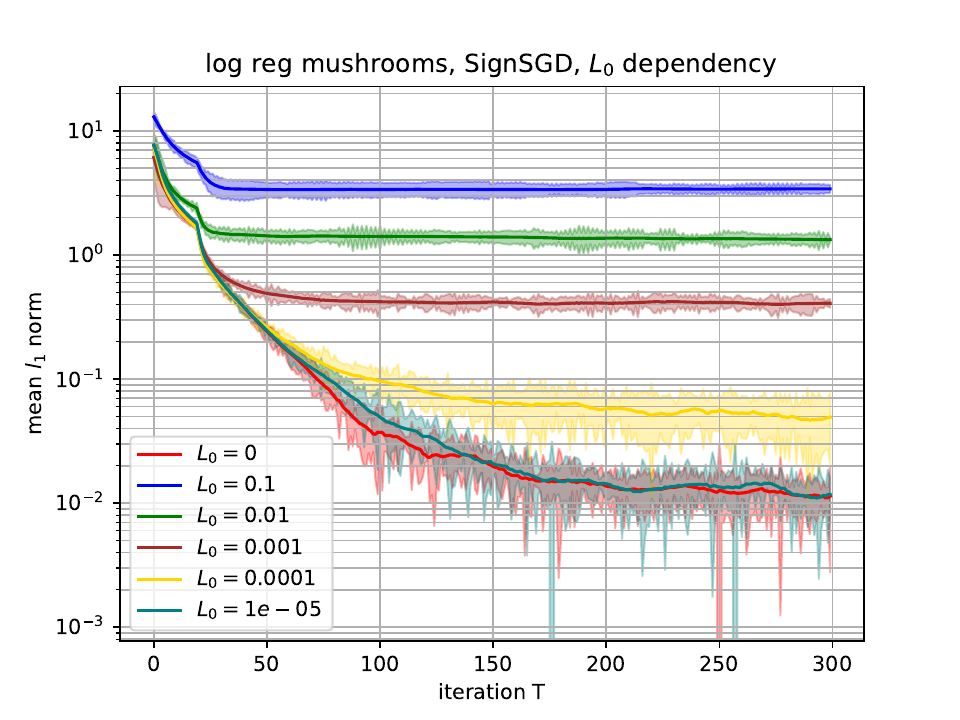}
    \includegraphics[width=0.3\linewidth]{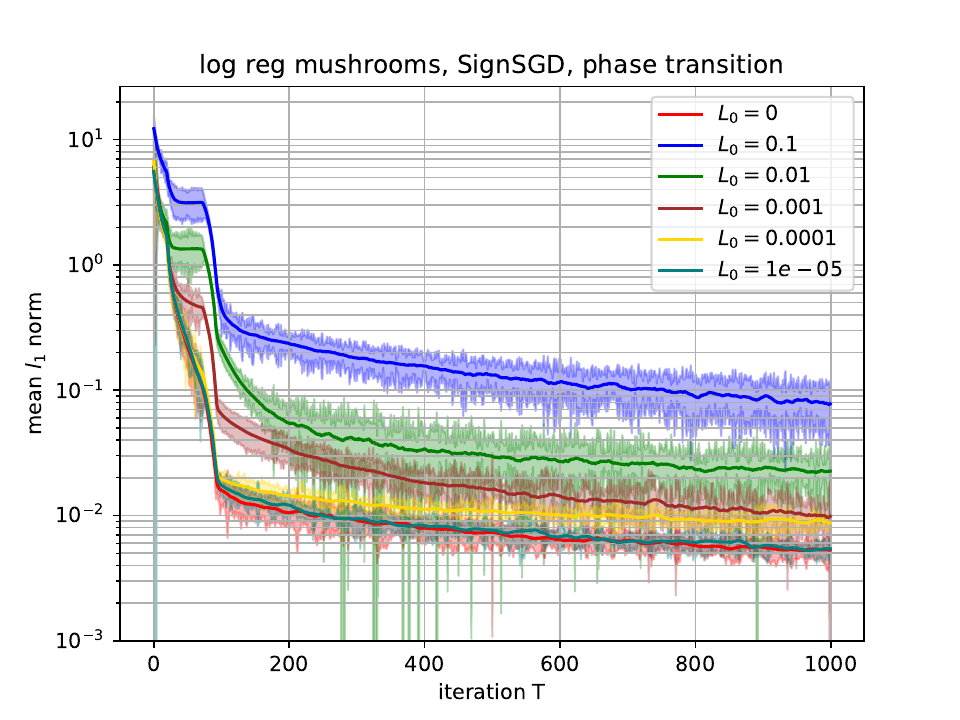}
    \includegraphics[width=0.3\linewidth]{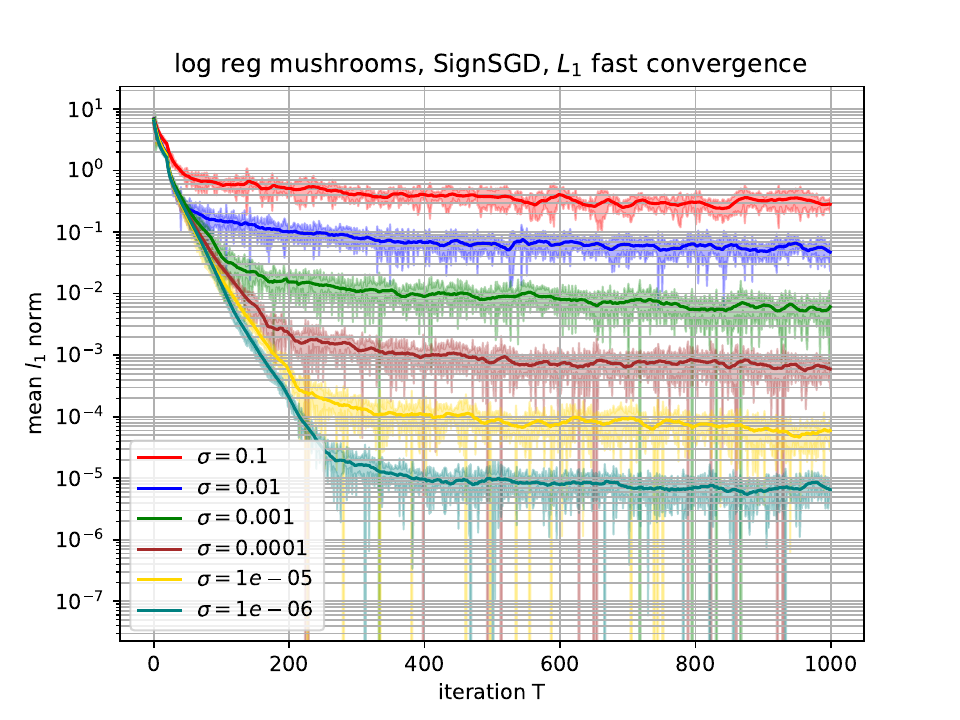}
    \caption{Experimental convergence speed transition for $(L_0, L_1)$-smooth problems.}
    \label{fig: phase trans}
\end{figure}

\section{Additional experiments}\label{sec: moe exps}

\subsection{Mixture of Experts pre-training experiments}

We complement our experiments with another setup -- different architecture and data. In~\Cref{sec:experiments}, we used a dense LLaMA model; now, we have switched to a Mixture of Experts (MoE) architecture based on the same LLaMA model, retaining RoPE and identical activation functions. Our MoE model follows the Switch Transformer~\citep{fedus2022switch} MoE variant with classical top $k=2$ gating and 8 experts, giving us approximately 520M parameters if we have the same configuration as 130M LLaMA. We conduct these experiments on the FineWeb dataset~\citep{penedo2406fineweb} a popular corpus for LLM pre-training.

We run \algname{AdamW}, \algname{M-SignSGD}, \algname{M-NSGD} and \algname{M-ClippedSignSGD} optimizers following the best practices from our earlier setup on dense models.
We train with a batch size of 256 and sequence length 512 for 42k (5.5B tokens) and 336k steps (44B tokens). 
That is for the second training horizon we go far beyond the Chinchilla optimal tokens-per-parameters ration.
The results are presented in~\Cref{tab:moe1,tab:moe2} respectively.

\begin{table}[htbp]
\centering
\caption{Perplexity of LLaMa-base MoE 520M model pre-trained on FineWeb for 42k steps. Lower is better.}
\label{tab:moe1}
\begin{tabular}{lc}
\toprule
\textbf{Optimizer} & \textbf{Perplexity $\downarrow$} \\
\midrule
\algname{AdamW} & \textit{22.85} \\
\midrule
\algname{M-SignSGD} & \textbf{23.19} \\
\algname{M-NSGD} & 23.32 \\
\algname{M-ClippedSignSGD} & 23.30 \\
\bottomrule
\end{tabular}
\end{table}

\begin{table}[htbp]
\centering
\caption{Perplexity of LLaMa-base MoE 520M model pre-trained on FineWeb for 336k steps. Lower is better.}
\label{tab:moe2}
\begin{tabular}{lc}
\toprule
\textbf{Optimizer} & \textbf{Perplexity $\downarrow$} \\
\midrule
\algname{AdamW} & \textit{18.68} \\
\algname{M-SignSGD} & 18.87 \\
\bottomrule
\end{tabular}
\end{table}

We would like to highlight that \algname{M-SignSGD} scales remarkably well with increasing model size, outperforming \algname{M-NSGD} and \algname{M-ClippedSignSGD}. 
Additionally, we encountered difficulties running \algname{M-ClippedSGD} in this setting. 
Consequently, we decided to include a clipped version of \algname{M-SignSGD}, which aligns with our approach since we consider only an EMA of momentum in the update.

\subsection{Robustness with respect to random seed}

To verify the robustness of our approach, we repeated the experiment from~\Cref{tab:pre-training} with three different random seeds. As shown in~\Cref{tab:3seed}, the performance remains highly consistent across all seeds, with a standard deviation $\leq 0.03$ for all the methods. 

\begin{table}[H]
    \caption{Comparison of mean and standard deviation of the validation perplexity for various optimization methods for LLaMA 130M model trained on C4.}
    \label{tab:3seed}
 
    \begin{center}
    \begin{tabular}{c|c}
    \toprule
    \textbf{Method} & {\textbf{Perplexity $\downarrow$}} \\
    \midrule
    Model size & 130M\\
    \midrule
    \algname{M-SignSGD} & \textbf{18.37}$_{\pm .01}$ \\
    \midrule
    \algname{M-NSGD} &  19.28$_{\pm .03}$ \\
    \algname{M-ClippedSGD} & 18.95$_{\pm .03}$ \\
    \algname{AdamW} & 18.67$_{\pm .00}$ \\
    \bottomrule
    \end{tabular}
    \end{center}
    \vspace{-1.em}
\end{table}

\section{Experimental details}\label{sec: exps details}

\subsection{Hyperparameters sweep}\label{app:pre-training}

We adopted a LLaMA-based architecture~\citep{llama} with RMSNorm~\citep{rmsnorm} and SwiGLU~\citep{shazeer2020glu} activations on the C4 dataset~\citep{c4}. 
Following~\cite{relora}, we used a batch size of 512 sequences and a sequence length of 256. 
We used a T5 tokenizer, since it was also trained on C4 with dictionary size equal to 32k.
We trained the model for 100k steps.

For all experiments, while the main model parameters use the respective optimization method, the LM head layer is optimized with AdamW~\citep{loshchilov2017decoupled}. 
This follows prior work~\cite{anything_but_sgd} which demonstrated that the LM head layer requires more fine-grained effective learning rate adaptation across different tokens for optimal performance.
We used the Nesterov acceleration scheme with a momentum value of 0.9 for all methods except AdamW.
For AdamW, we used standard hyperparameters: $\beta_1 = 0.9, \beta_2 = 0.999, \varepsilon=$1e-8. 

We selected the learning rate through a grid search with multiplicative step of $10^{\frac{1}{4}}$ (LM head layer optimized with AdamW and learning rate equal to 1e-3).
We used a cosine learning rate schedule with a warmup of 10\% of the total number of steps and decay of the final learning rate down to 10\% of the peak learning rate.
% For all methods except \algname{M-NSGD} we used gradient clipping with a threshold of 1.0.
In addition, we selected the best weight decay value between [0, 0.01, 0.1].

The final best hyperparameters are shown in~\Cref{tab:hyperparams}.

\renewcommand{\arraystretch}{1.1}

\begin{table}[h!]
    \caption{LLaMA 130m pre-raining hyperparameters.}
    \label{tab:hyperparams}
    % \vspace{-3mm}
    \begin{center}
        \begin{tabular}{|l|c|c|c|c|}
        \hline
        \textbf{Method} & \algname{M-ClippedSGD} & \algname{M-NSGD} & \algname{M-SignSGD} & \algname{AdamW} \\
        \hline
        \textbf{Learning rate} & $10^{1.5}$ & $10^{0}$ & $10^{-2.75}$ & $10^{-3}$\\
        \hline
        \textbf{Gradient clipping} & $0.03125$ & - & -  & 1.0 \\
        \hline
        \textbf{Weight decay} & $0$ & $0$ & $0.01$ & $0.01$\\
        \hline
        \end{tabular}
    \end{center}
    % \vspace{-1.em}
\end{table}

\subsection{Computational Resources}\label{app:compute}

We conducted all experiments described in~\Cref{sec:experiments,sec: moe exps} using NVIDIA A100 GPUs. We utilized 8 GPUs (full node) with \texttt{torch.nn.parallel.DistributedDataParallel} for most of the runs. A complete run for the 130M model (100k steps) took 6 hours, whereas each run for 1.3B model (300k steps) lasted for approximately 2 days.

%\subsection{RoBERTa large fine-tuning}\label{app:fine-tuning}

%For these experiments, we follow~\cite{setup_finetuning} for the prompt-based fine-tuning paradigm for masked language models and reuse training hyperparameters from~\cite{mezo}. Please refer to the original papers for more details. We compare methods in few-shot scenario with $k=16$ examples.

%For \algname{CompSGD}~\Cref{alg:comp-sinSGD}, we sampled $\mathbf{e}^k$ from scaled Euclidian sphere, i.e. $\alpha\cdot S_2^d = \{\mathbf{e}| \|\mathbf{e}\|_2 = \alpha\}.$ 
%We set $\alpha$ equal to $17$ for all datasets and selected the learning rate in [0.3, 1.0, 3.0] based on validation score.

\section{\algname{minibatch-SignSGD} for distributed optimization}\label{sec: distributed}
Consider distributed optimization with one server and $M$ workers, each of which calculates its own gradient estimate. The server receives all estimates, aggregates them, and sends back the updated solution to the workers. Sign-based methods are so effective in terms of communication \cite{bernstein2018majorityvote,jin2020stochastic}, as sending a sign vector costs only $O(d)$ operations. We use aggregation based on the majority voting. 
\begin{algorithm}[ht!]
\caption{\algname{Distributed-MajorityVote-SignSGD} }
\label{alg:majorityvotesignSGD}   
\begin{algorithmic}[1]
\REQUIRE Starting point $x^1 \in \R^d$, number of iterations $T$, stepsizes  $\{\gamma_k\}_{k=1}^{T}$, batchsizes $\{B_k\}_{k=1}^{T}$.

\FOR{$k=1,\ldots, T$}
\STATE Sample $\{\xi^{k,j}_i\}_{i=1}^{B_k}$ and compute gradient estimate $g^{k,j} = \sum_{i = 1}^{B_k} \nabla \nicefrac{f(x^k, \xi^{k,j}_i )}{B_k}$ for each worker $j \in \overline{1,M}$;
\STATE Send signs $\sign(g^{k,j})$ to server for each worker $j \in \overline{1,M}$;
\STATE Compute on server $g^k =  \sign\left(\sum_{j=1}^M \sign(g^{k,j})\right);$
\STATE Send point $x^{k+1} = x^k - \gamma_k \cdot g^k$ to each worker;
\ENDFOR
\ENSURE uniformly random point from $\{x^1, \dots, x^{T}\}$ . 
\end{algorithmic}
\end{algorithm}

\begin{theorem}[\textbf{HP complexity for \algname{Distributed-MajorityVote-SignSGD}}]\label{thm:dist signsgd conv}
Consider lower-bounded $(L_0,L_1)$-smooth function $f$ (As. \ref{as: bounded}, \ref{as: smooth}) and HT gradient estimates $\kappa \in (1,2]$ (As. \ref{as: pBCM}). Then Alg. \ref{alg:majorityvotesignSGD} with $M$ workers requires the sample complexity $N_M$ per worker  to achieve $\frac{1}{T} \sum_{k=1}^{T}  \|\nabla f(x^k)\|_1 \leq \varepsilon$ with probability at least $1-\delta$ for:

\textbf{Optimal tuning:}  $T = O\left(\frac{\Delta L_1^\delta d^\frac{3}{2} }{\varepsilon}\right), \gamma_k \equiv \frac{1}{48 L_1^\delta d^\frac32} , B_k \equiv  \left(\frac{16\|\Vec{\sigma}\|_1}{\sqrt{M}\varepsilon}\right)^\frac{\kappa}{\kappa-1}$ for $\varepsilon \geq \frac{8L_0}{L_1\sqrt{d}}$ and $T = O\left(\frac{L_0^\delta d }{\varepsilon^2}\right), \gamma_k \equiv \sqrt{\frac{\Delta}{20 L_0^\delta dT}} , B_k \equiv  \left(\frac{16\|\Vec{\sigma}\|_1}{\sqrt{M}\varepsilon}\right)^\frac{\kappa}{\kappa-1} $ for $\varepsilon \leq \frac{8L_0}{L_1\sqrt{d}}$:
\begin{equation}
   N_M = O\left(\left(\frac{\Delta L_0 d }{\varepsilon^2} + \frac{\Delta L_1   d^\frac{3}{2}}{\varepsilon}\right)\left[1 +  \left(\frac{\|\Vec{\sigma}\|_1}{\sqrt{M}\varepsilon}\right)^\frac{\kappa}{\kappa-1}\right]\log \nicefrac{1}{\delta}\right), \label{eq: sign SGD optimal dist} 
\end{equation}
%\textbf{Arbitrary tuning $\varepsilon \geq \frac{8L_0}{L_1\sqrt{d}}$:}  $T, \gamma_k \equiv \gamma_0 \leq \frac{1}{48L_1^\delta d^\frac32 },  B_k \equiv \max \{1, B_0T^2\}$:
%%%\begin{equation}
%%    N = O\left(B_0\left(\frac{\Delta} {\varepsilon\gamma_0} \right)^3 + \frac{1}{B_0^2}\left(\frac{\|\Vec{\sigma}\|_1}{\varepsilon}\right)^\frac{3\kappa}{2(\kappa-1)}\right), \label{eq: sign SGD arb L1}
%\end{equation}
%%\textbf{Arbitrary tuning $\varepsilon \leq \frac{8L_0}{L_1\sqrt{d}}$:}  $T, \gamma_k \equiv \frac{\gamma_0}{\sqrt{T}}, B_k \equiv \max \{1, B_0T\}$:
%\begin{equation}
%    N = O\left(\frac{B_0(\nicefrac{\Delta}{\gamma_0} + L_0^\delta d \gamma_0)^4} {\varepsilon^4} + \frac{1}{B_0}\left(\frac{\|\Vec{\sigma}\|_1}{\varepsilon}\right)^\frac{2\kappa}{\kappa-1}\right), \label{eq: sign SGD arb}
%%\end{equation} 
\textbf{Arbitrary tuning:\footnote{These bounds are proved for a metric $\min_{k \in \overline{1,T}}\|\nabla f(x_k)\|_1  \leq \varepsilon$.}}  Until plateau $\gamma_k = \gamma_0 \leq \frac{1}{48L^\delta_1d^\frac32}, B_k = B_0k^2$, after $\gamma_k = \frac{\gamma_0}{\sqrt{k}}, B_k = B_0k$:
\begin{eqnarray}
    \varepsilon \geq \frac{8L_0}{L_1\sqrt{d}} &\Rightarrow& N_M = \tilde{O}\left( B_0\left(\frac{\Delta}{\gamma_0\varepsilon} 
 \right)^3 + \frac{1}{B_0^2}\left(\frac{\|\Vec{\sigma}\|_1}{\sqrt{M} \varepsilon}\right)^\frac{3\kappa}{2(\kappa - 1)}\right), \notag \\
    \varepsilon \ll \frac{8L_0}{L_1\sqrt{d}} &\Rightarrow& N_M =  \tilde{O}\left(\frac{B_0(L_0^\delta \gamma_0 d + \Delta/\gamma_0)^4}{\varepsilon^4}  +  \frac{1}{B_0}\left(\frac{\|\Vec{\sigma}\|_1}{\sqrt{M}\varepsilon}\right)^\frac{2\kappa}{\kappa - 1} \right), \notag
\end{eqnarray} 
where $\Delta = f(x^1) - f^*, L_0^\delta = L_0 \log(\nicefrac{1}{\delta}), L_1^\delta = L_1 \log(\nicefrac{1}{\delta}).$
\end{theorem}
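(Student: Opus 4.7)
The plan is to combine the per-worker batching analysis of Theorem~\ref{thm:minibatch SignSGD} with a cross-worker majority-vote concentration argument, so that the effective noise reduction gets promoted from the pure batching rate $\|\Vec{\sigma}\|_1/B_k^{(\kappa-1)/\kappa}$ to the distributed rate $\|\Vec{\sigma}\|_1/(\sqrt{M}\,B_k^{(\kappa-1)/\kappa})$. The whole argument is structured as: distributed convergence lemma $\to$ two-level noise bound $\to$ mechanical replay of the stepsize/batchsize derivations from Theorem~\ref{thm:minibatch SignSGD}.

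First, I would replay the proof of Lemma~\ref{lem: signsgd T update} verbatim up to inequality~\eqref{eq: line with prob sign}. The update $g^k = \sign(\sum_{l=1}^M \sign(g^{k,l}))$ is still a vector in $\{-1,+1\}^d$, so the $(L_0,L_1)$-smoothness step, the Measure Concentration Lemma~\ref{lem: bernstein ineq} application to the martingale differences $D_k$, and the stepsize restriction $\gamma_k \leq 1/(48 L_1 d^{3/2}\log\tfrac{1}{\delta})$ are unchanged. The only new ingredient is the per-coordinate sign-error term
\[
A_k := \sum_{j=1}^{d} 2\,|\nabla f(x^k)_j|\cdot \mathbb{P}\!\left[\sign(\nabla f(x^k))_j \neq g^k_j \,\big|\, x^k\right].
\]

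Second, I would bound $A_k$ by a two-level argument. By Lemma~\ref{lem: batching p}, each worker's batched estimate $g^{k,l}$ has per-coordinate $\kappa$-th moment $\tilde{\sigma}_j \leq 2^{1/\kappa}\sigma_j/B_k^{(\kappa-1)/\kappa}$, so Markov and Jensen (exactly as in~\eqref{eq:Prob sign not eq simple}) give the per-worker sign-error probability $q^k_j \leq \tilde{\sigma}_j/|\nabla f(x^k)_j|$. Conditional on $x^k$, the $M$ workers' signs are independent Bernoulli$(q^k_j)$; applying the majority-vote Bernoulli inequality from the proof of Theorem~\ref{thm:com-sign conv} (lines~\eqref{eq: prob not equal after bernoulli}--\eqref{eq: S leq bound}) with deviation $\varepsilon_j = 1/2 - q^k_j$ and splitting into two regimes gives: in the signal-dominated regime $|\nabla f(x^k)_j| \geq 4\tilde{\sigma}_j$ we have $\varepsilon_j \geq 1/4$ and the majority-vote bound yields probability $\leq C/\sqrt{M}$; in the noise-dominated regime we invoke $|\nabla f(x^k)_j| \leq 4\tilde{\sigma}_j$ directly. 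Combining as in~\eqref{eq: majority proof P_M bound} yields
\[
A_k \leq \frac{\|\nabla f(x^k)\|_1}{16} + \frac{C\,\|\Vec{\sigma}\|_1}{\sqrt{M}\,B_k^{(\kappa-1)/\kappa}}.
\]

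Third, substituting this bound into the convergence skeleton produces a distributed analogue of Lemma~\ref{lem: signsgd T update} identical in form to~\eqref{eq: signsgd convergence lemma} but with the noise factor $\|\Vec{\sigma}\|_1/B_k^{(\kappa-1)/\kappa}$ replaced by $\|\Vec{\sigma}\|_1/(\sqrt{M}\,B_k^{(\kappa-1)/\kappa})$. From there, the derivations of Theorem~\ref{thm:minibatch SignSGD} (both optimal and arbitrary tuning, in both regimes $\varepsilon \geq 8L_0/(L_1\sqrt{d})$ and $\varepsilon \leq 8L_0/(L_1\sqrt{d})$) apply verbatim under the substitution $\|\Vec{\sigma}\|_1 \mapsto \|\Vec{\sigma}\|_1/\sqrt{M}$; this gives precisely the per-worker complexities stated in the theorem.

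The main obstacle is the non-symmetric Bernoulli majority-vote bound in Step~2. Unlike the unimodal symmetric setting of Theorem~\ref{thm:com-sign conv}, we cannot invoke Gauss's inequality to enforce $q^k_j < 1/2$ in the low-signal regime, so a careless application of the $1/\sqrt{M}$ concentration bound fails there. The remedy is the regime split around $|\nabla f(x^k)_j| \sim 4\tilde{\sigma}_j$, combined with the direct estimate $|\nabla f(x^k)_j| \leq 4\tilde{\sigma}_j$ in the problematic regime, leveraging the fact that per-worker batching has already driven $\tilde{\sigma}_j$ small enough that its contribution can be reabsorbed into the $\|\Vec{\sigma}\|_1/\sqrt{M}$ term after summation in $\ell_1$.
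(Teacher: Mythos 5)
Your overall architecture (replay Lemma~\ref{lem: signsgd T update} up to the sign-error term, bound that term by a two-level batching-plus-majority-vote argument, then rerun the tuning calculations of Theorem~\ref{thm:minibatch SignSGD} under the substitution $\|\Vec{\sigma}\|_1 \mapsto \|\Vec{\sigma}\|_1/\sqrt{M}$) matches the paper's intent, and you correctly isolate the real difficulty: without a symmetry assumption the per-worker sign-failure probability $q^k_j$ need not lie below $1/2$ when $|\nabla f(x^k)_j|$ is small, so the Bernoulli bound \eqref{eq: prob not equal after bernoulli} is unusable there. However, your remedy for that regime does not close the gap. In the noise-dominated regime you bound the contribution by $2|\nabla f(x^k)_j| \le 8\tilde{\sigma}_j$, which after summing over $j$ gives a term of order $\|\Vec{\sigma}\|_1/B_k^{(\kappa-1)/\kappa}$ with \emph{no} $1/\sqrt{M}$ factor: a skewed zero-mean noise can make every worker's batched sign wrong with probability arbitrarily close to one on a low-signal coordinate, so majority voting genuinely provides no reduction there. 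Plugging the theorem's prescribed batch size $B_k = (16\|\Vec{\sigma}\|_1/(\sqrt{M}\varepsilon))^{\kappa/(\kappa-1)}$ into this residual term yields $\|\Vec{\sigma}\|_1/B_k^{(\kappa-1)/\kappa} = \sqrt{M}\,\varepsilon/16$, which is not $O(\varepsilon)$; equivalently, killing this term forces $B_k = (\|\Vec{\sigma}\|_1/\varepsilon)^{\kappa/(\kappa-1)}$ per worker, erasing exactly the $\sqrt{M}$ savings the theorem claims. So your asserted bound $A_k \le \|\nabla f(x^k)\|_1/16 + C\|\Vec{\sigma}\|_1/(\sqrt{M}\,B_k^{(\kappa-1)/\kappa})$ does not follow, and the "reabsorption" step in your final paragraph is circular.

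The paper takes a different route: it reduces the entire theorem to the \algname{MajorityVote-SignSGD} Convergence Lemma~\ref{lem: majority signsgd T update}, whose proof invokes Gauss's inequality (Lemma~\ref{lem: gauss ineq}) for unimodal symmetric noise to guarantee a quantitative margin $\tfrac12 - q_j \gtrsim S_j$ even in the low-signal regime; it is precisely that margin, fed into \eqref{eq: prob not equal after bernoulli}, that produces the $\sigma_j/(\sqrt{M}\,|\nabla f(x^k)_j|)$ bound \eqref{eq: majority proof P_M bound} on the problematic coordinates, after which the substitution $\|\Vec{\sigma}\|_1 \mapsto \|\Vec{\sigma}\|_1/\sqrt{M}$ in Theorem~\ref{thm:minibatch SignSGD} is mechanical. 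To prove the stated result you therefore need a symmetry or median-unbiasedness hypothesis on the workers' batched estimates (as the paper's cited lemma does), or a different aggregation rule; the regime split alone cannot manufacture the $1/\sqrt{M}$ where the signal sits below the per-worker noise level. Your Step 1 and the signal-dominated half of Step 2 are fine (though there the sharper estimate $\mathbb{P}[\text{fail}] \le 16 q_j/M \le 16\tilde{\sigma}_j/(M|\nabla f(x^k)_j|)$ is what you actually want, since a bare $C/\sqrt{M}$ multiplied by $|\nabla f(x^k)_j|$ only gets absorbed into the gradient term for $M$ above a constant).
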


\begin{proof}[Proof of Theorem \ref{thm:dist signsgd conv}]
    This proof completely copies the proof of \algname{minibatch-SignSGD} Complexity Theorem \ref{thm:minibatch SignSGD} from Appendix \ref{subsec: minibatch signsgd proof} with substitution of $\|\Vec{\sigma}\|_1$ with $\frac{\|\Vec{\sigma}\|_1}{\sqrt{M}}$. Such substitution is justified by \algname{MajorityVote-SignSGD} Convergence Lemma \ref{lem: majority signsgd T update} which tells that noise level drops as $\sqrt{M}$ with the growth of worker number $M$. The condition $M \geq 160/\kappa^2$ is satisfied for $\kappa > 1$ automatically after the fixed number of training steps in the beginning.    
\end{proof}

%\begin{remark}[\textbf{\algname{SignSGD} with median clipping}]
%    For symmetric HT noises with a mild condition on probability density function, there exists a complement to the batch averaging, namely, coordinate-wise median operator \cite{puchkin2024breaking}. For all $\kappa \in (0,2]$, it requires only $2/\kappa + 1$ samples to build an unbiased BV gradient estimate. Then it can be combined with \algname{minibatch-SignSGD} as if $\kappa = 2$.  In this case, \algname{SignSGD} with median clipping can achieve the bounds of Theorem \ref{thm:com-sign conv}.  
%\end{remark}

\end{document}